\numberwithin{equation}{section}
\newtheorem{theorem}{Theorem}[section]
\newtheorem{lemma}[theorem]{Lemma}
\newtheorem{proposition}[theorem]{Proposition}
\theoremstyle{definition}
\newtheorem{definition}[theorem]{Definition}
\newtheorem{example}[theorem]{Example}
\newtheorem{remark}[theorem]{Remark}
\newcommand{\A}{\alpha}
\newcommand{\C}{\mathbb{C}}
\newcommand{\R}{\mathbb R}
\newcommand{\F}{\mathcal F}
\newcommand{\Or}{\mathfrak{O}}
\newcommand{\Q}{\mathbb Q}
\newcommand{\Z}{\mathbb Z}
\newcommand{\N}{\mathbb N}
\newcommand{\p}{\mathbb P}
\newcommand{\xto}{\xrightarrow}
\newcommand{\rank}{\text{rank}}
\def\url#1{\noindent \sf{#1}}
\begin{document}

\title{\bf The Monodromy problem for hyperelliptic curves}
\author{Daniel L\'opez Garcia}
\date{}
\maketitle
\begin{abstract} 
	We study the Dynkin diagram  associated to the monodromy of direct sums of polynomials. The \textit{monodromy problem} asks under which   conditions on a polynomial,  the monodromy of a vanishing cycle generates the whole homology of a regular fiber. We consider the case $y^4+g(x)$, which is a generalization of the results of Christopher and Marde$\check{s}$i\'c about the monodromy problem for hyperelliptic curves. Moreover, We solve the monodromy problem for direct sums of  fourth degree   polynomials.
\end{abstract}

%\tableofcontents

%%%%%%%%%%%%%%%%%%%%%%%%%%%%%%%%%%%%%%
%%%%%%%%%%%%%%%%%%%%%%%%%%%%%%%%%%%%%%

\section{Introduction}
%%History and motivation 
The interest on  polynomial foliations in $\C^2$ arises as an approach to the Hilbert Sixteen Problem \cite{hosseinabelian, Ro}. These foliations are given by 1-forms $\omega=P(x,y)dy-Q(x,y)dx$, where $P$ and $Q$ are polynomials. The classical notation for the foliation associated to the form $\omega$ is  $\F(\omega)$. In this context, a point $p\in \C^2$  is a singularity of $\F(\omega)$ if $P(p)=Q(p)=0$.   We say that the singularity $p$ is a \textbf{center singularity} if there is a local chart  such that $p$ is mapped to $0\in \C^2$,  and a  non-degenerated function $f:(\C^2,0)\to \C$ with fibers tangent to  the leaves of $\F(\omega)$. The degree of a foliation $\F(\omega)$ is  the greatest degree of the polynomials $P$ and $Q$, and the space of foliations  of degree  $d$ is  denoted by $\F(d)$. The closure of the set of foliations in $\F(d)$   with at least one center  is denote by $\mathcal{Mu}(d)$.

It is known that   $\mathcal{Mu}(d)$ is an algebraic subset of $\F(d)$ (e.g. \cite[\S 6.1]{Netocomponentesbook} \cite{hosseinabelian}). The problem of describing its irreducible components is formulated by Lins Neto \cite{netoFwithcenter}. In \cite{Il},  Y. Ilyashenko  proves that the space of Hamiltonian foliation $\F(df)$, where $f$ is a  polynomial of degree $d+1$, is an irreducible component of $\mathcal{Mu}(d)$. In   \cite{hosseincenterlog}, H. Movasati considers the logarithmic foliations $\F\left(\sum_{i=1}^s\lambda_i\frac{df_i}{f_i}\right)$, with $f_i\in \C[x,y]_{\leq d_i}$ and $\lambda_i\in \C^*$. He  proves that the set of logarithmic foliations  form an irreducible component of $\mathcal{Mu}(d)$, where  $d=\sum_{i=1}^s d_i-1$. Moreover, in  \cite{yadollahcenter}, Y. Zare works with pullback foliations $\F(P^*\omega)$, where $P=(R,S):\C^2\to \C^2$ is a generic morphism with $R,S\in \C[x,y]_{\leq d_1}$, and $\omega$ is a 1-form of degree $d_2$. Zare shows that they form an  irreducible component of $\mathcal{Mu}(d_1( d_2+1)-1)$.% AGRADECER CARA DE MATHOVERFLOW.

The main idea in the proofs of these assertions is to choose a particular polynomial  $f$ and   consider deformations  $df+\varepsilon \omega_1$ in $\mathcal{Mu}(d)$.  Then, it is necessary to study the vanishing of the abelian integrals $\int_{\delta} \omega_1$, where $\delta$ is a homological 1-cycle in a regular fiber of $f$. This integral is zero on the vanishing cycle associated to the center singularity. If the monodromy action on this cycle generates the whole vector space $H_1(f^{-1}(b),\Q)$ for a regular value $b$, then the deformation is relatively exact to $df$. 

The condition that the vanishing of the integral $\int_{\delta} \omega_1$  implies that $\omega_1$ is relatively exact to $df$, is known as the (*)-\textit{property} (It was introduced  by J.P. Fran\c{c}oise in \cite{Fr}). The results of L. Gavrilov in \cite{gavrilovpetrov}, show that if we provided that the integral is zero over any cycle in a regular fiber, then $\omega_1$ is relatively exact. Therefore, if the subspace generated by monodromy action on the vanishing cycle $\delta$ is the whole space $H_1(f^{-1}(b), \Q)$, then the $(*)-property$ is satisfied.  This gives rise to the next natural problem, which is summarized by C. Christopher and P. Marde$\check{s}$i\'c   in \cite{ChMonodromy}  as follows.\\

\textbf{Monodromy problem}. Under which conditions on $f$ is the $\Q$-subspace of $H_1(f^{-1}(b), \Q)$ generated by the images of a vanishing cycle of a Morse point under monodromy equal to the whole of $H_1(f^{-1}(b), \Q)$?\\

Furthermore, they  show a characterization of the   vanishing cycles associated to a Morse point  in hyperelliptic curves given by $y^2+g(x)$, depending on whether  $g$ is decomposable (Theorem \ref{ChThm}). This case is closely related with the 0-dimensional monodromy problem; by using the definition of Abelian integrals of dimension zero in \cite{GavrilovMovasati}. For example, if we think in the Dynkin diagram associated with $y^2+g(x)$ and the one associated with $g(x)$, then  we  see that they coincide. However, the Dynkin diagram for $ y ^ 3 + g (x) $ is a bit more complicated. Moreover, in the case $y^4+g(x)$ there is always a pullback associated to $y\to y^2$, thus the Dynkin is expected to reflect this fact. For these two cases, we prove the following two theorems.
\begin{theorem}
	\label{Main2S3Intro}
	Let $g$ be a polynomial with real critical points,  and degree $d$ such that,  $4\nmid d$ and $d\leq 100$.  Consider the polynomial  $f(x,y)=y^4+g(x)$ , and let $\delta(t)$ be an associated vanishing cycle at a Morse point; then one of the following assertions holds.
	\begin{enumerate}
		\item The monodromy of $\delta(t)$ generates the homology $H_1(f^{-1}(t), \Q)$.
		\item The polynomial $g$ is decomposable (i.e., $g=g_2\circ g_1$), and $\pi_*\delta(t)$ is homotopic to zero in $\{y^4+g_2(z)=t\}$, where $\pi(x,y)=(g_1(x),y)=(z,y)$. Or,
		the cycle $\pi_*\delta(t)$ is homotopic to zero in $\{z^2+g(x)=t\}$, where $\pi(x,y)=(x,y^2)=(x,z)$.
	\end{enumerate}
\end{theorem}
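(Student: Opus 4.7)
The plan is to reduce the two-dimensional monodromy problem for $f(x,y)=y^4+g(x)$ to the one-dimensional (hyperelliptic) and zero-dimensional monodromy problems, exploiting the Sebastiani--Thom style tensor decomposition of vanishing homology for direct sums of polynomials. First I would set up, for a regular value $t$, the identification
\[
H_1(f^{-1}(t),\Q) \;\cong\; \widetilde H_0(\{y^4=s\},\Q)\;\tensor\;\widetilde H_0(\{g(x)=t-s\},\Q)
\]
coming from the join construction, together with the corresponding Picard--Lefschetz basis indexed by the critical points $\mathrm{Crit}(y^4)\times\mathrm{Crit}(g)=\{0\}\times\mathrm{Crit}(g)$. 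Under this identification, the vanishing cycle at a Morse point $(0,p)$ is a pure tensor $\A\tensor\beta_p$, where $\A$ generates $\widetilde H_0(\{y^4=s\},\Q)$ (an $A_3$-lattice) and $\beta_p$ is the zero-dimensional vanishing cycle of $g$ at $p$. The Dynkin diagram of $f$ becomes the product of the $A_3$-diagram of $y^4$ with the Dynkin diagram $\Gamma(g)$ of $g$.

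Next, I would describe the monodromy group $M(f)\subseteq \mathrm{Aut}(H_1(f^{-1}(t),\Q))$ as the group generated by the Picard--Lefschetz reflections along the basis $\{\A_i\tensor\beta_p\}$, where $\{\A_i\}$ are the three standard $A_3$ cycles. Because the projection $\pi_{g_1}(x,y)=(g_1(x),y)$ (when $g=g_2\circ g_1$) and the projection $\pi_2(x,y)=(x,y^2)$ factor $f$ through lower-degree polynomials, each induces a surjection on homology whose kernel is $M(f)$-invariant. The $\pi_2$-kernel is precisely the $(-1)$-eigenspace of the involution $\sigma\colon y\mapsto -y$ on $H_1(f^{-1}(t),\Q)$, which in the tensor picture corresponds to the odd part of the $A_3$ factor. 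Thus if $\delta(t)$ lies in a proper $M(f)$-invariant subspace, the plan is to show that it must be contained in the kernel of one of these two projections, yielding case (2).

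I would then exploit the results of Christopher--Marde\v{s}i\'c (Theorem \ref{ChThm}) to handle the hyperelliptic factor $z^2+g(x)=t$ and the zero-dimensional results of Gavrilov--Movasati to handle $\Gamma(g)$. The dichotomy is: either the zero-dimensional monodromy on $\{g(x)=s\}$ acts transitively enough to force the $\A\tensor\beta_p$ orbit to fill the whole tensor product, or $g$ is decomposable with $\beta_p$ lying in the kernel of $(g_1)_*$. Combined with the extra freedom provided by the $A_3$ symmetry generated by $y\mapsto iy$, this reduces the remaining cases to showing that a non-surjective orbit forces $\delta(t)$ into one of the two distinguished kernels.

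The main obstacle is the bookkeeping of monodromy orbits in the tensor product $A_3\tensor H_0^{\mathrm{van}}(g)$ when $\Gamma(g)$ is itself decomposable but neither a pullback nor a pure composition, and the hypothesis $4\nmid d$, $d\leq 100$ is what makes this finite case analysis tractable. Concretely, I expect to need a lemma of the form: if $\beta\in \widetilde H_0(\{g(x)=s\},\Q)$ generates the zero-dimensional homology under the monodromy action up to decomposability of $g$, then $\A\tensor\beta$ generates $H_1(f^{-1}(t),\Q)$ under $M(f)$ unless $\sigma_*\delta(t)=-\delta(t)$, in which case $\pi_{2*}\delta(t)=0$. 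The degree bound reflects the range in which I can verify by a direct, diagram-theoretic check that no exotic invariant subspace appears beyond the two accounted for in case (2); removing the bound would require a uniform combinatorial argument about $A_3\tensor \Gamma(g)$ that I do not yet see.
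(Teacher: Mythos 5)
Your overall strategy matches the paper's: represent $H_1(f^{-1}(t),\Q)$ by join (tensor) cycles, split the failure of surjectivity into a ``vertical'' case (kernel of $(x,y)\mapsto(x,y^2)$) and a ``horizontal'' case (decomposability of $g$, detected via Theorem \ref{ChThm}). But there are two genuine problems. First, your description of the monodromy group as ``generated by the Picard--Lefschetz reflections along the basis $\{\A_i\tensor\beta_p\}$'' is not correct and would break the theorem: the generators of $\pi_1(\C\setminus C)$ correspond to critical \emph{values} of $f$, and each critical value of $y^4+g(x)$ carries three vanishing cycles (the $A_3$ block in the $y$-direction), so the monodromy operator around it is the composition of those three reflections, not the individual ones. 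If one were allowed the individual reflections, connectedness of the Dynkin diagram would force every vanishing cycle to generate the whole homology, so case (2) could never occur. The entire difficulty of the problem lives in the fact that the three reflections over a given critical value cannot be separated.

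Second, the statement you flag as ``a lemma I expect to need'' is precisely the core of the proof, and it is left unproven; the ``direct, diagram-theoretic check'' is never specified, and the hypotheses $4\nmid d$, $d\leq 100$ are not tied to any concrete computation. In the paper this step is carried out in three pieces: Lemma \ref{monodromysubgroup} shows that the Krylov subspace of the one-critical-value monodromy matrix $M_4$ of $y^4+x^d$ is contained in the orbit span for arbitrary $g$ of degree $d$ (because $M_4$ is a product of the monodromy operators of $y^4+g$); Proposition \ref{freevanishingcycles} then diagonalizes $M_4$ numerically and verifies that for $4\nmid d$, $d\leq 100$ its eigenvalues are pairwise distinct, which identifies the Krylov subspace of each basis vector explicitly (this is where the degree bound actually enters); and Proposition \ref{mainPropDynkin} upgrades ``all cycles in the same row'' to the full homology, in the absence of horizontal symmetry, by an explicit computation with two $3\times 3$ matrices $m_1,m_2$ exploiting the existence of two distinct critical values. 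Without supplying these three ingredients (or equivalents), your argument establishes the easy containments --- that the two kernels are invariant and contain the row-$2$ and horizontally symmetric cycles --- but not the converse, which is what the theorem asserts.
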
 
\begin{theorem}
	\label{Main1S3Intro}
	
	Let $g$ be a polynomial with real critical points,  and degree $d$ such that,  $3\nmid d$ and $d\leq 100$.  Consider the polynomial  $f(x,y)=y^3+g(x)$ , and let $\delta(t)$ be an associated vanishing cycle at a Morse point; then one of the following assertions holds.
	\begin{enumerate}
		\item The monodromy of $\delta(t)$ generates the homology $H_1(f^{-1}(t), \Q)$.
		\item The polynomial $g$ is decomposable (i.e., $f=g_2\circ g_1$), and $\pi_*\delta(t)$ is homotopic to zero in $\{y^3+g_2(z)=t\}$, where $\pi(x,y)=(g_1(x),y)=(z,y)$.
	\end{enumerate}
\end{theorem}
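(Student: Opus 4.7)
The plan is to adapt the strategy from Theorem \ref{Main2S3Intro} to the cubic case, exploiting the fact that the polynomial $y^3$ is indecomposable in the sense $y^3 = h_2 \circ h_1$ with $\deg h_i \geq 2$, so that the only source of extra invariant subspaces comes from decompositions of $g$. The starting point is the Thom--Sebastiani decomposition
\[
H_1(f^{-1}(t),\Q) \cong U \otimes W,
\]
where $U = H_0^{\mathrm{red}}(\{y^3=t\},\Q) \cong \Q^2$ carries the Dynkin diagram $A_2$ of $y \mapsto y^3$, and $W = H_0^{\mathrm{red}}(\{g(x)=t\},\Q) \cong \Q^{d-1}$ carries the Dynkin diagram $A_{d-1}$ coming from the assumption that $g$ has real critical points. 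Under this identification, each vanishing cycle of $f$ at a critical value $c_i = g(x_i)$ takes the form $\alpha \otimes \beta_i$ with $\alpha \in U$ a vanishing cycle of $y^3$ and $\beta_i \in W$ the vanishing cycle of $g$ at $x_i$, and the intersection form on $V := U \otimes W$ is the symmetrization of the tensor product of Seifert forms.

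Next, I would identify the monodromy group $\Gamma$ as generated by Picard--Lefschetz transvections along the $2(d-1)$ vanishing cycles, and study the $\Gamma$-orbit of a fixed $\delta = \alpha_0 \otimes \beta_0$. A first step uses that the monodromy around critical values of $g$ alone already spans $\alpha_0 \otimes W$ inside $V$, since it realises the full Weyl group action of $A_{d-1}$ on the $\beta$-factor. What remains is to analyse how transvections at cycles $\alpha' \otimes \beta'$ with $\alpha' \neq \alpha_0$ push $\delta$ out of this subspace; this ultimately reduces the monodromy problem to a question about proper invariant subspaces of $V$ under $\Gamma$.

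The key structural step is to classify such invariant subspaces: to show that any proper $\Gamma$-invariant subspace $K \subsetneq V$ containing a vanishing cycle is of the form $\ker \pi_*$ for a factorization $g = g_2 \circ g_1$ and $\pi(x,y) = (g_1(x),y)$. The condition $3 \nmid d$ is used here to exclude cyclic symmetries of $g$ of order $3$ that could produce invariant subspaces not traceable to a polynomial decomposition. Unlike the quartic case of Theorem \ref{Main2S3Intro}, there is no analogue of the $y^4 = (y^2)^2$ pullback, which simplifies the classification at the cost of leaving only the $g$-decomposition branch in conclusion (2).

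The hardest part, and the reason for the bound $d \leq 100$, is the combinatorial verification that this classification of invariant subspaces is complete. I expect one encodes the tensor Dynkin diagram and the Picard--Lefschetz action explicitly, and then checks, for each admissible $d$, that every $\Gamma$-invariant subspace containing a vanishing cycle arises from a decomposition of $g$. The main obstacle is the scaling of this verification: although the philosophical picture (that only $g$-factorizations produce invariant subspaces) is clear, controlling the combinatorics of the $A_2 \otimes A_{d-1}$ Dynkin diagram uniformly in $d$ likely requires either an inductive argument on $d$ or a symmetry-based case reduction, which is presumably where the upper bound on $d$ enters.
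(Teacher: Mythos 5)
Your overall architecture differs from the paper's, and as written it has two genuine gaps. First, the claim that ``the monodromy around critical values of $g$ alone already spans $\alpha_0\otimes W$ \ldots since it realises the full Weyl group action of $A_{d-1}$ on the $\beta$-factor'' is not correct: for $f=y^3+g(x)$ each critical value $c^g_j$ of $f$ carries \emph{two} vanishing cycles $\gamma_1*\sigma_j$ and $\gamma_2*\sigma_j$ (the whole $A_2$ column), and by Proposition \ref{intersectionformula} these have nonzero intersection, so the Picard--Lefschetz operator around $c^g_j$ is not of the form $(\mathrm{id}_U)\otimes(\text{reflection on }W$); it immediately mixes the $U$-factor. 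The paper never has such a clean factorization available; instead it reduces to the single-critical-value model $y^3+x^d$, computes the Krylov subspace of the explicit monodromy matrix $M_3=I-\Psi_3$ (Proposition \ref{freevanishingcycles}, verified numerically --- this is precisely where $3\nmid d$ and $d\leq 100$ enter, as conditions guaranteeing $2(d-1)$ distinct eigenvalues, not as an exclusion of order-$3$ symmetries of $g$ or as a bound on a case analysis of invariant subspaces), and then transfers to general $g$ via Lemma \ref{monodromysubgroup} and the explicit $2\times 2$ matrix computation of Proposition \ref{CoroDynkin3}.

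Second, your ``key structural step'' --- that every proper invariant subspace containing a vanishing cycle is $\ker\pi_*$ for some factorization $g=g_2\circ g_1$ --- is both unproven in your sketch and stronger than what is needed or established. The nontrivial bridge from a combinatorial degeneracy of the Dynkin diagram (the paper's ``horizontal symmetry,'' i.e.\ a pattern of coincidences among the critical values of $g$) to an actual polynomial decomposition $g=g_2\circ g_1$ is not automatic: the paper obtains it in Proposition \ref{pullbackhorsymm} by invoking the Christopher--Marde\v{s}i\'c theorem (Theorem \ref{ChThm}) for $y^2+g(x)$, together with the observation that horizontal symmetry depends only on $g$ and not on the exponent $e$. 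Without this input (or a substitute for it), your classification step cannot close, so the proposal as it stands does not yield a proof.
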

Some parts in the proof are  done numerically using computer, thus we have the restriction $d\leq 100$ in the degree  of the polynomial $g$.

The monodromy problem for polynomials of degree 4, on the other hand,  is very  interesting, because  the classification of the irreducible components of $\mathcal{Mu}(3)$ is still an open problem. In fact, the only case which has a complete classification is $\mathcal{Mu}(2)$ (see \cite{Dulac}\cite[p. 601]{Netoirreducible}). For polynomials $f(x,y)=h(y)+g(x)$ where $\text{deg}(h)=\text{deg}(g)=4$, we  determine in the Theorem \ref{orbitspacehg}, a relation between the subspaces of $H_1(f^{-1}(b), \Q)$ generated  by the monodromy action of the vanishing cycles, and the property of $f$ being decomposable. In oder to do that, we provide  an explicit description of the space of parameters of the polynomials  $h(y)+g(y)$ which satisfies some conditions in the critical values. \\

\textbf{Organization.} In section 2, we provide some definitions in Picard-Lefschetz theory and describe the Dynkin diagram for direct sum of polynomials in two variable. In section 3, we analyze the particular case, in which there is only one critical value. For this case, we compute the vector space generated by the monodromy action on the vanishing cycles. In section 4, we prove  Theorems \ref{Main2S3Intro} and \ref{Main1S3Intro}. Finally, in Section 5, we solve the monodromy problem for polynomials  $h(x)+g(y)$ with $\text{deg}(h)=\text{deg}(g)=4$, in this case there is another pullback to be considered,  associated to the map $(x,y)\to (xy,x+y)$. For this reason, we do not know a geometrical characterization of some of vanishing cycles which do not generate the whole $H_1(f^{-1}(b), \Q)$.\\

\textbf{Acknowledgements}. I thank Hossein Movasati for suggesting to study the action of monodromy and for introducing me to the center problem. I thank Lubomir Gavrilov for hosting me at the University of Toulouse during a short visit and for his helpful discussions. 

\section{Lefschetz fibrations and monodromy action on direct sum of polynomials }
\label{dynkinrules}
Let $f\in \C[x,y]$ with the set of critical values $C$ and a regular value $b$. Suppose that the origin is an isolated critical point of the highest-grade homogeneous piece of $f$.  Hence, the  Milnor number $\mu$ of $f$ is finite, and  there are \textbf{vanishing cycles} $\delta_1,\delta_2, \ldots, \delta_{\mu}$ associated to the critical values, such that they generate freely the $1$-homology of the fiber $X_b:=f^{-1}(b)$, i.e. $H_{1}(X_b, \Z)=span\{\delta_i\}_{i=1}^{\mu}$ (see   \cite[Chs. 1,2]{singularnold},\cite[\S 7.5] {hodgehossein},\cite{Lamotke}). Moreover, there is an action $\pi_1(\C\setminus C)\times H_1(X_b)\xto{mon}H_1(X_b)$ called the \textbf{monodromy action} given by local trivialization  of  $f^{-1}(\gamma)$, where $\gamma$ is any loop in $\C\setminus C$, see \cite[\S 6.3]{hodgehossein}.  A homological cycle  in $X_b$ such that its orbit by the monodromy action generates the whole homology group $H_1(X_b, \Z)$ is called \textbf{simple cycle}. This definition of simple cycle was introduced in \cite{hosseinabelian}, and it is different from the definition of simple cycle  used in \cite{ChMonodromy,GavrilovMovasati}.

Let  $f(x, y)=h(y)+g(x)$ be a polynomial with real coefficients, such that the critical points of $h$ and $g$ are reals. By considering a deformation  of $f$,  we can suppose that $f$ is a Morse function and  all its critical values are different pairwise.  Furthermore,  by doing a translation  we can suppose that the critical values of $g$ are positive, the critical values of $h$ are negatives, and  $b=0$. Let $c^h_i$ and $c^g_j$ be the critical values of $h$ and $g$, respectively.

Consider the paths $r_i$ and $s_j$,  from $0$ to $c_i^h$ and $c^g_j$, respectively. Furthermore, the paths are without self-intersection,  and they intersect each others only in $0$. Also, $(s_1,s_2,\ldots, s_{d-1}, r_1,r_2,\ldots r_{e-1})$ near $0$ is the anticlockwise direction, as in Figure \ref{setofpaths}. 
 Moreover, we have chosen the enumeration of the paths such that   $0< c^g_1< c^g_2< \cdots< c^g_{d-1}$ and  $c^h_{e-1}> c^h_{e-2}> \cdots > c^h_{1}> 0$.
\begin{figure}[h!]
	\centering
	\begin{tikzpicture}
	\matrix (m) [matrix of math nodes, row sep=0.4em,
	column sep=2em]{
		c^h_{e-1} &c^h_{e-2}&\ldots		&c^h_1&0&c^g_1&c^g_2&\ldots&	c^g_{d-1},\\
	};			
	\path[-stealth]
	(m-1-5) edge [bend left=45] node [above] {$r_{e-1}$}  (m-1-1) 
	edge [bend left=45] node [above] {$r_{e-2}$} (m-1-2)
	edge [bend left=45] node [above] {$r_{1}$}  (m-1-4) 
	
	edge [bend left=45] node [below] {$s_{1}$} (m-1-6)
	edge [bend left=45] node [below] {$s_{2}$} (m-1-7)
	edge [bend left=45] node [below] {$s_{d-1}$} (m-1-9);
	\end{tikzpicture}
		\caption{\footnotesize A distinguished set of paths from $b$ to the critical values of $h$ and $g$}
		\label{setofpaths}
\end{figure}
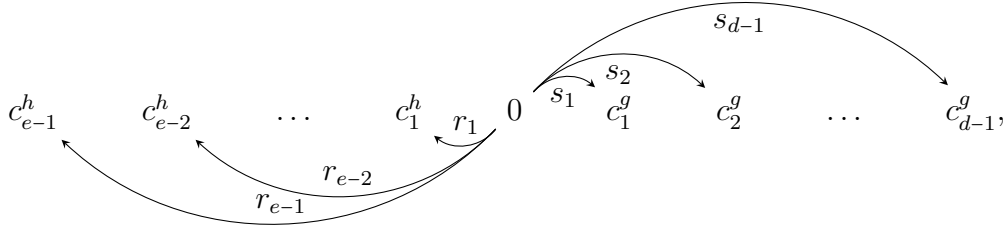  

Let  $C_g=\{(x, g(x))\hspace{2mm}|\hspace{2mm}x\in \R\}$   be the real curve associated to $g$. For each $j=1,\ldots, d-1$, let  $p_j$ be the critical point associated to $c^g_j$, and we define a real number $\varepsilon_j$ as follows. If $p_j$ is a minimum, then we take $c_{j}^g<\varepsilon_j<c_{j+1}^g$, otherwise we take $c_{j-1}^g<\varepsilon_j<c_{j}^g$.
 Consider the real line $L_j=\{(x, \varepsilon_j)\hspace{2mm}|\hspace{2mm}x\in \R\}$, thus there are two points in $L_j\cap C_g$, such that, they go to $(p_j, c^g_j)$ when $\varepsilon_j$ goes to $c^g_j$. Thus, we define the 0-vanishing cycle $\sigma_j\in H_0(g^{-1}(\varepsilon_j), \Z)$, as the formal sum of these two points, with coefficients 1 and -1. Note that these vanishing cycles are \textit{simple cycles} according to the definition in \cite{ChMonodromy,GavrilovMovasati}, however, they are not simple cycles according to our definition.

By taking a simple path from $0$ to $\varepsilon_j$, without encircling or passing through critical values, we can consider $\sigma_j$ in $ H_0(g^{-1}(0), \Z)$. Note, that it is possible  to chose a path from 0 to $\varepsilon_j$ and compose it with the  segment $\overline{\varepsilon_j c_j^g}$, such this path is homotopic equivalent to $s_j$. Therefore,  we can suppose that the cycle $\sigma_j$ vanishes along the path $s_j$. Analogously for the polynomial  $h$, we define  the 0-vanishing cycle $\gamma_{i}\in H_0(h^{-1}(0), \Z)$. 

Consider the path $\lambda=s_jr_i^{-1}$, starting in $c^h_i$ and ending in $c^g_j$, as in \cite[\S 7.9]{hodgehossein}, we define the \textbf{join cycle}
\begin{equation*}
\gamma_i*\sigma_j:=\bigcup_{t\in[0,1]} \gamma_{i }(\lambda_t)\times \sigma_{j}( \lambda_t),
\end{equation*}  
where $\gamma_{i}(\lambda_t)\in  H_0(h^{-1}(\lambda_t))$ and $\sigma_{j }(\lambda_t)\in H_0(g^{-1}(\lambda_t))$. The join cycle $\gamma_i*\sigma_j$ is homeomorphic to a circle $S^1$, the Figure \ref{0Joincycle} shows this construction.
\begin{figure}[h!]
	\centering
	\includegraphics[width=8cm]{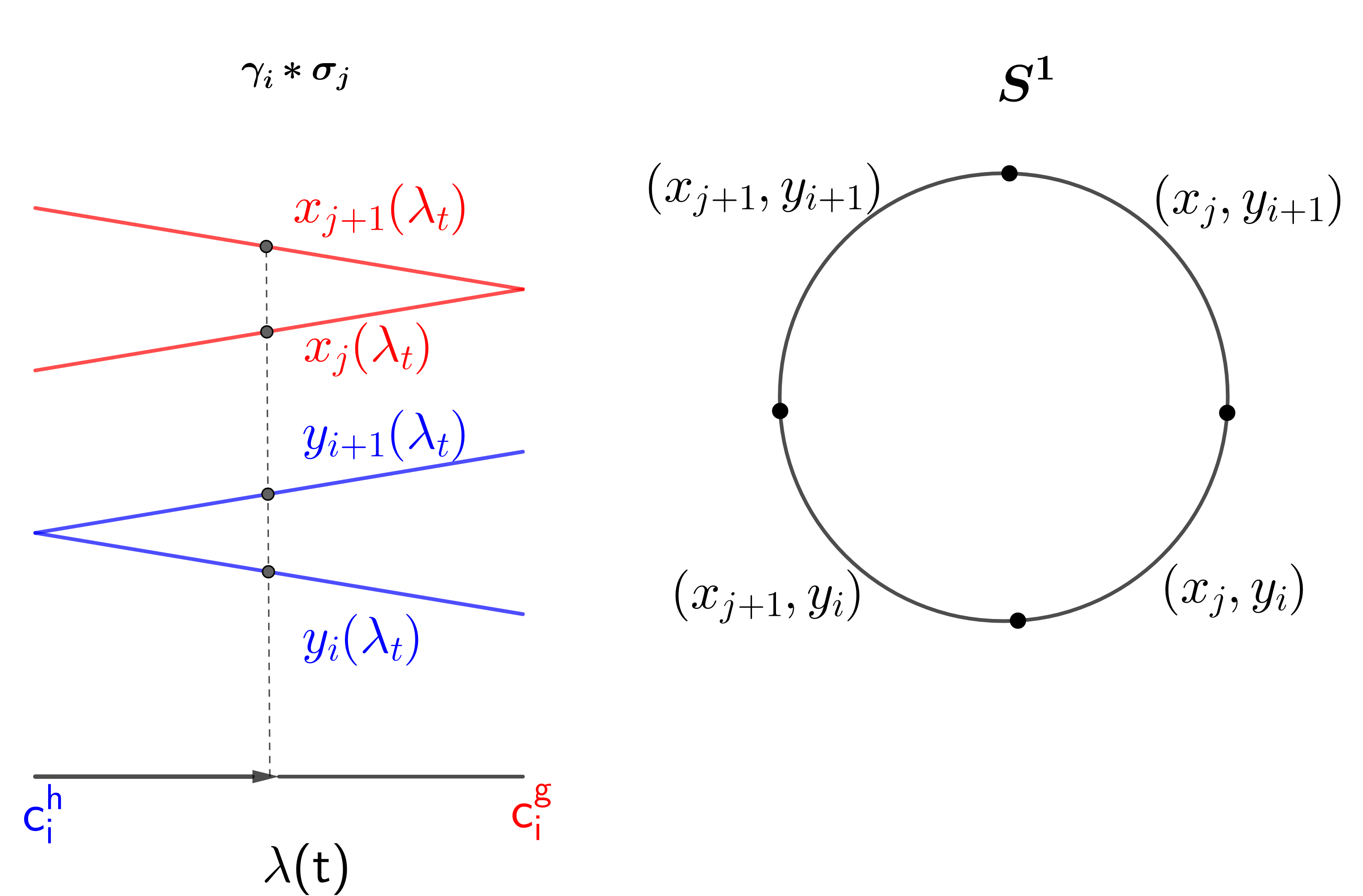}
	\caption{\footnotesize Join cycle $\gamma_j*\sigma_i$ as $S^1$}
	\label{0Joincycle}
\end{figure}
On the other hand, note that the join cycle $\gamma_i*\sigma_j$ is a vanishing cycle of the fibration given by $f$ along the path $s_jr_i^{-1}+c^h_i$. Therefore, the join cycles generate the homology $H_1(f^{-1}(b), \Z)$. Next, we  compute the intersection of two  join cycles. The local  formula for the intersection form of two vanishing cycles is due to A.M. Gabrielov (see \cite[Thm 2.11]{singularnold}). Its reproduction in the global context of tame polynomials is done in \cite[\S 7.10]{hodgehossein}. Since the  particular case of 1-dimension fibers is simple, we reproduce the proof in the next proposition.
\begin{proposition}
	\label{intersectionformula} Let  $\gamma_i*\sigma_j$ and $\gamma_{i'}*\sigma_{j'}$ be two join cycles along the paths  $\lambda:=s_jr_i^{-1}$ and $\lambda':=s_{j'}r_{i'}^{-1}$, respectively. Then
	\begin{equation}
	\label{intersectionofjointcycles}
	\langle \gamma_i*\sigma_j, \gamma_{i'}*\sigma_{j'}\rangle=
	\left\{
	\begin{array}{lcl}
	sgn(j'-j)\langle \sigma_j, \sigma_{j'}\rangle &\text{ if }\hspace{2mm}i=i'\text{ and }j\neq j'\\
	sgn(i'-i)\langle \gamma_i, \gamma_{i'}\rangle &\text{ if }\hspace{2mm}j=j'\text{ and }i\neq i'\\
	sgn(i'-i)\langle \gamma_i, \gamma_{i'}\rangle \langle \sigma_j, \sigma_{j'}\rangle &\hspace{3mm}\text{ if }\hspace{2mm} (i'-i)(j'-j)>0\\
	\hspace{2cm}0 &\hspace{4mm}\text{ if }\hspace{2mm} (i'-i)(j'-j)< 0.
	\end{array}
	\right.
	\end{equation}
\end{proposition}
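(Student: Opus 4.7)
The plan is to realise each join cycle as a topological circle in $X_0=f^{-1}(0)$ fibered, via the natural parametrisation $(x,y)\mapsto g(x)=-h(y)\in\C$, over its underlying path $\lambda=s_jr_i^{-1}$, and to localise the intersection computation to the points where the two underlying paths meet. First I would perturb the distinguished system of paths, preserving the anticlockwise cyclic order $(s_1,\ldots,s_{d-1},r_1,\ldots,r_{e-1})$ at $0$, so that $\lambda$ and $\lambda':=s_{j'}r_{i'}^{-1}$ meet only at $0$ together with any common critical endpoint. At a shared critical endpoint one of the constituent $0$-cycles of a join cycle has collapsed, so no transverse intersection is contributed there; consequently every transverse intersection of $\gamma_i*\sigma_j$ with $\gamma_{i'}*\sigma_{j'}$ lies over $0\in\C$.

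Next I would analyse the fibre over $0$. The cycle $\gamma_i*\sigma_j$ meets it in the finite set $\mathrm{supp}(\sigma_j(0))\times\mathrm{supp}(\gamma_i(0))$, so a common point $(x_0,y_0)$ of the two join cycles must satisfy $x_0\in\mathrm{supp}(\sigma_j)\cap\mathrm{supp}(\sigma_{j'})$ and $y_0\in\mathrm{supp}(\gamma_i)\cap\mathrm{supp}(\gamma_{i'})$. In local coordinates around $(x_0,y_0)$ the function $f=g(x)+h(y)$ is a genuine product, and the local intersection index at $(x_0,y_0)$ factors as
\[
\langle\sigma_j,\sigma_{j'}\rangle_{x_0}\cdot\langle\gamma_i,\gamma_{i'}\rangle_{y_0}\cdot\epsilon(\lambda,\lambda'),
\]
where $\epsilon(\lambda,\lambda')\in\{-1,0,+1\}$ is the sign of the transverse crossing of $\lambda$ and $\lambda'$ at $0$ induced by the path orientations coming from the $S^1$-parametrisation of the join cycles.

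The last step is to unpack $\epsilon(\lambda,\lambda')$ case by case from the cyclic order at $0$. If $i=i'$ the branches $r_i$ of $\lambda$ and $\lambda'$ coincide and only the $s$-branches separate them, whose relative position gives $\mathrm{sgn}(j'-j)$ and yields the first line of \eqref{intersectionofjointcycles}; the case $j=j'$ is symmetric. If both $i\ne i'$ and $j\ne j'$, two comparisons enter simultaneously: the $r$-branch ordering contributes $\mathrm{sgn}(i'-i)$ and the $s$-branch ordering contributes $\mathrm{sgn}(j'-j)$. When these agree, i.e.\ $(i'-i)(j'-j)>0$, the local intersection points of the $4$-strand models of $\gamma_i*\sigma_j$ and $\gamma_{i'}*\sigma_{j'}$ reinforce to give $\mathrm{sgn}(i'-i)\langle\gamma_i,\gamma_{i'}\rangle\langle\sigma_j,\sigma_{j'}\rangle$, whereas when they disagree the intersection points pair off with opposite signs around the $4$-strand circle and cancel, yielding $0$. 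The main obstacle is precisely this sign bookkeeping around the $4$-stranded topological model of the join circle; I would handle it by working in straight-ray local coordinates for $\lambda,\lambda'$ at $0$ and matching the resulting explicit local picture with the Gabrielov formula (cf.\ \cite[Thm.~2.11]{singularnold}, \cite[\S 7.10]{hodgehossein}) in its $1$-dimensional fibre specialisation.
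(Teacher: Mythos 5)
Your overall strategy is the same as the paper's: realise each join cycle as a circle fibred over its path, localise the intersection over the crossing points of the underlying paths, factor the local index into the $0$-cycle pairings times the sign of the path crossing, and read that sign off from the anticlockwise order of the branches at $0$. The generic case $i\neq i'$, $j\neq j'$ and the vanishing case $(i'-i)(j'-j)<0$ do come out this way (for the latter the paper argues more cleanly that non-interleaving branches let one homotope $\lambda$ and $\lambda'$ apart, rather than via your cancellation of signed crossings, but both reach $0$).

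The gap is in the degenerate cases $i=i'$ and $j=j'$. Your localisation claim --- that at a shared critical endpoint no transverse intersection is contributed because one constituent $0$-cycle has collapsed, so every intersection lies over $0$ --- is false, and it is exactly the step your argument leans on there. Test it on $f=y^2+g(x)$ with $\deg g=3$: the join cycles $\gamma_1*\sigma_1$ and $\gamma_1*\sigma_2$ meet in exactly one point, namely the point with $y=0$ where the shared factor $\gamma_1$ has collapsed, i.e.\ over the common endpoint of the two paths and nowhere over the base point. Quantitatively, if everything did sit over $0$, summing your product formula over the common fibre points $\mathrm{supp}(\gamma_i)\times\bigl(\mathrm{supp}(\sigma_j)\cap\mathrm{supp}(\sigma_{j'})\bigr)$ would give $\epsilon\,\langle\gamma_i,\gamma_i\rangle\,\langle\sigma_j,\sigma_{j'}\rangle=\pm 2\,\langle\sigma_j,\sigma_{j'}\rangle$ rather than $\mathrm{sgn}(j'-j)\langle\sigma_j,\sigma_{j'}\rangle$; the self-pairing $\langle\gamma_i,\gamma_i\rangle=\pm 2$ does not drop out on its own, and with the opposite placement of the perturbed branch the base point contributes $0$ instead, so the answer is not even well defined under your scheme. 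The first two lines of \eqref{intersectionofjointcycles} therefore require accounting for the contribution concentrated at the collapsed shared endpoint (this is where Gabrielov's theorem uses the variation/Seifert form rather than a bare product of intersection numbers), or an argument, as the paper implicitly asserts, that after perturbation the two circles meet in a single point of sign $\mathrm{sgn}(j'-j)\langle\sigma_j,\sigma_{j'}\rangle$.
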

\begin{proof}
	Suppose that the paths intersect each other transversally in $b$. The join cycle $\gamma_i*\sigma_j$  intersects  $\gamma_{i'}*\sigma_{j'}$ at one point if the 0-cycles  intersect each other, and at zero points otherwise. The orientation of the intersection of the join cycles is given by  $d\lambda\wedge d\lambda'$ times the sign of the intersection of the 0-cycles. Moreover, we consider as positive orientation the canonical orientation  of $\C$ given by $dz_1\wedge dz_2$ where $z=z_1+\sqrt{-1}dz_2$. 
	
	Suppose that $i=i'$, thus the path $\lambda$ and $\lambda'$ intersect transversally  in positive direction if $j'>j$. The intersection of the 0-cycles only depends on the intersection $\langle \sigma_j, \sigma_{j'}\rangle$. Analogously, for $j=j'$. When $(i'-i)(j'-j)>0$ the intersection of $\lambda$ and $\lambda'$ is transversal again, with positive direction if $i'>i$, and the intersection of the 0-cycles is $\langle \gamma_i,\gamma_{i'}\rangle\langle \sigma_j, \sigma_{j'}\rangle$. Finally, if $(i'-i)(j'-j)<0$, the paths do not intersect transversally. Furthermore, after doing a homotopy we can suppose that the path $\lambda$ and $\lambda'$ do not have intersection points, therefore the intersection of the join cycles is zero.
\end{proof}
 Next,  we present a combinatorial way of representing the intersection form, which is described in \cite[\S 2.8]{singularnold} and \cite[\S 7.10]{hodgehossein}.
\begin{definition}
 The \textbf{Dynkin diagram} of  $f(x,y)=h(y)+g(y)$, is a directed graph where the vertices are the vanishing cycles in a regular fiber. The vertices $v_i$ and $v_j$ are joined by an edge with multiplicity $|\langle v_i, v_j\rangle |$.  If $\langle v_i, v_j\rangle>0$, then the direction goes from $v_i$ to $v_j$. 
\end{definition}
We can also relate  the vertex in a Dynkin diagram with the critical value associated to the vanishing cycle. In order to define the Dynkin diagram of the polynomial $f(x,y)=h(y)+g(x)$, we consider a deformation $\tilde f$  such that the critical values are different pairwise. Although the Dynkin diagram for $f$ and $\tilde f$  are equals (the same vertices and edge), the Dynkin diagram associated to $f$ has relations among the vertices according to the equalities of the critical values.  From the previous  choice of  paths $r_i$ and $s_j$, we have rules in the Dynkin diagram which establish the possibilities to relate the critical values:
\begin{enumerate}
	\item The Dynkin diagram associated to $f(x,y)=h(y)+g(x)$, can be thought as a two-dimensional array, where the rows are the critical values $c_i^h+c_j^g$ for a fixed $i$ and $j=1,\ldots, d-1$. 
	Thus, if two critical values of $f(x,y)$  in the same row of the Dynkin diagram are equals, then for the columns of these critical values there are equalities in the rows. This is obviously because if  $c_i^h+c_j^g=c_i^h+c_l^g$ then $c_j^g=c_l^g$, consequently $c_k^h+c_j^g=c_k^h+c_l^g$ for all $k=1\ldots e-1$. This happens in an analogous way for the columns.
	
	%\item If $c_i^h=c_{i+M}^h$ for some $M>0$, then $c_{i+m}^h=c_i^h$ for $m=1,..., M$. This is consequence of $\text{Im}(c_i^h)\geq \text{Im}(c_{i+m}^h) \geq \text{Im}(c_{i+M}^h) $  and $\text{Re}(c_i^h)\geq \text{Re}(c_{i+m}^h) \geq \text{Re}(c_{i+M}^h) $. Analogously for $c_j^g$.
	
	\item If $c_i^h+c_j^g=c_k^h+c_l^g$ and additionally $i<k$, $j>l$ then $c_i^h=c_k^h$ and $c_j^g=c_l^g$. This follows from the choice of distinguished paths  because $i<k$ implies that $c_i^h \geq c_k^h$ and $j>l$ implies $c_j^h\geq c_l^h $ then $c_i^h+c_j^g\geq c_k^h+c_l^g$ since $c_i^h+c_j^g=c_k^h+c_l^g$ then the inequalities actually are equalities.
\end{enumerate}
Similarly, a Dynkin diagram in dimension 0 consists of vertex which are the vanishing cycles associated to a polynomial, and dashed edges representing an intersection of $-1$, in this case the edges do not have direction. Note that the vanishing cycles associated to the critical values $c^g_i$ with $i\leq [d/2]$ can only intersect  vanishing cycles associated to critical values $c^g_j$ with $j\geq [d/2]$, and similarly for the critical values $c^h_k$, for example, see the figure \ref{realcriticalvalues}.  
\begin{figure}[h!]
	\centering
	\includegraphics[width=0.45\textwidth]{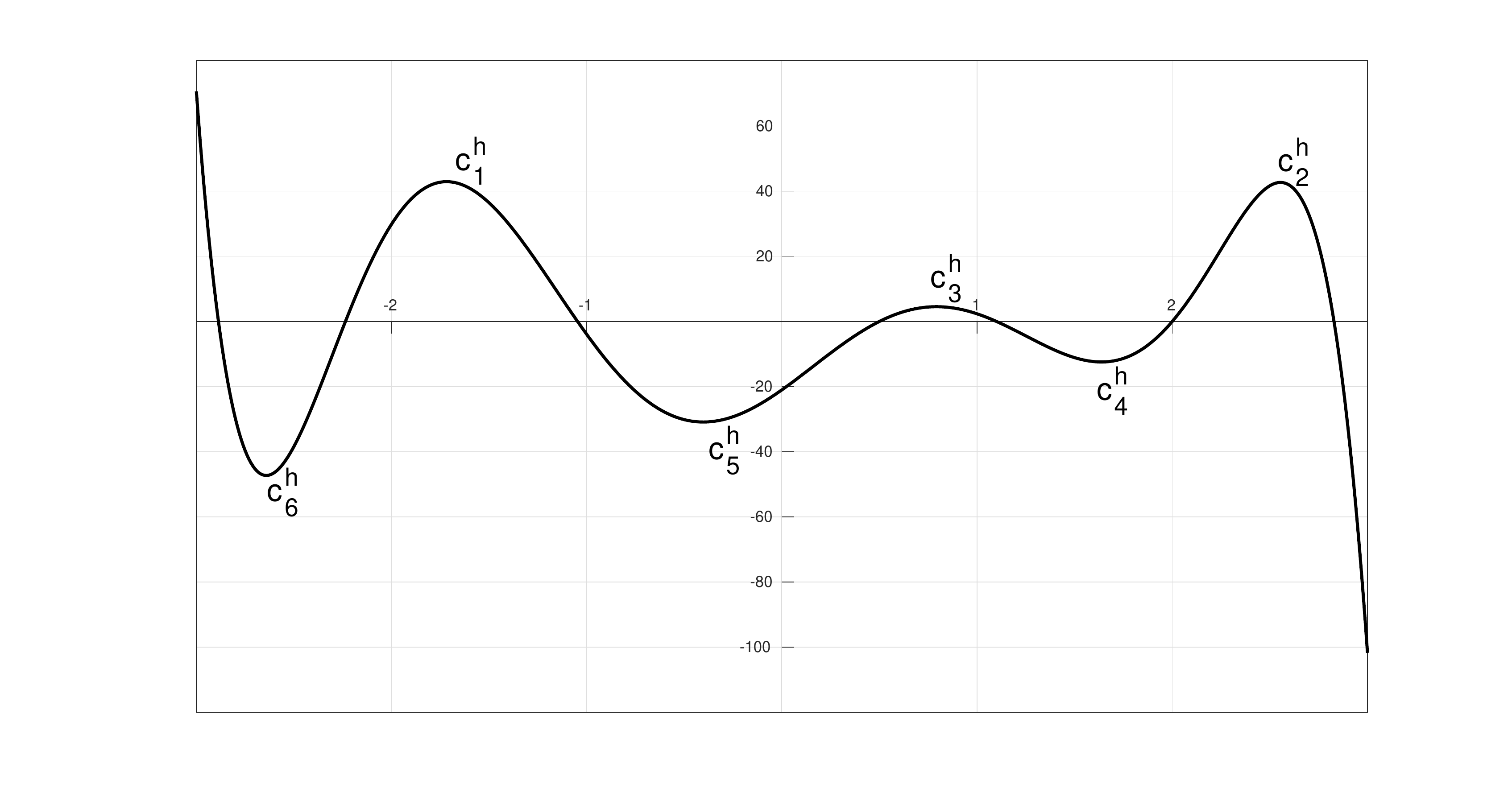}
	\includegraphics[width=0.45\textwidth]{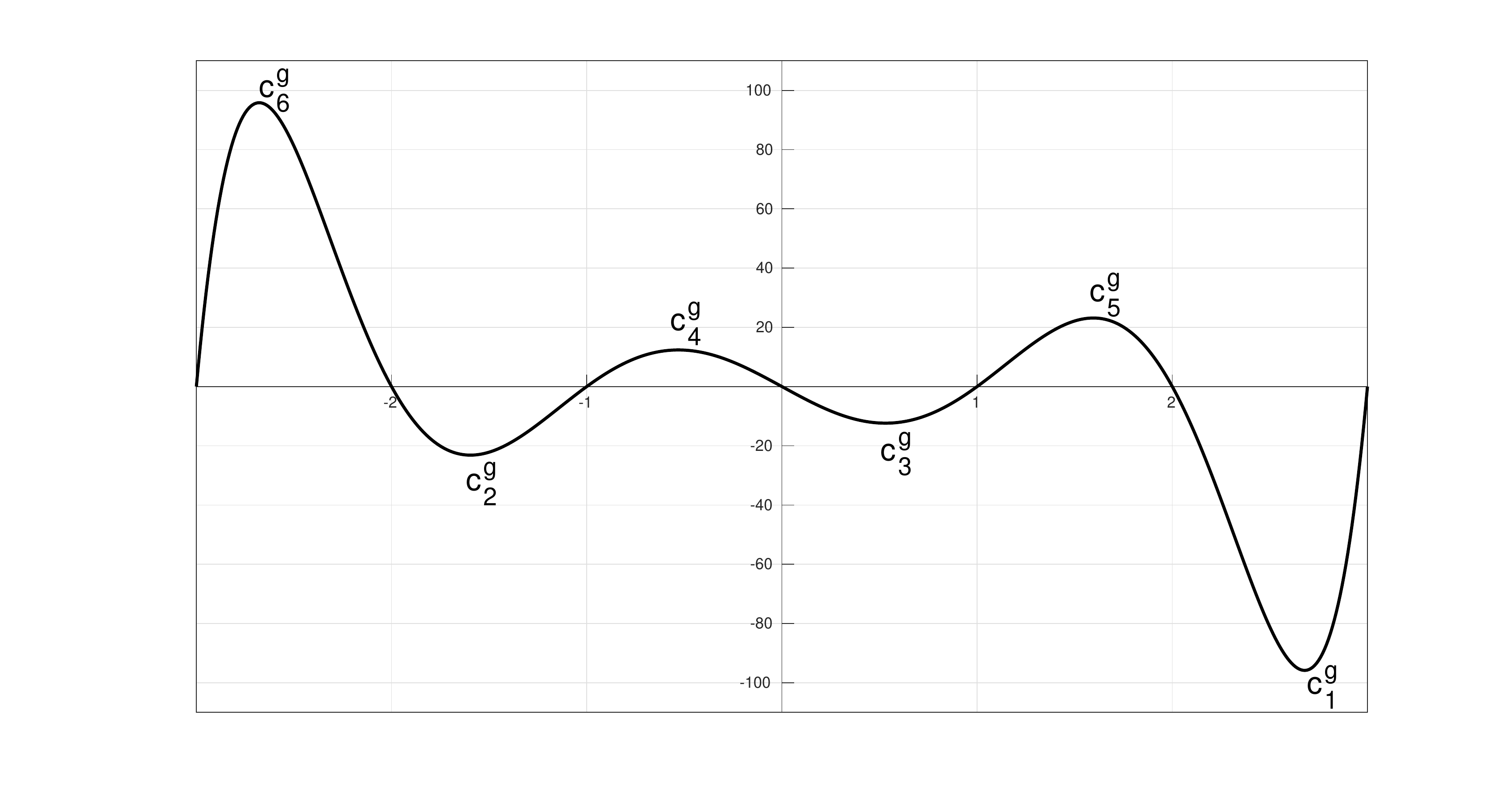}
	\caption{\footnotesize Real part of  the polynomials $h(y)=-(y+\frac{5\sqrt{3}}{3})(y+\sqrt{5})(y+\frac{\pi}{3})(y-\frac{1}{2})(y-\ln(3))(y-2)(y-2\sqrt{2})$ and  $g(x)=(x+3)(x+2)(x+1)x(x-1)(x-2)(x-3)$, with its real critical values. On the left is $h(y)$ and on the right $g(x)$.}
	\label{realcriticalvalues}
\end{figure}

The 0-dimensional Dynkin diagrams associated to $h$ and $g$ of the Figure \ref{realcriticalvalues} are:
\begin{center}
	\begin{tikzpicture}           
	\matrix (m) [matrix of math nodes, row sep=0.7em,
	column sep=0.7em]{
		\gamma_2&\gamma_4 &\gamma_3 & \gamma_5& \gamma_1 & \gamma_6 \\
	};			
	\path[-stealth]
	(m-1-1) edge [-, densely dashed]   (m-1-2)
	(m-1-2)	edge [-, densely dashed]	(m-1-3)
	(m-1-3)	edge [-, densely dashed]	(m-1-4)
	(m-1-4)	edge [-, densely dashed]	(m-1-5)
	(m-1-5)	edge [-, densely dashed]	(m-1-6);
	\end{tikzpicture},
	\begin{tikzpicture}           
	\matrix (m) [matrix of math nodes, row sep=0.5em,
	column sep=0.7em]{
		\sigma_1&\sigma_5 &\sigma_3 & \sigma_4& \sigma_2 & \sigma_6, \\
	};			
	\path[-stealth]
	(m-1-1) edge [-, densely dashed]   (m-1-2)
	(m-1-2)	edge [-, densely dashed]	(m-1-3)
	(m-1-3)	edge [-, densely dashed]	(m-1-4)
	(m-1-4)	edge [-, densely dashed]	(m-1-5)
	(m-1-5)	edge [-, densely dashed]	(m-1-6);
	\end{tikzpicture}
\end{center}
thus, by using   (\ref{intersectionofjointcycles}) we get the next Dynkin diagram for $f(x,y)=h(y)+g(x)$  (in terms of critical values),
\begin{footnotesize}
	\begin{equation}
	\label{Dynkindiagramgenerald}
	\begin{tikzpicture}           
	\matrix (m) [matrix of math nodes, row sep=1em,
	column sep=1em]{
		c^h_2+c^g_1&c^h_4+c^g_1&c^h_3+c^g_1&c^h_5+c^g_1&c^h_1+c^g_1&c^h_6+c^g_1\\
		c^h_2+c^g_5&c^h_4+c^g_5&c^h_3+c^g_5&c^h_5+c^g_5&c^h_1+c^g_5&c^h_6+c^g_5\\
		c^h_2+c^g_3&c^h_4+c^g_3&c^h_3+c^g_3&c^h_5+c^g_3&c^h_1+c^g_3&c^h_6+c^g_3\\
		c^h_2+c^g_4&c^h_4+c^g_4&c^h_3+c^g_4&c^h_5+c^g_4&c^h_1+c^g_4&c^h_6+c^g_4\\
		c^h_2+c^g_2&c^h_4+c^g_2&c^h_3+c^g_2&c^h_5+c^g_2&c^h_1+c^g_2&c^h_6+c^g_2\\
		c^h_2+c^g_6&c^h_4+c^g_6&c^h_3+c^g_6&c^h_5+c^g_6&c^h_1+c^g_6&c^h_6+c^g_6.\\
	};			
	\path[-stealth]
	(m-1-2) edge (m-1-1)
	(m-2-1)	edge (m-1-1)
	(m-1-1) edge (m-2-2)
	(m-2-2) edge (m-2-1)
	(m-2-1) edge (m-3-1)
	(m-4-1)	edge (m-3-1)
	(m-3-2)	edge (m-3-1)
	(m-4-1) edge (m-5-1)
	(m-6-1) edge (m-5-1)
	(m-4-2) edge (m-5-2)
	(m-6-2) edge (m-5-2)
	(m-4-3) edge (m-5-3)
	(m-6-3) edge (m-5-3)
	(m-4-4) edge (m-5-4)
	(m-6-4) edge (m-5-4)
	(m-4-5) edge (m-5-5)
	(m-6-5) edge (m-5-5)
	(m-4-6) edge (m-5-6)
	(m-6-6) edge (m-5-6)
	(m-3-1)	edge (m-2-2)
	(m-3-1)	edge (m-4-2)
	(m-4-2) edge (m-4-1)
	(m-5-2) edge (m-5-1)	
	(m-6-2) edge (m-6-1)
	(m-1-2) edge (m-1-3)
	(m-2-2) edge (m-1-2)
	(m-2-2) edge (m-2-3)
	(m-2-2) edge (m-3-2)
	(m-4-2) edge (m-3-2)
	(m-3-2) edge (m-3-3)
	(m-4-2) edge (m-4-3)
	(m-5-2) edge (m-5-3)
	(m-6-2) edge (m-6-3)
	(m-1-4) edge (m-1-3)
	(m-2-3) edge (m-1-3)
	(m-1-3) edge (m-2-2)
	(m-1-3) edge (m-2-4)
	(m-2-4) edge (m-2-3)
	(m-2-3) edge (m-3-3)
	(m-4-3) edge (m-3-3)
	(m-3-4) edge (m-3-3)
	(m-3-3) edge (m-2-2)
	(m-3-3) edge (m-2-4)
	(m-3-3) edge (m-4-2)
	(m-3-3) edge (m-4-4)
	(m-4-4) edge (m-4-3)
	(m-5-4)	edge (m-5-3)
	(m-6-4)	edge (m-6-3)
	(m-2-4)	edge (m-1-4)
	(m-2-4) edge (m-3-4)
	(m-4-4) edge (m-3-4)
	(m-2-5) edge (m-1-5)
	(m-2-5) edge (m-3-5)
	(m-4-5) edge (m-3-5)
	(m-2-6) edge (m-1-6)
	(m-2-6) edge (m-3-6)
	(m-4-6) edge (m-3-6)
	(m-1-4) edge (m-1-5)
	(m-1-6) edge (m-1-5)
	(m-2-4) edge (m-2-5)
	(m-2-6) edge (m-2-5)
	(m-3-4) edge (m-3-5)
	(m-3-6) edge (m-3-5)
	(m-4-4) edge (m-4-5)
	(m-4-6) edge (m-4-5)
	(m-5-4) edge (m-5-5)
	(m-5-6) edge (m-5-5)
	(m-6-4) edge (m-6-5)
	(m-6-6) edge (m-6-5)
	%diagonales
	(m-1-5) edge (m-2-4)
	(m-1-5) edge (m-2-6)
	(m-3-5) edge (m-2-4)
	(m-3-5) edge (m-2-6)
	(m-3-5) edge (m-4-4)
	(m-3-5) edge (m-4-6)
	(m-5-1) edge (m-4-2)
	(m-5-1) edge (m-6-2)
	(m-5-3) edge (m-4-2)
	(m-5-3) edge (m-6-2)
	(m-5-3) edge (m-4-4)
	(m-5-3) edge (m-6-4)
	(m-5-5) edge (m-4-4)
	(m-5-5) edge (m-6-4)
	(m-5-5) edge (m-4-6)
	(m-5-5) edge (m-6-6)
	;
	\end{tikzpicture}
	\end{equation}
\end{footnotesize}
The \textbf{Picard-Lefschetz formula} give us an explicit computation of the monodromy of a cycle $\delta$, around to a critical value $c_{ij}:=c^h_i+c^g_j$. Namely, it is
\begin{equation}
\label{PLformula}
\text{Mon}_{c_{ij}}(\delta)=\delta-\sum_k\langle \delta, \delta_k\rangle\delta_k,
\end{equation}
where $k$ runs through all the join cycles in the singularities of $f^{-1}(c_{ij})$ (see \cite[\S 6.6]{hodgehossein} ).
Therefore, in order to compute the monodromy of the fibration given by the polynomial $f(x,y)=h(y)+g(x)$, we just need to handle combinatorial aspects of Dynkin diagrams. In the remainder of the text, we denote as \textbf{$\text{Mon}(\delta)$}, the subspace generated by the orbit of $\delta$ by monodromy action.

\section{Monodromy for direct sum of polynomials with one critical value}
\label{Monfordirectsumonecrivalue}
In this section, we provide the monodromy matrix around $0$ for the polynomial $f(x,y)=y^{e}+x^{d}$, with $e=2,3,4$. For simplicity, we denote by $\delta^j_{i}$  the vanishing cycles in the row $i$ and column $j$. Thus we have the Dynkin diagram for $e=2,3,4$, 
\begin{center}
	\begin{tikzpicture}           
	\matrix (m) [matrix of math nodes, row sep=1em,
	column sep=1em]{
		\delta^1_1 & \delta^2_1& \delta^3_1&\delta^4_1&\delta^5_1& \delta^6_1 &\cdots &\delta^d_1 \\
	};			
	\path[-stealth]
	(m-1-1) edge (m-1-2)
	(m-1-3) edge (m-1-2)	
	(m-1-3) edge (m-1-4)	
	(m-1-5)	edge (m-1-4)	
	(m-1-7)	edge (m-1-6)
	(m-1-5)	edge (m-1-6)	
	;		
	\end{tikzpicture} 
\end{center}
\begin{center}
	\begin{tikzpicture}           
	\matrix (m) [matrix of math nodes, row sep=1em,
	column sep=1em]{
		\delta^1_1 & \delta^2_1& \delta^3_1&\delta^4_1&\delta^5_1& \delta^6_1 &\cdots &\delta^d_1 \\
		\delta^1_2 & \delta^2_2& \delta^3_2&\delta^4_2&\delta^5_2& \delta^6_2 &\cdots &\delta^d_2 \\
	};			
	\path[-stealth]
	(m-1-1) edge (m-1-2)
	(m-1-3) edge (m-1-2)	
	(m-1-3) edge (m-1-4)	
	(m-1-5)	edge (m-1-4)	
	(m-1-7)	edge (m-1-6)
	(m-1-5)	edge (m-1-6)	
	
	(m-2-1) edge (m-2-2)
	(m-2-3) edge (m-2-2)	
	(m-2-3) edge (m-2-4)	
	(m-2-5)	edge (m-2-4)	
	(m-2-7)	edge (m-2-6)
	(m-2-5)	edge (m-2-6)	
	
	(m-1-1)	edge (m-2-1)
	(m-1-2)	edge (m-2-2)
	(m-1-3)	edge (m-2-3)
	(m-1-4)	edge (m-2-4)	
	(m-1-5)	edge (m-2-5)	
	(m-1-6)	edge (m-2-6)	
	(m-1-8)	edge (m-2-8)
	
	(m-2-2) edge (m-1-1)
	(m-2-2) edge (m-1-3)
	(m-2-4) edge (m-1-3)
	(m-2-4) edge (m-1-5)
	(m-2-6) edge (m-1-5)
	;		
	\end{tikzpicture}
\end{center}
\begin{equation}
\label{dynkindeltaij}
	\begin{tikzpicture}           
	\matrix (m) [matrix of math nodes, row sep=1em,
	column sep=1em]{
		\delta^1_1 & \delta^2_1& \delta^3_1&\delta^4_1&\delta^5_1& \delta^6_1 &\cdots &\delta^d_1 \\
		\delta^1_2 & \delta^2_2& \delta^3_2&\delta^4_2&\delta^5_2& \delta^6_2 &\cdots &\delta^d_2 \\
		\delta^1_3 & \delta^2_3& \delta^3_3&\delta^4_3&\delta^5_3& \delta^6_3 &\cdots &\delta^d_3\ \\
	};			
	\path[-stealth]
	(m-1-1) edge (m-1-2)
	(m-1-3) edge (m-1-2)	
	(m-1-3) edge (m-1-4)	
	(m-1-5)	edge (m-1-4)	
	(m-1-7)	edge (m-1-6)
	(m-1-5)	edge (m-1-6)	
	
	(m-2-1) edge (m-2-2)
	(m-2-3) edge (m-2-2)	
	(m-2-3) edge (m-2-4)	
	(m-2-5)	edge (m-2-4)	
	(m-2-7)	edge (m-2-6)
	(m-2-5)	edge (m-2-6)
	
	(m-3-1) edge (m-3-2)
	(m-3-3) edge (m-3-2)	
	(m-3-3) edge (m-3-4)	
	(m-3-5)	edge (m-3-4)	
	(m-3-7)	edge (m-3-6)
	(m-3-5)	edge (m-3-6)	
	
	(m-1-1)	edge (m-2-1)
	(m-1-2)	edge (m-2-2)
	(m-1-3)	edge (m-2-3)
	(m-1-4)	edge (m-2-4)	
	(m-1-5)	edge (m-2-5)	
	(m-1-6)	edge (m-2-6)	
	(m-1-8)	edge (m-2-8)
	
	(m-3-1)	edge (m-2-1)
	(m-3-2)	edge (m-2-2)
	(m-3-3)	edge (m-2-3)
	(m-3-4)	edge (m-2-4)	
	(m-3-5)	edge (m-2-5)	
	(m-3-6)	edge (m-2-6)	
	(m-3-8)	edge (m-2-8)
	
	(m-2-2) edge (m-1-1)
	(m-2-2) edge (m-1-3)
	(m-2-4) edge (m-1-3)
	(m-2-4) edge (m-1-5)
	(m-2-6) edge (m-1-5)
	
	(m-2-2) edge (m-3-1)
	(m-2-2) edge (m-3-3)
	(m-2-4) edge (m-3-3)
	(m-2-4) edge (m-3-5)
	(m-2-6) edge (m-3-5)
	;		
	\end{tikzpicture},
\end{equation}
respectively. By Proposition \ref{intersectionformula}, the intersection matrix in the ordered vector basis \newline
$\delta^1_1, \ldots, \delta^1_e, \delta^2_1,\ldots, \delta^2_e,\ldots, \delta^d_1,\ldots, \delta^d_e$ for these Dynkin diagram are
\begin{equation}
\label{matrixM2}
\Psi_2=\begin{pmatrix}
0&-1&0&0&\ldots\\
1&0&1&0&\ldots\\
0&-1&0&-1&\ldots\\
0&0&1&0&\ldots\\
\vdots&\vdots&\vdots&\vdots&\\
\end{pmatrix}, 
\end{equation}
\begin{equation}
\label{matrixM3}
\Psi_3=\begin{pmatrix}
0&-1&-1&1&0&0&0&0&\ldots\\
1&0&0&-1&0&0&0&0&\ldots\\
1&0&0&-1&1&0&0&0&\ldots\\
-1&1&1&0&-1&1&0&0&\ldots\\
0&0&-1&1&0&-1&-1&1&\ldots\\
0&0&0&-1&1&0&0&-1&\ldots\\
0&0&0&0&1&0&0&-1&\ldots\\
0&0&0&0&-1&1&1&0&\ldots\\
\vdots&\vdots&\vdots&\vdots&\vdots&\vdots&\vdots&\vdots&\\
\end{pmatrix},
\end{equation}	
\begin{equation}
\label{matrixM4}
\Psi_4=\left(\begin{array} {@{}*{13}{c}@{}}
0&-1&0&-1&1&0&0&0&0&0&0&0&\ldots\\
1&0&1&0&-1&0&0&0&0&0&0&0&\ldots\\
0&-1&0&0&1&-1&0&0&0&0&0&0&\ldots\\
1&0&0&0&-1&0&1&0&0&0&0&0&\ldots\\
-1&1&-1&1&0&1&-1&1&-1&0&0&0&\ldots\\
0&0&1&0&-1&0&0&0&1&0&0&0&\ldots\\
0&0&0&-1&1&0&0&-1&0&-1&1&0&\ldots\\
0&0&0&0&-1&0&1&0&1&0&-1&0&\ldots\\
0&0&0&0&1&-1&0&-1&0&0&1&-1&\ldots\\
0&0&0&0&0&0&1&0&0&0&-1&0&\ldots\\
0&0&0&0&0&0&-1&1&-1&1&0&1&\ldots\\
0&0&0&0&0&0&0&0&1&0&-1&0&\ldots\\
\vdots&\vdots&\vdots&\vdots&\vdots&\vdots&\vdots&\vdots&\vdots&\vdots&\vdots&\vdots&\\
\end{array}\right).
\end{equation}
The matrices are antisymmetric, and the superior diagonals are periodic sequences.  For $\Psi_2$ the sequence is $(-1, 1,\ldots)$. For $\Psi_3$ the sequence are $(-1, 0, -1, -1, -1, 0, \ldots)$, $(-1, -1, 1, 1\ldots)$ and $(1, 0, 0, 0,\ldots)$. For $\Psi_4$ the sequence are $(-1,1,0, \ldots)$, $(0, 0, 1, 0 , -1, 0, \ldots)$, $(-1, -1, -1, 1, 1, 1,\ldots)$ and $(-1, 0, 0, 0, 1, 0,\ldots)$. From the Picard-Lefschetz formula (\ref{PLformula}), it follows that the monodromy matrices for $f(x,y)=y^e+x^d$ with $e=2,3,4$, are $$M_e=I_N-\Psi_e,$$
where $I_N$ is the identity matrix of rank $N=(d-1)(e-1)$.

For a vector $v$ and a matrix $M\in \mathcal{M}_N(\R)$, the Krylov subspace is the vectorial space generated by the vectors $M^lv$ where $l=0,2, \ldots,N-1$. Therefore, by taking $M$ as one of the monodromy matrices $M_2$, $M_3$ or $M_4$, and $v=v_k$ a vector of the canonical basis of $\R^N$, the Krylov subspace is 
\begin{equation}
\label{Mondeltaonecri}
\text{Mon}\left(\delta_{\mod_e(k)}^{ \lfloor{\frac{k}{e}} \rfloor }\right)
\end{equation}
for the fibration $y^e+x^d$. In the next proposition we provide the vanishing cycles  that are in (\ref{Mondeltaonecri}).
\begin{proposition}
	\label{freevanishingcycles}
	For the polynomial $y^2+x^d$,  the vanishing cycles in the subspace $\text{Mon}(\delta_i^j) $ are 
	\begin{table}[H]
		\footnotesize
		\centering
		\begin{tabular}{|l|l|}
			\hline
			Vanishing cycle $\delta_1^j$ & Vanishing cycles  $\delta_1^l$ in  $\text{Mon}(\delta_1^j)$\\
			\hline
			\hline	
			%$gcd(d,j)=1$& $l=1,\ldots, d-1$\\
			%\hline		
			$gcd(d,j)=r$& $l=r n$ with $n=1,\ldots, \frac{d}{r}-1$.\\
			\hline	
		\end{tabular}
	\end{table} 
	\noindent For the polynomial $y^3+x^d$ with $d\leq 100$ and $3\nmid d$, the vanishing cycles in the subspace $\text{Mon}(\delta_i^j) $ are 
	\begin{table}[H]
		\footnotesize
		\centering
		\begin{tabular}{|l|l|}
			\hline
			Vanishing cycle $\delta_i^j$ & Vanishing cycles  $\delta_m^l$ in  $\text{Mon}(\delta_i^j)$\\
			\hline
			\hline			
			$i=1,2$ and $gcd(d,j)=r$&$m=1,2$ and $l=r n$ with $n=1,\ldots, \frac{d}{r}-1.$\\
			\hline		
		\end{tabular}
	\end{table}
	\noindent When $3\mid d$, the number of different eigenvalues is less than $2(d-1)$. For the polynomial $y^4+x^d$ with $d\leq 100$ and $4\nmid d$, the vanishing cycles in the subspace $\text{Mon}(\delta_i^j) $ are 
	\begin{table}[H]
		\footnotesize
		\centering
		\begin{tabular}{|l|l|}
			\hline
			Vanishing cycle $\delta_i^j$ & Vanishing cycles  $\delta_m^l$ in  $\text{Mon}(\delta_i^j)$\\
			\hline
			\hline
			$i=1,3$ and $gcd(d,j)=r$&$m=1,2,3$ and $l=r n$ with $n=1,\ldots, \frac{d}{r}-1$\\
			\hline		
			$i=2$ and $gcd(d,j)=r$&$m=2$ and $l=r n$ with $n=1,\ldots, \frac{d}{r}-1$.\\
			\hline	
		\end{tabular}
	\end{table}
	\noindent When$4\mid d$, the number of different eigenvalues is less than $3(d-1)$.
\end{proposition}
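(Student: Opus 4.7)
The plan is to diagonalize the monodromy $M_e = I_N - \Psi_e$ by exploiting the $\Z/d \times \Z/e$ symmetry of $f = y^e + x^d$ coming from $(x,y) \mapsto (\zeta_d x, \zeta_e y)$. Since this symmetry commutes with the monodromy around $0$, one gets a Fourier decomposition
\[
H_1(X_b;\C) = \bigoplus_{k=1}^{d-1} \bigoplus_{l=1}^{e-1} V_{k,l}, \qquad \dim V_{k,l} = 1,
\]
indexed by the characters of $\Z/d \times \Z/e$, on which the monodromy acts by the scalar $\mu_{k,l} = \zeta_d^k \zeta_e^l$. The first step is to verify that under the hypothesis $e \nmid d$ the minimal polynomial of $M_e$ on each cyclic subspace $\text{Mon}(\delta_i^j)$ is squarefree: for $e = 2$ and $e = 3$ the $\mu_{k,l}$ are automatically pairwise distinct, while for $e = 4$ collisions may arise when $\gcd(d,4) = 2$ and their absence on the relevant cyclic subspaces has to be checked numerically, which is why the bound $d \le 100$ appears.

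The second step is to write each join cycle $\delta_i^j = \gamma_i * \sigma_j$ in this Fourier basis. Using the Brieskorn--Pham tensor structure $\delta_i^j = \gamma_i \tensor \sigma_j$, a direct calculation on the $d$-point fiber of $g(x) = x^d$ shows that the Fourier coefficient of $\sigma_j$ in the $\zeta_d^k$-eigenspace equals $\tfrac{1}{d}\zeta_d^{kj}(1-\zeta_d^k)$, nonzero for every $k \in \{1,\ldots,d-1\}$. The $y$-side coefficients $a_{i,l}$ of $\gamma_i$ are what distinguish the three cases: for $e = 2, 3$ all of them are nonzero, whereas for $e = 4$ the middle cycle $\gamma_2$ is the pullback of a single vanishing cycle under $y \mapsto y^2$, which forces $a_{2,1} = a_{2,3} = 0$ and leaves only $a_{2,2}$ nonzero.

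Once this Fourier data is assembled, one has $\text{Mon}(\delta_i^j) = \bigoplus_{(k,l):\, a_{i,l} \neq 0} V_{k,l}$, whose dimension depends only on $i$. Deciding whether a canonical basis vector $\delta_m^n$ lies in $\text{Mon}(\delta_i^j)$ reduces to two conditions, obtained by expressing $\delta_m^n$ as a $\C$-linear combination of the iterates $M_e^p\delta_i^j$ and matching Fourier coefficients: (a) $a_{i,l} = 0 \Rightarrow a_{m,l} = 0$, which fixes the allowed row indices $m$ (giving $m = 1$ for $e = 2$, $m \in \{1,2\}$ for $e = 3$, and the $\{1,2,3\}$ versus $\{2\}$ split for $e = 4$); and (b) a Vandermonde / character-orthogonality identity forcing $\zeta_d^{kn} = \zeta_d^{kj}$ for the surviving $k$'s, equivalent to $d/r$ dividing $n - j$ with $r = \gcd(d,j)$. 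Condition (b) picks out exactly $n \in \{r, 2r, \ldots, (d/r-1)r\}$, and the trailing ``$e \mid d$'' clauses follow from the elementary count that some $\mu_{k,l}$ must coincide as soon as $\gcd(d,e) > 1$.

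The main obstacle is the $e = 4$ case: both the spectral regularity and the precise vanishing of $a_{2,1}, a_{2,3}$ are delicate, and the cleanest statement only holds after imposing $4 \nmid d$. For $e = 4$ with $d$ even, the verification that the Krylov structure of $\delta_i^j$ under the explicit matrix $M_4$ matches the Fourier prediction is carried out numerically for $d \le 100$, which is the sole source of the numerical hypothesis in the statement.
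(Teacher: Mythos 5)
There is a genuine gap, and it sits at the very first step. The operator whose Krylov spaces the proposition describes is $M_e=I_N-\Psi_e$, i.e.\ the identity minus the \emph{sum} of the local intersection operators; the operator you diagonalize via the $\Z/d\times\Z/e$ character decomposition, with eigenvalues $\zeta_d^k\zeta_e^l$, is the classical Brieskorn--Pham monodromy, i.e.\ the \emph{product} of the Picard--Lefschetz transvections. These coincide only when the vanishing cycles at the critical value are pairwise disjoint, which is exactly what fails for $y^e+x^d$. The paper's own closed form for $e=2$ gives eigenvalues $1+2\sqrt{-1}\cos(j\pi/d)$, which are not roots of unity, so your spectral data cannot belong to $M_e$. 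Worse, the operator you are actually diagonalizing does not see the statement you need to prove: by your own computation the coefficients $\tfrac{1}{d}\zeta_d^{kj}(1-\zeta_d^k)$ of $\sigma_j$ are nonzero for every $k$, so the cyclic subspace of $\delta_i^j$ under that operator is independent of $j$ (for $e=2$ it is all of $H_1$ for every $j$), and the $\gcd(d,j)=r$ stratification --- the entire content of the tables, and of the decomposability criterion downstream --- cannot be recovered. In the paper the $\gcd$ condition comes from the vanishing $\sin(kj\pi/d)=0\iff d\mid kj$ of coordinates of the eigenvectors of $M_2$, not from any character-orthogonality; and your condition (b), ``$d/r$ divides $n-j$'', does not even reproduce the table (for $d=6$, $j=2$ it selects $n\in\{2,5\}$ instead of $n\in\{2,4\}$).

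Two further points. First, your Fourier data for $\gamma_2$ in the $e=4$ case is backwards: $\gamma_2$ is anti-invariant under $y\mapsto -y$, hence supported on the odd characters $l\in\{1,3\}$ with $a_{2,2}=0$; it lies in the kernel of $\pi_*$ for $\pi(x,y)=(x,y^2)$ (Lemma~\ref{ciclovertical}), not in the image of the pullback. By the symmetry of your criterion (a) this happens not to change the selected rows, but the stated justification is wrong. Second, for comparison, the paper's proof runs differently: since $\Psi_e$ is skew-symmetric, $M_e$ is normal and hence diagonalizable; the Krylov matrix factors through a Vandermonde matrix, so when the eigenvalues are pairwise distinct the Krylov space is the span of the eigenvectors occurring with nonzero coefficient; for $e=2$ this is made explicit by conjugating $M_2$ to a tridiagonal matrix with known eigenvectors proportional to $\sin(kj\pi/d)$, and for $e=3,4$ both the distinctness of the eigenvalues and the vanishing pattern of the coefficients are checked numerically for $d\le 100$, which is the sole source of the degree bound. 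A conceptual replacement for those numerics would have to describe the eigenvectors of $I_N-\Psi_e$ itself (for instance through the relation between Cartan-type matrices and Coxeter elements of the underlying diagram), not the Sebastiani--Thom character decomposition alone.
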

\begin{proof}
	Let $M$ be one of the matrices $M_2, M_3$ or $M_4$, and $v:=v_k=(0,0, \ldots,0,1,0\ldots, 0)$ for $k=1,\ldots, (d-1)(e-1)$. The corresponding  vanishing cycle to $v_k$ is $\delta_a^b$, with $a=\mod_e(k)$ and $b=\lfloor{\frac{k}{e}} \rfloor$. Since the matrices $\Psi_e$ are skew-symmetric, then $M$ is a normal matrix, consequently it is diagonalizable. Hence, its eigenvectors $u_j$ are a basis for $\R^N$. Then we can write $v=\sum_j r_j u_j$ for scalars $r_j$. Let $\lambda_j$ be the eigenvalue associated to $u_j$, thus we have \begin{equation*}
	M^lv=\sum_{j=1}^{N} r_j \lambda_j^l u_j,\hspace{4mm}\text{ where } N=(d-1)(e-1),
	\end{equation*}
	and the matrix $\{v, Mv, M^2v,\ldots, M^nv\}$ is $$\begin{pmatrix}
	r_1 u_1&r_2u_2&\ldots&r_Nu_n
	\end{pmatrix}
	\begin{pmatrix}
	1&\lambda_1&\lambda_1^2&\ldots&\lambda_1^{N-1}\\
	1&\lambda_2&\lambda_2^2&\ldots&\lambda_2^{N-1}\\
	\vdots&\vdots&\vdots&&\vdots\\
	1&\lambda_N&\lambda_N^2&\ldots&\lambda_N^{N-1}\\
	\end{pmatrix}.$$
	The matrix in the right, is the Vandermonde matrix with determinant $\prod_{i<j}(\lambda_j-\lambda_i)$. Hence, if the eigenvalues are different, then the Krylov subspace is the span of the vectors $u_l$ such that $r_l\neq 0$.
	
	For $e=2$, it is possible to show that the matrix $M_2$ is similar to a tridiagonal  matrix with main diagonal of 1s, first diagonal below of -1s, and first diagonal above of 1s. The change of basis is given by the diagonal matrix, whose diagonal is $(-1, 1, 1,-1,-1,1, 1, \ldots)$. Hence, there is a known closed form for the eigenvalues and eigenvectors of the matrix $M_2$ (see \cite{eigenvaluestri}). Namely, the eigenvalues are given by 
	\begin{equation*}
	\lambda_j=1+2\sqrt{-1}\cos\left(\frac{j\pi}{d}\right),\hspace{4mm}\text{ whit }j=1,\ldots, d-1.
	\end{equation*}
	If the vector $u_j=\left(u_j^{(1)}, u_j^{(2)},\ldots, u_j^{(d-1)}\right)^{T}$ is the eigenvector associated to $\lambda_j$,  then the $k-$th coordinate  satisfies
	\begin{equation*}
	u_j^{(k)}=(\sqrt{-1})^{k-1}\sqrt{\frac{2}{d}}\sin\left(\frac{kj\pi}{d}\right).
	\end{equation*}
	If we denote  $U=[u_1\hspace{4mm}u_2\hspace{4mm}\cdots\hspace{4mm} u_{d-1}]$ the matrix whose columns are the eigenvalues, then $UU^*=Id(d-1)$. Hence, if we want to know which eigenvectors are used in the representation of $\delta_1^l$, it is enough to note which terms in the row $l$ of $U$ are zero. This happens when $\frac{jl}{d}\in \Z$. Furthermore, the Krylov space of $\delta_1^l$ is contained in the Krylov space of $\delta_1^{l'}$, provided that the $j$'s such that $\frac{jl'}{d}\in \Z$, satisfy $\frac{jl}{d}\in \Z$. It is equivalent to $gcd(d,l') \mid gcd(d,l)$. 
	
	For $e=3,4$, we do not know a close form for the eigenvalues. However,  for given values of $d$, on a computer we can compute explicitly the eigenvalues, and a basis for the subspace generated by  these eigenvectors.  Next, we determine which vectors of the canonical base $\R^N$ are in this subspace.	If $e=3$ and $3\mid d$, then the number of different eigenvalues is less than $N$. The same is true for $e=4$ and $4\mid d$. In other cases the number of different eigenvalues is $N$. The reader can use the functions written in MATLAB, MonMatrix and VanCycleSub \footnote{https://github.com/danfelmath/Intersection-matrix-for-polynomials-with-1-crit-value.git}, for a numerical supplement of this proof (see \S \ref{numericalsection}). 
\end{proof}
\begin{remark}
	The condition $3\nmid d$ in the case  $M_3$  may be related with the fact that $y^3+x^d$ is a pullback with the map $(x,y)\to (x^{\frac{d}{3}}, y)$. Analogously, the condition $4\nmid d$ in the case $M_4$ associated to $y^4+x^d$.	

	On the other hand, in general a monodromy matrices is not diagonalizable. For example, the monodromy matrices of the mirror quintic Calabi-Yau threefold (see \cite{doranmorgan, DanielLopezquintic})
\end{remark}

\section{Monodromy problem for $y^4+g(x)$}
\label{maintheoremDynkin}
Let $g\in \R[x]_{\leq d}$ be a polynomial with real critical points. Consider the polynomial $f(x,y):=y^e+g(x)$ which  has critical values equal to the critical values of $g$. Recall, in some cases we relate the vertices in the Dynkin diagram to the critical values associated with the vanishing cycles. Thus, we denote by $C_j$ the critical value in the column $j$ from left to right in the Dynkin diagram, and $\delta^j_{i}$ to the vanishing cycle in the row $i$ over $C_j$. For example, if we suppose that $d$ is even and $C_1$ is a local maximum, then the Dynkin diagram looks like
\begin{equation}
\label{Dynkindiagramrealcriticalvalues}
\begin{tikzpicture}           
\matrix (m) [matrix of math nodes, row sep=1em,
column sep=0.7em]{
	C_{1} & C_{2}& C_{3}&C_{4}&C_{5}& C_6 &\cdots & C_{d-2}& C_{d-1} \\
	C_{1} & C_{2}& C_{3}&C_{4}&C_{5}& C_6 &\cdots & C_{d-2}& C_{d-1} \\
	C_{1} & C_{2}& C_{3}&C_{4}&C_{5}& C_6 &\cdots & C_{d-2}& C_{d-1} \\
};			
\path[-stealth]
(m-1-1) edge (m-1-2)
(m-1-3) edge (m-1-2)	
(m-1-3) edge (m-1-4)	
(m-1-5)	edge (m-1-4)	
(m-1-7)	edge (m-1-6)
(m-1-9)	edge (m-1-8)	
(m-1-5)	edge (m-1-6)	
(m-1-7)	edge (m-1-8)

(m-2-1) edge (m-2-2)
(m-2-3) edge (m-2-2)	
(m-2-3) edge (m-2-4)	
(m-2-5)	edge (m-2-4)	
(m-2-7)	edge (m-2-6)
(m-2-9)	edge (m-2-8)	
(m-2-5)	edge (m-2-6)	
(m-2-7)	edge (m-2-8)

(m-3-1) edge (m-3-2)
(m-3-3) edge (m-3-2)	
(m-3-3) edge (m-3-4)	
(m-3-5)	edge (m-3-4)	
(m-3-7)	edge (m-3-6)
(m-3-9)	edge (m-3-8)	
(m-3-5)	edge (m-3-6)	
(m-3-7)	edge (m-3-8)

(m-1-1)	edge (m-2-1)
(m-1-2)	edge (m-2-2)
(m-1-3)	edge (m-2-3)
(m-1-4)	edge (m-2-4)	
(m-1-5)	edge (m-2-5)	
(m-1-6)	edge (m-2-6)	
(m-1-8)	edge (m-2-8)	
(m-1-9)	edge (m-2-9)	

(m-3-1)	edge (m-2-1)
(m-3-2)	edge (m-2-2)
(m-3-3)	edge (m-2-3)
(m-3-4)	edge (m-2-4)	
(m-3-5)	edge (m-2-5)	
(m-3-6)	edge (m-2-6)	
(m-3-8)	edge (m-2-8)	
(m-3-9)	edge (m-2-9)

(m-2-2) edge (m-1-1)
(m-2-2) edge (m-1-3)
(m-2-4) edge (m-1-3)
(m-2-4) edge (m-1-5)
(m-2-6) edge (m-1-5)
(m-2-8) edge (m-1-9)

(m-2-2) edge (m-3-1)
(m-2-2) edge (m-3-3)
(m-2-4) edge (m-3-3)
(m-2-4) edge (m-3-5)
(m-2-6) edge (m-3-5)
(m-2-8) edge (m-3-9)
;		
\end{tikzpicture}. 
\end{equation}
%	We say that the column $j$ has \textbf{vertical symmetry} if there exist integer $r>1$ such that for any $i$ with $\text{g.c.d}(i,e)=r$  the critical values in the column $j$ satisfies
%	$$c^h_{i-k}+c^g_{j}=c^h_{i+k}+c^g_{j} \text { where } k=1, \ldots, r-1.$$ The vanishing cycles over the critical values $c^h_{r\cdot l }+c^g_{j}$ with $l=1,\ldots,\frac{e}{r}-1$ are called vanishing cycles with vertical symmetry.   Analogously,	
\begin{definition}
	\label{def:horsym}
	We say that the Dynkin diagram of $y^e+g(x)$ with $g\in\C[x]_{\leq d}$ has \textbf{horizontal symmetry} if there exits integer $r>1$ such that for any $j$ with $\text{g.c.d}(j,d)=r$   the critical values  satisfy $$C_{j-k}=C_{j+k} \text { where } k=1,\ldots, r-1.$$
	The vanishing cycles $\delta_i^{l\cdot r}$ with $l=1,\ldots, \frac{d}{r}-1$ are called \textbf{vanishing cycles with horizontal symmetry}. We can define the  \textbf{vertical symmetry} analogously.For the Dynkin diagram (\ref{Dynkindiagramrealcriticalvalues}) the cycles $\delta_2^j$  are \textbf{vanishing cycles with vertical symmetry}.
\end{definition}
From a direct computation in the Dynkin diagram (\ref{Dynkindiagramrealcriticalvalues})and Picard-Lefschetz formula,  we observe that only the terms $$\delta_i^{j-k}+\delta_i^{j+k}\text{ with }\text{g.c.d}(j,d)=r$$ 
and the cycles with horizontal symmetry appear in the subspace generated by the monodromy action on a cycle  with horizontal
symmetry. This happens in an analogous way for the vertical symmetry. Therefore, the subspace generated by the monodromy action on a cycle  with horizontal
symmetry or vertical symmetry is different to $H_1(f^{-1}(b),\Q)$. 

On the other hand, the definition of horizontal symmetry only depend on the relation among the critical values of $g$. Hence, for $p,q$ integers greater than 1, there are cycles with horizontal symmetry in the Dynkin diagram associated to $y^p+g(x)$ if and only if there are  in the Dynkin diagram associated to  $y^q+g(x)$.

For the vertical symmetry, in  the next lemma we provide a geometric characterization of  the cycles $\delta_2^j$ with $j=1,\ldots, d-1$. 
\begin{lemma}
	\label{ciclovertical}
	Consider the map $\C^2\xto{\pi} \C^2$, given by $\pi(x,y)=(x,y^2)$. The cycles $\delta_2^j\in H_1((y^4+g(x))^{-1}(b))$ for $j=1,\ldots, d-1$, are in the kernel of $$\pi_*: H_1((y^4+g(x))^{-1}(b))\to H_1((y^2+g(x))^{-1}(b)).$$
\end{lemma}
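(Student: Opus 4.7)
The map $\pi(x,y)=(x,y^2)$ realizes $f^{-1}(b)=\{y^4+g(x)=b\}$ as a degree-two branched covering of $\{z^2+g(x)=b\}$, with deck involution $\tau(x,y)=(x,-y)$ (well defined since $y\mapsto y^4$ is even). Because both fibers are smooth affine curves and hence have torsion-free $H_1$, it suffices to prove the stronger identity $\tau_*\delta_2^j=-\delta_2^j$: once this is established, the equality $\pi_*\tau_*=\pi_*$ forces $2\pi_*\delta_2^j=0$, and hence $\pi_*\delta_2^j=0$.

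Using the join cycle description $\delta_2^j=\gamma_2\ast\sigma_j$ recalled in Section \ref{dynkinrules}, together with the fact that $\tau$ moves only the $y$-coordinate, one obtains $\tau_*(\gamma_2\ast\sigma_j)=(\tau_y)_*(\gamma_2)\ast\sigma_j$, where $\tau_y\colon y\mapsto-y$ acts on a regular $4$-point fiber $\{y^4=\nu\}$. The problem therefore collapses to the zero-dimensional identity $(\tau_y)_*\gamma_2=-\gamma_2$ in $H_0(\{y^4=\nu\};\Z)$.

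To prove that identity I would introduce the $\tau_y$-symmetric (non-Morse) deformation $h_\mu(y)=y^4+\mu y^2$ with small $\mu<0$. Its three non-degenerate critical points are $y=0$ (the unique $\tau_y$-fixed one, a local maximum) and $y=\pm\sqrt{-\mu/2}$ (swapped by $\tau_y$, two local minima sharing the same critical value). In a regular fiber with $\nu$ slightly negative the two points collapsing at $y=0$ are exactly $\pm y_0(\nu)$ by evenness of $h_\mu$, so the vanishing $0$-cycle $\gamma^{(0)}$ at $y=0$ satisfies $(\tau_y)_*\gamma^{(0)}=-\gamma^{(0)}$ on the nose. A small odd perturbation $h_\mu+\epsilon y$ then turns the deformation into a Morse one whose three $0$-vanishing cycles, as $\epsilon\to 0$, specialize to the cycles $\gamma^{(-)},\gamma^{(0)},\gamma^{(+)}$ sitting over $y=-\sqrt{-\mu/2},\,0,\,+\sqrt{-\mu/2}$ respectively. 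A direct shared-point computation in a common regular fiber shows that $\gamma^{(-)}$ and $\gamma^{(0)}$ share one real root (hence are adjacent), $\gamma^{(0)}$ and $\gamma^{(+)}$ share one real root, while $\gamma^{(-)}$ and $\gamma^{(+)}$ are disjoint. By Proposition \ref{intersectionformula}, this is exactly the $0$-Dynkin path pattern that identifies the middle row of the Dynkin diagram (\ref{dynkindeltaij})—the unique row connected by vertical arrows to both rows $1$ and $3$—so $\gamma_2=\gamma^{(0)}$ in $H_0(\{y^4=\nu\};\Z)$. Chaining the steps together yields $\tau_*\delta_2^j=-\delta_2^j$.

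The main technical subtlety I anticipate is this last combinatorial identification: matching the row-$2$ label of the $2$D Dynkin with the $\tau_y$-fixed critical point $y=0$ via the shared-point analysis together with Proposition \ref{intersectionformula}. Once this bookkeeping is carried out, the remainder is a clean combination of the deck-transformation principle and the elementary reflection calculation described above.
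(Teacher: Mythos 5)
Your proposal is correct and rests on the same key idea as the paper's proof: an even perturbation of $y^4$ (the paper uses $y^4+\varepsilon(-y^2+\tfrac{\varepsilon}{8})$, your $y^4+\mu y^2$ is the same thing up to a constant) under which the row-$2$ vanishing $0$-cycle becomes the difference of the two inner roots, which are symmetric under $y\mapsto -y$. The only divergence is the final step — the paper simply notes that these two points have the same image under $y\mapsto y^2$, so $\pi_*$ kills the $0$-cycle and hence the join cycle directly, whereas you route through the deck involution $\tau$ and torsion-freeness of $H_1$; both are valid, and your explicit adjacency argument identifying the symmetric $0$-cycle with row $2$ of the Dynkin diagram supplies a detail the paper leaves implicit.
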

\begin{proof}
	Consider the perturbation $h_{\varepsilon}(y):=y^4+\varepsilon(-y^2+\frac{\varepsilon}{8})$ of $y^4$, where $\varepsilon\geq 0$. The roots of $h_{\varepsilon}(y)$ are $\frac{\pm 1}{2}\sqrt{\varepsilon (2\pm \sqrt{2})}$. Therefore, the 0-cycle associated to $\delta_2^j$, is $$\gamma_1=\left(\frac{1}{2}\sqrt{\varepsilon (2- \sqrt{2})},0\right)-\left(\frac{-1}{2}\sqrt{\varepsilon (2- \sqrt{2})},0\right)$$
	(see Figure \ref{perturbationy}). In the projection by $y^2$, these points are identified with  $\left(\frac{1}{4}\varepsilon(2-\sqrt{2}), 0\right)$. Consequently, the image of the  vanishing cycles $\delta_2^j=\gamma_1*\sigma_j$ by the map $\pi$ is trivial.
\end{proof}

Note that the kernel of $\pi_*$ is generated by the cycles  $$\gamma_1*\sigma_j\hspace{4mm}\text{ and }\hspace{4mm} 
(\gamma_3-\gamma_2)*\sigma_j, \hspace{4mm}\text{ for } j=1,\ldots d-1,$$ however, the first ones generate the others by monodromy.
\begin{figure}[h!]
	\centering
	\includegraphics[width=0.6\textwidth]{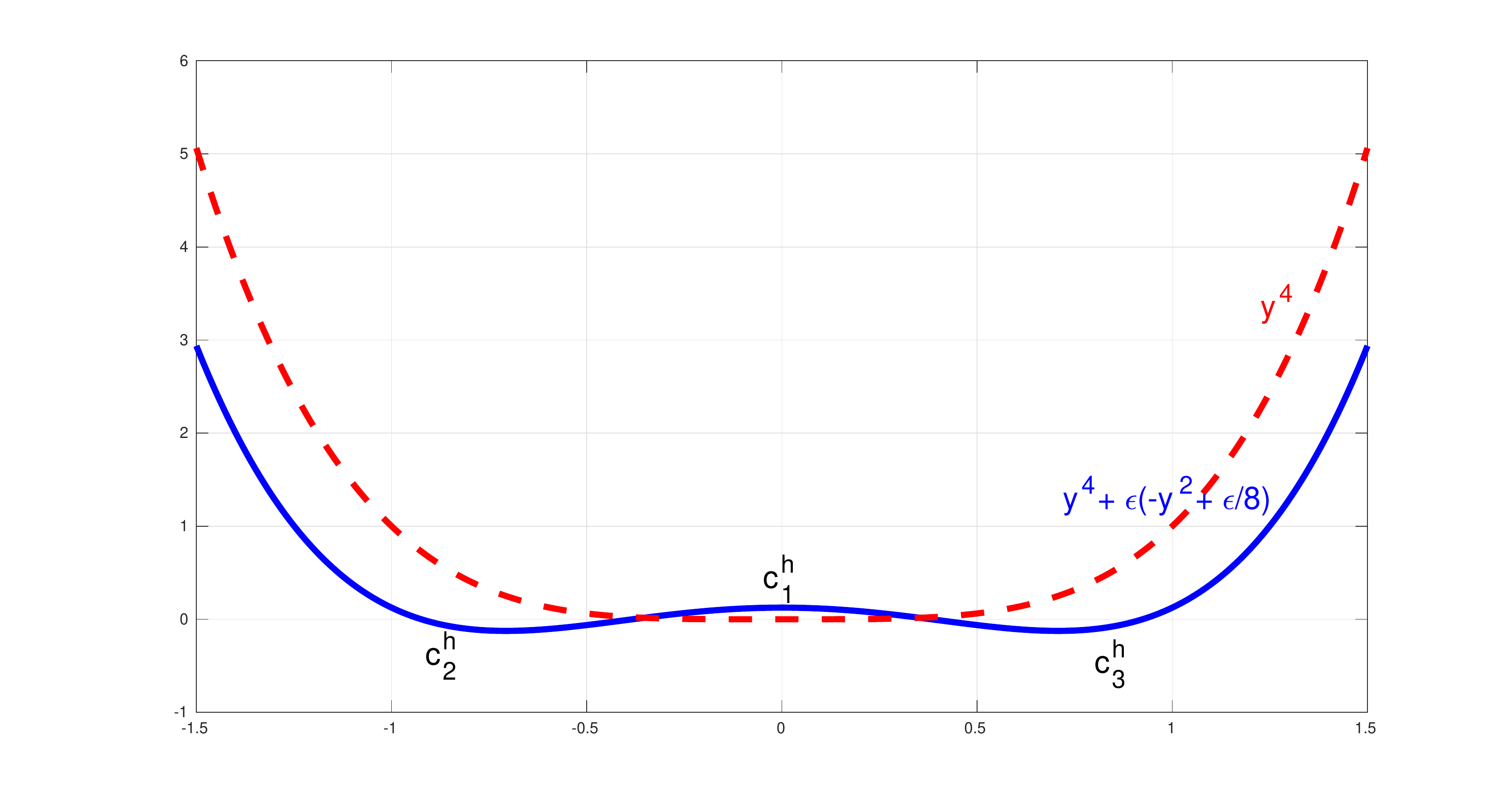}
	\caption{\footnotesize Critical values for $y^4+\varepsilon(-y^2+\frac{\varepsilon}{8})$, a perturbation of $y^4$.}
	\label{perturbationy}
\end{figure}

We want to study the vanishing cycles in the subspace generated by the monodromy action when there is no horizontal symmetry in the Dynkin diagram. By using Lemma \ref{monodromysubgroup}, we can reduce this analysis to the cases $\pi_1(\p^1\setminus C)\equiv \Z$ and $\pi_1(\p^1\setminus C)\equiv \Z^2$. 
\begin{lemma}
	\label{monodromysubgroup}
	Let $g$ be a polynomial of degree $d$,  and let $G_1$ and $G_g$ be the monodromy groups associated to $y^4+x^d$ and $y^4+g(x)$, respectively. For any $v\in H_1((y^4+g)^{-1}(b),\Z)$ the subspaces generated by the orbits satisfies $\langle G_1\cdot v\rangle \subset \langle G_g\cdot v\rangle$. Besides, if $C$ has more than one element, then there exist a group of two elements $G_2<Aut(V_g)$ such that $\langle G_2\cdot v\rangle \subset \langle G_g\cdot v\rangle$.
\end{lemma}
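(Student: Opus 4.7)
The plan is to prove the first inclusion by a deformation argument and the second by partitioning the critical set. For the first part, the idea is that the monodromy at infinity of $y^4+g(x)$ is a deformation invariant, and by deforming $g$ to $x^d$ it must coincide with the total monodromy of $y^4+x^d$, which generates $G_1$. For the second part, once $|C|\geq 2$, two genuinely different monodromy generators already sit inside $G_g$.

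Concretely, I would fix a smooth family $\{g_s\}_{s\in[0,1]}$ of degree-$d$ polynomials with $g_0=x^d$ and $g_1=g$ (for instance $g_s=(1-s)x^d+sg$, after adjusting coefficients to keep the leading term nonzero). All critical values $C_s$ of the family depend continuously on $s$ and stay inside some compact set $K\subset\C$; pick a large counterclockwise loop $\gamma_\infty$ in $\C\setminus K$. Because $y^4+g_s$ is tame of constant degree $d$, the restriction of its Milnor fibration to $\gamma_\infty$ admits a simultaneous trivialization in $s$, so the monodromy operator $T_\infty(s)$ along $\gamma_\infty$ is, up to this trivialization, independent of $s$. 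For $s=0$ the polynomial $y^4+x^d$ has a single critical value at $0$, and $\gamma_\infty$ is homotopic in $\C\setminus\{0\}$ to a simple loop around $0$, so $T_\infty(0)$ generates $G_1$. For $s=1$ the same operator $T_\infty(1)$ belongs to $G_g$, since $\gamma_\infty\in\pi_1(\C\setminus C,b)$. Transporting through the trivialization exhibits $G_1$ as a cyclic subgroup of $G_g$ inside $\text{Aut}(V_g)$, whence $\langle G_1\cdot v\rangle\subset\langle G_g\cdot v\rangle$ for every $v\in V_g$.

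For the second claim, assume $|C|\geq 2$ and write $C=C_1\sqcup C_2$ with both $C_i$ nonempty. Choose loops $\gamma_1,\gamma_2\in\pi_1(\C\setminus C,b)$ enclosing $C_1$ and $C_2$ respectively, with disjoint interior disks; their monodromies $T_1,T_2\in G_g$ generate a subgroup $G_2<\text{Aut}(V_g)$ on two generators, and $\langle G_2\cdot v\rangle\subset\langle G_g\cdot v\rangle$ is tautological. The main obstacle is to justify rigorously the simultaneous trivialization of the Milnor fibration over $\gamma_\infty$ throughout the family: this requires the tameness of $y^4+g_s$ and the fact that the behavior at infinity of a tame polynomial of fixed degree is deformation-invariant (cf.\ the general framework in \cite[\S 6.3]{hodgehossein}). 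Once that trivialization is set up, identifying the generator of $G_1$ with $T_\infty(1)\in G_g$ is formal, and the two-generator argument for $G_2$ then completes the proof.
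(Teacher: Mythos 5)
Your deformation argument targets the wrong operator, and this is fatal for the way the lemma is used. In this paper $G_1$ is the cyclic group generated by the matrix $M_4=I_N-\Psi_4$ of \S\ref{Monfordirectsumonecrivalue}, i.e.\ by the operator $\delta\mapsto\delta-\sum_k\langle\delta,\delta_k\rangle\delta_k$ with the sum running over \emph{all} join cycles; it is the Krylov subspaces of this matrix that Proposition \ref{freevanishingcycles} computes and that the lemma must transfer into Proposition \ref{mainPropDynkin}. Your monodromy at infinity is instead the \emph{ordered product} $\prod_j\text{Mon}_{c_j}=\prod_j(I_N-A_j)$ of the local Picard--Lefschetz operators. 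These do not coincide: expanding the product produces cross terms $A_jA_k$ which are nonzero precisely because join cycles over distinct critical values intersect (see $\Psi_4$ in (\ref{matrixM4})). Already for $y^2+x^3$ the product of the two Picard--Lefschetz operators has finite order, while $I_2-\Psi_2=\left(\begin{smallmatrix}1&1\\-1&1\end{smallmatrix}\right)$ has determinant $2$ and so is not even induced by a loop. Hence, even after the simultaneous trivialization over $\gamma_\infty$ is fully justified, you have proved the inclusion for a different cyclic group than the one the paper calls $G_1$. Worse, the inclusion you do prove is vacuous: $\gamma_\infty$ already represents an element of $\pi_1(\C\setminus C,b)$, so $T_\infty(1)\in G_g$ by definition and $\langle\langle T_\infty(1)\rangle\cdot v\rangle\subset\langle G_g\cdot v\rangle$ requires no deformation at all. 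The same objection applies to your $G_2$: choosing two genuine elements $T_1,T_2\in G_g$ makes the stated inclusion tautological, but the $G_2$ the paper needs is generated by the two ``collapsed'' operators $I_N-\sum_{j\in J_i}A_j$ attached to a partition $C=J_1\sqcup J_2$ (identification of critical values in the Dynkin diagram); these are in general \emph{not} elements of $G_g$, and they are what become the matrices $m_1,m_2$ in the proof of Proposition \ref{mainPropDynkin}.

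The paper's actual proof is a short piece of linear algebra that your proposal does not contain: every generator of $G_g$ has the form $I_N-A_j$ with $\sum_jA_j=\Psi_4$, so $M_4=\sum_j(I_N-A_j)-(|C|-1)I_N$ is an affine combination of elements of $G_g\cup\{I_N\}$. Since $\langle G_g\cdot v\rangle$ contains $v$ and is $G_g$-invariant, it is therefore $M_4$-invariant, whence $M_4^kv\in\langle G_g\cdot v\rangle$ for all $k$ and $\langle G_1\cdot v\rangle\subset\langle G_g\cdot v\rangle$. The identical computation applied to the two block sums $I_N-\sum_{j\in J_i}A_j$ yields the required $G_2$. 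Any repair of your geometric approach would have to produce these sum-type (collision-of-critical-values) operators rather than the product that the monodromy at infinity gives.
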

\begin{proof}
	Any element in $G_g$ can be written as a matrix $I_N-A_j\in \mathcal M_{3(d-1)}(\R)$. Moreover, this $A_j$ is constructed by putting  rows of zeros in the matrix $\Psi_4$ of equation (\ref{matrixM4}). Thus, we have that $M_4=\sum_{A_j\in G_g}I_N-A_j+(1-|G_g|)I_N$. Consequently, $M_4^k v\in \langle G_g\cdot v\rangle$ for $k\in \Z$. 
	
	In order  to construct $G_2$  is enough to divide in two groups the elements of $G_g$, and define two matrix as the sum of the matrices in these groups. Note that these sums correspond with  identifications of some critical values in the Dynkin diagram.
\end{proof}
 	The next proposition follows from the proposition \ref{freevanishingcycles} and lemma \ref{monodromysubgroup}.
\begin{proposition}
	\label{mainPropDynkin}
	Let $g$ be a polynomial of degree $d$, where $d\leq 100$ and $4\nmid d$. If the Dynkin diagram of $f(x,y)=y^4+g(x)$ does not have  horizontal symmetry, then the subspace of $H_1(f^{-1}(b),\Q )$ generated by the orbit of a vanishing cycle $\delta^j_{i}$ contains  all the vanishing cycles in the row $i$. Moreover, if $i$ is 1 or 3, then the submodule generated is whole space  $H_1(f^{-1}(b),\Q )$.
\end{proposition}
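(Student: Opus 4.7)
The plan is to combine Proposition \ref{freevanishingcycles} with Lemma \ref{monodromysubgroup} and a finite combinatorial analysis enabled by the bound $d \leq 100$.

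Set $r := \gcd(j,d)$. The inclusion $\langle G_1 \cdot \delta_i^j\rangle \subseteq \langle G_g \cdot \delta_i^j\rangle$ provided by Lemma \ref{monodromysubgroup}, together with Proposition \ref{freevanishingcycles} applied to $y^4+x^d$, shows that $\mathrm{Mon}(\delta_i^j)$ already contains the cycles $\delta_m^{rn}$ for $n = 1,\ldots,d/r-1$, with $m = 2$ when $i = 2$ and $m \in \{1,2,3\}$ when $i \in \{1,3\}$. In the special case $r = 1$ this immediately yields the claim: an entire row for $i = 2$, and the whole of $H_1(f^{-1}(b),\Q)$ for $i \in \{1,3\}$, and no further argument is needed.

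When $r > 1$, the hypothesis of no horizontal symmetry applied to this particular $r$ produces some $j'$ with $\gcd(j',d) = r$ and some $k \in \{1,\ldots,r-1\}$ such that $C_{j'-k} \neq C_{j'+k}$. I then invoke the second half of Lemma \ref{monodromysubgroup}: partitioning the critical values of $g$ into two blocks yields matrices $A, B \in \mathrm{Aut}(V_g)$, each a sum of Picard--Lefschetz transformations around the critical values of its block, whose action on $\delta_i^j$ lies in $\langle G_g \cdot \delta_i^j\rangle$. I pick the partition so that $C_{j'-k}$ and $C_{j'+k}$ lie in different blocks. Because the partition then fails to respect the $r$-periodic structure, applying $A$ to one of the $r$-periodic cycles produced in the first step breaks the $r$-periodicity of the column support in the Dynkin diagram (\ref{dynkindeltaij}), producing a new cycle $v \in \langle G_g \cdot \delta_i^j\rangle$ whose column support has gcd with $d$ equal to some $r' < r$.

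Starting from $v$ and applying the first step with the smaller value $r'$ enlarges the orbit further; repeating the two-step procedure yields a strictly decreasing sequence $r > r' > r'' > \cdots$ of positive integers, which must terminate at $1$ after finitely many iterations, at which point the first step concludes the argument. The main obstacle is verifying rigorously that each reduction step genuinely produces a cycle with strictly smaller gcd; this amounts to a combinatorial check on the intersection numbers in (\ref{dynkindeltaij}) and on the admissible partitions compatible with the absence of horizontal symmetry. Rather than attempting a uniform bound in $d$, I carry this out numerically for every $d \leq 100$, every relevant $j$, and every admissible partition, in direct analogy with the computer-assisted portion of the proof of Proposition \ref{freevanishingcycles}.
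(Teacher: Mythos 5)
Your overall skeleton is the same as the paper's: reduce to $y^4+x^d$ via Lemma \ref{monodromysubgroup} and Proposition \ref{freevanishingcycles}, handle $r=\gcd(j,d)=1$ immediately, and for $r>1$ use the failure of horizontal symmetry to locate a column where $C_{j-k}\neq C_{j+k}$, split the critical values into two classes $A$ and $B$, and run a descent on $r$. The problem is that you leave the decisive step --- showing that the words in $\mathrm{Mon}_A$ and $\mathrm{Mon}_B$ actually produce something new at column $j-k$ --- as an assertion plus a promise of an unspecified numerical check over all $d\le 100$, all $j$, and all partitions. The paper does not need any new numerics here: it writes down the two explicit $3\times 3$ matrices $m_1,m_2$ describing the action on the basis $\delta_1^{j-k},\delta_2^{j-k},\delta_3^{j-k}$, computes the words $(m_2m_1)^s v$, $m_1(m_2m_1)^s v$, etc., obtains a combination $w=m\,\delta_1^{j-k}+n\,\delta_2^{j-k}+m\,\delta_3^{j-k}$ with $n\neq 0$, and then applies one more monodromy to isolate the single vanishing cycle $\delta_2^{j-k}$. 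That computation is local to three consecutive columns and hence uniform in $d$; the only genuinely computer-assisted input to the whole proposition is Proposition \ref{freevanishingcycles} itself.

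Beyond being unexecuted, your reduction step is also formulated too weakly to close the induction. You claim only to produce ``a new cycle $v$ whose column support has gcd with $d$ equal to some $r'<r$.'' But the first step of the argument (Proposition \ref{freevanishingcycles} transported by Lemma \ref{monodromysubgroup}) applies to an actual basis vanishing cycle $\delta_m^l$, not to an arbitrary vector with prescribed column support; to iterate you must isolate a genuine $\delta_2^{j-k}$ (or justify why the Krylov space of your combination still contains what is needed), which is exactly what the paper's explicit matrix manipulation accomplishes. Finally, note that the inequality $r'=\gcd(j-k,d)<r$ that both you and the paper invoke to terminate the descent is not a formality --- $k<r$ and $r\mid j$ only guarantee $r\nmid(j-k)$, not $r'<r$ in general --- so if you do set this up numerically you should check termination as well rather than assume it.
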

\begin{proof}
	The restrictions on the degree $d$  are due to the Proposition \ref{freevanishingcycles}. By Proposition \ref{freevanishingcycles}  the result is true for $gcd(j,d)=1$ and $g(x)=x^d$, then by Lemma \ref{monodromysubgroup} is true for any $g\in \C[x]_{\leq d}$. If $gcd(j,d)=r>1$, then $\text{Mon}(\delta_i^j)$ contains  the vanishing cycles $\delta_i^l$ with $l=rn$ and $n=1,\ldots, \frac{d}{r}-1$. Since the rows do not have horizontal symmetry, then $d$ is prime or there are at least two different critical values. However if $d$ is prime, then $r=1$. Hence,  by using lemma \ref{monodromysubgroup} we suppose that there are two critical values $A$ and $B$. Thus, it is enough to consider a initial vanishing cycle $v:=\delta_{2}^j$ where the critical values $C_{j-l}$ and $C_{l+j}$ are equal for $l=1,\ldots, k-1<r-1$. Also, the critical values $C_{j-k}$ and $C_{j+k}$ are different. We can suppose that the Dynkin diagram looks like 
	\begin{center}
		\begin{tikzpicture}           
		\matrix (m) [matrix of math nodes, row sep=1em,
		column sep=0.7em]{
			* &\cdots & C_{j-k-1}& C_{j-k}&C_{j-k+1}&\cdots &A& A &A&\cdots& C_{k-1+j}& C_{k+j}&*&\cdots &* \\
			* &\cdots & D& B&A&\cdots &A& A &A&\cdots& A& A&*&\cdots &* \\
			* &\cdots & D& B&A&\cdots &A& A &A&\cdots& A& A&*&\cdots &* \\	
		};			
		\path[-stealth]
		(m-1-1) edge (m-1-2)
		(m-1-3) edge (m-1-2)	
		(m-1-3) edge (m-1-4)	
		(m-1-5)	edge (m-1-4)	
		(m-1-7)	edge (m-1-6)
		(m-1-9)	edge (m-1-8)	
		(m-1-5)	edge (m-1-6)	
		(m-1-7)	edge (m-1-8)
		(m-1-9)	edge (m-1-10)
		(m-1-11) edge (m-1-10)
		(m-1-11) edge (m-1-12)
		(m-1-13) edge (m-1-12)
		(m-1-13) edge (m-1-14)
		(m-1-15) edge (m-1-14)
		
		(m-2-1) edge (m-2-2)
		(m-2-3) edge (m-2-2)	
		(m-2-3) edge (m-2-4)	
		(m-2-5)	edge (m-2-4)	
		(m-2-7)	edge (m-2-6)
		(m-2-9)	edge (m-2-8)	
		(m-2-5)	edge (m-2-6)	
		(m-2-7)	edge (m-2-8)
		(m-2-9)	edge (m-2-10)
		(m-2-11) edge (m-2-10)
		(m-2-11) edge (m-2-12)
		(m-2-13) edge (m-2-12)
		(m-2-13) edge (m-2-14)
		(m-2-15) edge (m-2-14)
		
		(m-3-1) edge (m-3-2)
		(m-3-3) edge (m-3-2)	
		(m-3-3) edge (m-3-4)	
		(m-3-5)	edge (m-3-4)	
		(m-3-7)	edge (m-3-6)
		(m-3-9)	edge (m-3-8)	
		(m-3-5)	edge (m-3-6)	
		(m-3-7)	edge (m-3-8)
		(m-3-9)	edge (m-3-10)
		(m-3-11) edge (m-3-10)
		(m-3-11) edge (m-3-12)
		(m-3-13) edge (m-3-12)
		(m-3-13) edge (m-3-14)
		(m-3-15) edge (m-3-14)
		
		(m-1-1)	edge(m-2-1)
		(m-1-3)	edge (m-2-3)
		(m-1-4)	edge (m-2-4)	
		(m-1-5)	edge (m-2-5)	
		(m-1-7)	edge (m-2-7)
		(m-1-8)	edge (m-2-8)	
		(m-1-9)	edge (m-2-9)	
		(m-1-11) edge (m-2-11)
		(m-1-12) edge (m-2-12)
		(m-1-13) edge (m-2-13)
		(m-1-15) edge (m-2-15)
		
		(m-3-1)	edge(m-2-1)
		(m-3-3)	edge (m-2-3)
		(m-3-4)	edge (m-2-4)	
		(m-3-5)	edge (m-2-5)	
		(m-3-7)	edge (m-2-7)
		(m-3-8)	edge (m-2-8)	
		(m-3-9)	edge (m-2-9)	
		(m-3-11) edge (m-2-11)
		(m-3-12) edge (m-2-12)
		(m-3-13) edge (m-2-13)
		(m-3-15) edge (m-2-15)
		
		(m-2-4) edge (m-1-3)
		(m-2-4) edge (m-1-5)
		(m-2-8) edge (m-1-7)
		(m-2-8) edge (m-1-9)	
		(m-2-12) edge (m-1-11)
		(m-2-12) edge (m-1-13)
		
		(m-2-4) edge (m-3-3)
		(m-2-4) edge (m-3-5)
		(m-2-8) edge (m-3-7)
		(m-2-8) edge (m-3-9)	
		(m-2-12) edge (m-3-11)
		(m-2-12) edge (m-3-13)
		;		
		\end{tikzpicture}. 
	\end{center}	
	where $D$ can be $A$ or $B$, and $*$ means that no matter what value it is. We denote by $\text{Mon}_A$ and $\text{Mon}_B$ the monodromy action around to the critical values $A$ and $B$, respectively. By doing $\text{Mon}_B(\text{Mon}_A)^{k-1}(v)$, we get  a linear combination of cycles in the column $k$. In fact we have one of the next possibilities
	$$(m_2m_1)^s v, m_1(m_2m_1)^s v, (m_1m_2)^s v, m_2(m_1m_2)^s v, \text{ }s\in \N,$$ where
	$$m_1=\begin{pmatrix}
	-1&1&0\\
	0&-1&0\\
	0&1&-1
	\end{pmatrix}\text{ , }m_2=\begin{pmatrix}
	1&0&0\\
	-1&1&-1\\
	0&0&1
	\end{pmatrix}
	$$ and the matrices are in the basis $\delta^{j-k}_1, \delta^{j-k}_2, \delta^{j-k}_3$. Hence, we generate the linear combination  $w:=m  \delta^{j-k}_1+n\delta^{j-k}_2+m\delta^{j-k}_3\text{ where }m\in \Z \text{ and }n\in \Z^*.$
	If $D=B$,  taking $\text{Mon}_B\text{Mon}_A(w)$, we get \begin{equation*}
	(n-3m)\delta^{j-k}_1+(2m-n)\delta^{j-k}_2+(n-3m)\delta^{j-k}_3\text{, or }
	\end{equation*} \begin{equation*}
	(n-m)\delta^{j-k}_1+(2m-3n)\delta^{j-k}_2+(n-m)\delta^{j-k}_3.
	\end{equation*}
	Any of the linear combinations in the previous equation and $w$ generate the vanishing cycle $\delta^{j-k}_2$. 
	If $D=A$, considering  $\text{Mon}_B(w)$  and $w$ we also generate the cycle $\delta^{j-k}_2$. If $gcd(j-k,d)=1$, then the results follows from proposition \ref{freevanishingcycles}. If  $gcd(j-k,d)=r'$, then we repeat the previous analysis with $r'$ instead of $r$, thus the proof follows from $r'<r$.
\end{proof}
The  next propositions  are proved with  analogous arguments as in the proof of Proposition \ref{mainPropDynkin}. For this reason, in their proof we only indicate the corresponding matrices $m_1$ and $m_2$ .
\begin{proposition}
	\label{CoroDynkin3}
	Let $g$  be a polynomial of degree $d$, where $d\leq 100$ and $3\nmid d$. If the Dynkin diagram of $f(x,y)=y^3+g(x)$ does not have  horizontal symmetry, then the subspace generated by the orbit of a vanishing cycle $\delta^j_{i}$ is the whole space $H_1(f^{-1}(b),\Q )$.
\end{proposition}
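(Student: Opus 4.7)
The plan is to mirror the proof of Proposition \ref{mainPropDynkin}, simplified by the fact that for $y^3+g(x)$ each column of the Dynkin diagram carries only two vanishing cycles and there is no analogue of the vertical-symmetry obstruction coming from Lemma \ref{ciclovertical}; consequently we obtain the whole $H_1(f^{-1}(b),\Q)$ rather than merely a single row.

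I first handle the base case $\gcd(j,d)=1$: Proposition \ref{freevanishingcycles} applied to $y^3+x^d$ (with $3\nmid d$ and $i\in\{1,2\}$) already gives $\text{Mon}(\delta_i^j)=H_1((y^3+x^d)^{-1}(b),\Q)$, and Lemma \ref{monodromysubgroup} transfers this to arbitrary $g$ of degree $d$. For $r=\gcd(j,d)>1$ I induct on $r$, following the structure of Proposition \ref{mainPropDynkin}. Proposition \ref{freevanishingcycles} combined with Lemma \ref{monodromysubgroup} places every $\delta_m^{rn}$ (with $m=1,2$ and $n=1,\ldots,d/r-1$) in $\text{Mon}(\delta_i^j)$. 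The failure of horizontal symmetry at level $r$ supplies some $j'$ with $\gcd(j',d)=r$ and a minimal $k\in\{1,\ldots,r-1\}$ with $C_{j'-l}=C_{j'+l}$ for $l<k$ but $C_{j'-k}\neq C_{j'+k}$. After replacing the initial cycle by $\delta_2^{j'}$ and reducing, via Lemma \ref{monodromysubgroup}, to a configuration with only two critical values $A,B$ such that $C_{j'-k}=B$ and $C_{j'-k+l}=C_{j'+k}=A$, the surrounding diagram looks just as in Proposition \ref{mainPropDynkin} but with two rows instead of three.

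The heart of the argument is the local $2\times 2$ calculation. Applying $\text{Mon}_B(\text{Mon}_A)^{k-1}$ to $\delta_2^{j'}$ produces a nonzero element $w\in\langle\delta_1^{j'-k},\delta_2^{j'-k}\rangle$. The subsequent action of $\text{Mon}_A$ and $\text{Mon}_B$ on this $2$-plane (after the adjacent-column contributions cancel in the relevant iterations) is described by two $2\times 2$ integer matrices $m_1, m_2$ extracted from the intersection form $\Psi_3$ of (\ref{matrixM3}) by the Picard--Lefschetz formula (\ref{PLformula}), in exact analogy with the $3\times 3$ matrices of Proposition \ref{mainPropDynkin}. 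The orbit of $w$ under the group $\langle m_1,m_2\rangle$ spans the full $2$-plane (using that each $m_i-I$ has rank $1$ with eigen-lines transverse to each other, so two independent applications suffice), whence both $\delta_1^{j'-k}$ and $\delta_2^{j'-k}$ lie in $\text{Mon}(v)$. The inductive hypothesis applied to $\delta_i^{j'-k}$ then completes the proof; the induction terminates because the process strictly enlarges $\text{Mon}(v)$ inside the finite-dimensional space $H_1(f^{-1}(b),\Q)$ and must eventually reach a column with $\gcd=1$.

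The step I anticipate as the main obstacle is the explicit determination of $m_1$ and $m_2$: the alternating-sign pattern in $\Psi_3$ makes their entries depend on the parity of $j'-k$ and on whether $C_{j'-k-1}$ equals $A$ or $B$ (the ``if $D=A$'' / ``if $D=B$'' split in Proposition \ref{mainPropDynkin}). A careful case analysis will be needed to confirm that in every case the resulting subgroup of $GL_2(\Z)$ has orbits spanning $\Q^2$; and in parallel one must verify, as in the $y^4$ case, that the non-termination possibility of the induction (a cycle $r\to r'\to r\to\cdots$ among divisors of $d$) is ruled out because each round adds at least one new column to $\text{Mon}(v)$.
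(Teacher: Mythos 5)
Your proposal follows essentially the same route as the paper, which proves this proposition by repeating the argument of Proposition \ref{mainPropDynkin} verbatim and only records the two $2\times 2$ matrices $m_1=\left(\begin{smallmatrix}-1&1\\0&-1\end{smallmatrix}\right)$, $m_2=\left(\begin{smallmatrix}1&0\\-1&1\end{smallmatrix}\right)$, from which $w=m\delta_1^j+n\delta_2^j$ ($n\neq 0$) together with $m_1w$ and $m_2w$ spans the column. Your parenthetical that each $m_i-I$ has rank $1$ is not literally true of these matrices (e.g. $m_1-I$ has rank $2$), but you flag that step for verification and the spanning claim itself checks out directly, so the argument is the paper's.
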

\begin{proof}
	Consider the matrices  $$m_1=\begin{pmatrix}
	-1&1\\
	0&-1
	\end{pmatrix}\text{ , }m_2=\begin{pmatrix}
	1&0\\
	-1&1
	\end{pmatrix},$$ thus considering the initial vanishing cycle $v:=\delta_2^j$, we get a vector $w=m\delta_1^j+n\delta_2^j$ where $m\in \Z$ and $n\in \Z^*$.
\end{proof}
\begin{proposition}
	\label{CoroDynkin2}
If the Dynkin diagram of $f(x,y)=y^2+g(x)$ does not have  horizontal symmetry, then the subspace generated by the orbit of a vanishing cycle $\delta^j_{i}$ is the whole space $H_1(f^{-1}(b),\Q )$.
\end{proposition}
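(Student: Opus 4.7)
The plan is to mirror the proofs of Propositions \ref{mainPropDynkin} and \ref{CoroDynkin3}, taking advantage of the dramatic simplification that for $e = 2$ each column of the Dynkin diagram carries a single vanishing cycle. Accordingly, the auxiliary column matrices $m_1, m_2$ will collapse to the $1 \times 1$ scalars $m_1 = (-1)$ and $m_2 = (1)$, which record the alternating $\pm 1$ entries on the superdiagonal of $\Psi_2$.

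First I would dispose of the base case. By Proposition \ref{freevanishingcycles} applied with $g(x) = x^d$, together with the evident $e=2$ analog of Lemma \ref{monodromysubgroup} (whose proof is verbatim the same, since the monodromy $M_2$ of $y^2 + x^d$ around its single critical value is the product of the local monodromies of $y^2 + g(x)$ around its distinct critical values), the subspace $\mathrm{Mon}(\delta_1^j)$ always contains the cycles $\delta_1^{nr}$ with $r = \gcd(j,d)$ and $n = 1, \ldots, d/r - 1$. In particular, when $\gcd(j,d) = 1$ this already equals $H_1(f^{-1}(b), \Q)$ and we are done.

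Next I would argue by induction on $r = \gcd(j,d) > 1$. If $d$ is prime then $r = 1$, contradicting $r > 1$, so $d$ is composite; by the absence of horizontal symmetry we may assume there are two distinct critical values $A, B$ and a smallest index $k \in \{1, \ldots, r-1\}$ with $C_{j-k} \neq C_{j+k}$, while $C_{j-l} = C_{j+l}$ for $l < k$. Applying the composition $\mathrm{Mon}_B (\mathrm{Mon}_A)^{k-1}$ to $v = \delta_1^j$ --- the very recipe from the proof of Proposition \ref{mainPropDynkin} but with the scalars $m_1 = -1$, $m_2 = 1$ in place of matrices --- produces a cycle $w$ supported in column $j-k$ with nonzero coefficient. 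Since that column contains only one vertex, $w$ is a nonzero integer multiple of $\delta_1^{j-k}$, and hence $\delta_1^{j-k} \in \mathrm{Mon}(\delta_1^j)$.

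Setting $r' = \gcd(j-k, d)$, a direct divisibility check gives $\gcd(r, r') \mid k$, so $\gcd(r,r') < r$; therefore the combined span of $\{\delta_1^{nr}\}$ and the newly generated Krylov cycles $\{\delta_1^{nr'}\}$ refines to the cycles indexed by multiples of $g = \gcd(r, r') < r$, and the induction hypothesis applies. After finitely many iterations the effective gcd drops to $1$, giving $\mathrm{Mon}(\delta_1^j) = H_1(f^{-1}(b), \Q)$. The main obstacle I foresee is verifying that the $k-1$ intermediate Picard--Lefschetz moves through the merged pairs $C_{j-l} = C_{j+l}$ do not introduce a cancellation killing the coefficient of $\delta_1^{j-k}$ in $w$; this should be manageable because every nonzero entry of $\Psi_2$ equals $\pm 1$ and links only strictly adjacent columns, so the iterated support stays inside a narrow band around column $j$, and the final asymmetry at $C_{j-k} \neq C_{j+k}$ isolates $\delta_1^{j-k}$ cleanly.
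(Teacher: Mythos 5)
Your proposal is correct and takes essentially the same route as the paper: the paper's entire proof of this proposition is the remark that one reruns the argument of Proposition \ref{mainPropDynkin} with the $1\times 1$ matrices $m_1=-1$, $m_2=1$, which is exactly what you do (base case via Proposition \ref{freevanishingcycles} and Lemma \ref{monodromysubgroup}, then descent to column $j-k$ using the first asymmetric pair of critical values). Your extra care in the descent step, replacing the paper's bare assertion $r'<r$ by an argument through $\gcd(r,r')$, is a refinement of detail rather than a different approach.
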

\begin{proof}
	In this case the matrices are $m_1=-1$ and $m_2=1$. 
\end{proof}
The next theorem is the main result in \cite{ChMonodromy}, it is a solution of the monodromy problem for hyperelliptic curves $y^2+g(x)$. We will use it, in order to solve the monodromy problem for $y^3+g(x)$ and $y^4+g(x)$. Although, this theorem holds for $g\in \C[x]$, we are interested in the case of $g$ being a real polynomial with real critical points.
\begin{theorem}[C. Christopher and P. Marde$\check{s}$i\'c, 2008]
	\label{ChThm}
	Let $f(x,y)=y^2+g(x)$, and let $\delta(t)$ be an associated vanishing cycle at a Morse point; then one of the following assertions holds.
	\begin{enumerate}
		\item The monodromy of $\delta(t)$  generates the whole homology $H_1(f^{-1}(b), \Q)$.
		\item The polynomial $g$ is decomposable (i.e., $g=g_2\circ g_1$), and $\pi_*\delta(t)$ is homotopic to zero in $\{y^2+g_2(z)=t\}$, where $\pi(x,y)=(g_1(x),y)=(z,y)$.
	\end{enumerate}
\end{theorem}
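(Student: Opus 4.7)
The plan is to assume that conclusion (1) fails and derive conclusion (2). If the monodromy of $\delta(t)$ does not generate $H_1(f^{-1}(b), \Q)$, then by the contrapositive of Proposition \ref{CoroDynkin2} the Dynkin diagram of $y^2 + g(x)$ must exhibit horizontal symmetry with some index $r > 1$. The discussion preceding Proposition \ref{mainPropDynkin} then forces $\delta(t)$ itself to be one of the vanishing cycles with horizontal symmetry, i.e.\ $\delta(t) = \delta^{lr}$ for some $l \in \{1, \ldots, d/r - 1\}$, since otherwise the Picard--Lefschetz orbit of $\delta(t)$ would escape the proper monodromy-invariant subspace spanned by the symmetric cycles.

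The heart of the proof is to promote this combinatorial symmetry of the critical values of $g$ to a genuine algebraic decomposition $g = g_2 \circ g_1$ with $\deg g_1 = r$. Horizontal symmetry postulates $C_{j-k} = C_{j+k}$ at every column $j$ with $\gcd(j,d) = r$, so the critical values of $g$, read along the chosen distinguished path arrangement on the real line, are palindromic in blocks of width $2r-1$. I would interpret this as saying that each critical value of a putative outer polynomial $g_2$ is attained by $g$ at exactly $r$ real critical points that are grouped and permuted by a degree-$r$ inner polynomial $g_1$; a Ritt-type decomposition result for real polynomials whose critical data exhibit this periodicity then yields $g = g_2 \circ g_1$ with $g_1, g_2 \in \R[x]$ and $\deg g_1 = r$.

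With the decomposition in hand, the final verification is routine. The $0$-vanishing cycle $\sigma_{lr}$ underlying $\delta(t) = \delta^{lr}$ is the difference $p^+ - p^-$ of two real points that collide at a critical point of $g$ with critical value $C_{lr}$; because $r \mid lr$, both points lie over the same value of $g_1$, so $g_1(p^+) = g_1(p^-)$. Hence $\pi_*\sigma_{lr} = 0$ in $H_0(g_2^{-1}(b), \Z)$, and by naturality of the join construction $\pi_*\delta(t) = 0$ in $H_1(\{y^2 + g_2(z) = t\}, \Z)$, which is exactly conclusion (2).

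The main obstacle is the middle step: extracting an honest algebraic factorisation of $g$ from the purely combinatorial horizontal symmetry of the Dynkin diagram. The diagram only remembers equalities among critical values of $g$ and gives no direct access to the polynomial structure, so producing $g_1$ requires invoking (or reproving) a functional decomposition theorem for real polynomials whose critical set displays this specific periodic pattern. The hypothesis that every critical point of $g$ is real is essential, as it rigidly positions the critical values on the line and allows the combinatorial palindrome to be upgraded to an analytic symmetry of $g$.
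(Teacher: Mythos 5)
First, a point of comparison: the paper does not prove this statement at all --- it is quoted verbatim as the main theorem of Christopher and Marde\v{s}i\'c \cite{ChMonodromy} and used as a black box (indeed, Proposition \ref{pullbackhorsymm}, which asserts that horizontal symmetry of the Dynkin diagram is \emph{equivalent} to decomposability of $g$, is itself deduced \emph{from} Theorem \ref{ChThm}). So there is no internal proof to measure your attempt against, and any proof you supply must be self-contained; in particular you may not quietly borrow the implication ``horizontal symmetry $\Rightarrow$ $g$ decomposable'' from Proposition \ref{pullbackhorsymm} without circularity.

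That implication is exactly where your argument has a genuine gap, and you acknowledge it yourself: the passage from the palindromic pattern $C_{j-k}=C_{j+k}$ of critical values to an honest factorisation $g=g_2\circ g_1$ is the entire content of the hard direction of the theorem, and invoking an unnamed ``Ritt-type decomposition result for real polynomials whose critical data exhibit this periodicity'' is not a proof. Equalities among critical values do not by themselves force decomposability; Christopher and Marde\v{s}i\'c obtain the factorisation from \emph{imprimitivity of the monodromy group} of $g$ acting on a generic fibre $g^{-1}(b)$ (the orbit of the $0$-cycle failing to span forces a nontrivial block system), and only then apply Ritt's theorem --- that group-theoretic input is absent from your sketch. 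Three further points need repair even granting the factorisation. (i) The contrapositive of Proposition \ref{CoroDynkin2} only tells you that the diagram has \emph{some} horizontal symmetry; your claim that $\delta(t)$ must itself be one of the symmetric cycles $\delta^{lr}$ (``otherwise the orbit would escape the invariant subspace'') does not follow --- escaping a proper invariant subspace is weaker than generating all of $H_1(f^{-1}(b),\Q)$, so non-symmetric cycles in a symmetric diagram require a separate argument (compare the case analysis in the proof of Proposition \ref{mainPropDynkin}). (ii) The final verification that $g_1(p^+)=g_1(p^-)$ holds only if the critical point at which $\sigma_{lr}$ vanishes is a critical point of $g_1$, not merely a preimage of a critical point of $g_2$; identifying the symmetric columns with the critical points of $g_1$ is part of what must be proved. (iii) With the paper's conventions one gets $\deg g_1=d/r$ and $\deg g_2=r$, not $\deg g_1=r$.
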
 	
From this theorem, we can show that the condition of $g$ being decomposable as $g=g_2\circ g_1$, is equivalent to the Dynkin diagram associated to $y^e+g(x)$ has horizontal symmetry, with $e>1$. In fact, as we mentioned above, the horizontal symmetry condition only depend on $g$. 
\begin{proposition}
	\label{pullbackhorsymm}
	The next assertions are equivalents.
	\begin{enumerate}
		\item The polynomial  can be written as $g=g_2\circ g_1$, where $g_1, g_2$ are polynomials such that $\deg (g_1), \deg(g_2)>1$.
		\item The Dynkin diagram associated to $y^e+g(x)$ has horizontal symmetry, for some $e>1$ (and hence for all $e>1$).
	\end{enumerate}
\end{proposition}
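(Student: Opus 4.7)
The plan is to reduce the equivalence to the hyperelliptic case $e=2$ and then combine Theorem~\ref{ChThm} with Proposition~\ref{CoroDynkin2}. The key reduction, already pointed out in the text, is that horizontal symmetry is a condition on the ordered sequence of critical values of $g$ alone and does not involve $e$: the statement ``the Dynkin diagram of $y^e+g$ has horizontal symmetry'' is literally the same assertion for every $e>1$. This immediately justifies the parenthetical ``(and hence for all $e>1$)'' and lets me work only with $e=2$.

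For $(2)\Rightarrow(1)$: suppose the Dynkin diagram of $y^2+g(x)$ has horizontal symmetry with parameter $r>1$, and let $\delta^j$ be a vanishing cycle with horizontal symmetry (so $\gcd(j,d)=r$). The combinatorial remark made right after Definition~\ref{def:horsym}, together with the Picard-Lefschetz formula~\eqref{PLformula}, shows that the monodromy orbit of $\delta^j$ stays inside the proper subspace spanned by the other cycles with horizontal symmetry and by the sums $\delta^{j-k}+\delta^{j+k}$. Hence $\text{Mon}(\delta^j)\neq H_1(f^{-1}(b),\Q)$, and Theorem~\ref{ChThm} forces case~2 for $\delta^j$, giving a decomposition $g=g_2\circ g_1$.

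For $(1)\Rightarrow(2)$: write $g=g_2\circ g_1$ with $r:=\deg g_1>1$ and $s:=\deg g_2>1$, and consider the map $\pi(x,y)=(g_1(x),y)$, which realizes $\{y^2+g(x)=t\}$ as a degree-$r$ branched cover of $\{y^2+g_2(z)=t\}$. The strategy is to exhibit a vanishing cycle lying in $\ker\pi_*$, a monodromy-invariant subspace which is proper by a rank comparison ($d-1>s-1$). For any critical point $a_i$ of $g_1$, the point $(a_i,0)$ is a Morse critical point of $y^2+g$ with critical value $g_2(\alpha_i)$, where $\alpha_i:=g_1(a_i)$. Because $g$ is Morse at $a_i$, one has $g''(a_i)=g_2'(\alpha_i)\,g_1''(a_i)\neq 0$, so in particular $g_2'(\alpha_i)\neq 0$ and $(\alpha_i,0)$ is a regular point of the target fibration $y^2+g_2(z)$. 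The vanishing cycle $\delta_i$ at $(a_i,0)$ is a small loop in a nearby source fiber, and $\pi$ maps it to a small loop near the regular point $(\alpha_i,0)$ in a smooth target fiber; such a loop bounds a disk, so $\pi_*\delta_i=0$. Then $\text{Mon}(\delta_i)\subseteq\ker\pi_*\subsetneq H_1(f^{-1}(b),\Q)$, and the contrapositive of Proposition~\ref{CoroDynkin2} produces horizontal symmetry.

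The only real obstacle is the geometric claim $\pi_*\delta_i=0$; the Morse condition on $g$ is precisely what guarantees that the image point is regular in the target fibration and forces the image cycle to bound a disk. Everything else is formal and invokes statements already proved in the paper.
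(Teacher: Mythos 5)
Your direction $(2)\Rightarrow(1)$ and your reduction to $e=2$ coincide with the paper's argument (horizontal symmetry is a condition on the critical values of $g$ alone, a cycle with horizontal symmetry has proper monodromy orbit by the remark after Definition~\ref{def:horsym}, and Theorem~\ref{ChThm} then forces decomposability). Your direction $(1)\Rightarrow(2)$ is genuinely different: the paper disposes of it by a direct combinatorial computation, checking that the critical values of $g_2\circ g_1$ fall into the symmetric pattern $C_{j-k}=C_{j+k}$ demanded by Definition~\ref{def:horsym}; you instead exhibit a vanishing cycle killed by the pushforward $\pi_*$ of the covering $(x,y)\mapsto(g_1(x),y)$ and then invoke the contrapositive of Proposition~\ref{CoroDynkin2}. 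Your route is more geometric and explains \emph{why} the symmetry appears, and your local argument that $\pi_*\delta_i=0$ (via $g''(a_i)=g_2'(\alpha_i)g_1''(a_i)\neq 0$, so the image point is regular and the image loop lies in a disk of the target fiber) is correct. The trade-off is that the paper's computation identifies the symmetry explicitly and is independent of Proposition~\ref{CoroDynkin2}, whereas your argument makes the whole equivalence logically dependent on the eigenvalue analysis behind Proposition~\ref{freevanishingcycles} (harmless for $e=2$, where that analysis is in closed form, but a heavier dependency).

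Two points need tightening. First, your justification that $\ker\pi_*$ is \emph{proper} is the wrong inequality: $d-1>s-1$ shows only that $\ker\pi_*$ is nonzero, not that it is a proper subspace. Properness requires $\pi_*\neq 0$, which follows from the transfer identity $\pi_*\pi^*=\deg(g_1)\cdot\mathrm{id}$ (so $\pi_*$ is surjective over $\Q$) together with $\mathrm{rank}\,H_1(\{y^2+g_2=t\},\Q)=\deg(g_2)-1\geq 1$; this is where $\deg(g_2)>1$ is actually used. Second, you assert but do not justify that $\ker\pi_*$ is monodromy-invariant; this needs the observation that every critical value of $y^2+g_2(z)$ is a critical value of $y^2+g(x)$, so that loops in the complement of $C_g$ also avoid $C_{g_2}$ and $\pi_*$ intertwines the two monodromy actions. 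Both repairs are one-liners, so the proof stands once they are inserted.
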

\begin{proof}From Proposition \ref{CoroDynkin2}, follows that the condition 2 implies that there are vanishing cycles for the fibration $f(x,y)=y^2+g(x)$, such that they do not generates  the whole $H_1(f^{-1}(b))$. Thus, by the Theorem \ref{ChThm}, we conclude that 2 implies 1. The other implication  follows by a direct computation on a Dynkin diagram similar to  \ref{Dynkindiagramrealcriticalvalues}, but  with $e-1$ rows. 
\end{proof}
Although the horizontal symmetry is just a condition on the polynomial $g$, this proposition allows to extend  the result in the Theorem \ref{ChThm} to the fibrations defined by $y^4+g(x)$ and $y^3+g(x)$. The non trivial part for this generalization are due to the   Propositions \ref{mainPropDynkin} and \ref{CoroDynkin3}.  However, since  Proposition \ref{freevanishingcycles} is  numerically proven, we have restrictions in the degree of $g$. 	 
\begin{theorem}
	\label{Main2S3}
	Let $g$ be a polynomial with real critical points,  and degree $d$ such that,  $4\nmid d$ and $d\leq 100$. Consider the polynomial  $f(x,y)=y^4+g(x)$ , and let $\delta(t)$ be an associated vanishing cycle at a Morse point; then one of the following assertions holds.
	\begin{enumerate}
		\item The monodromy of $\delta(t)$ generates the homology $H_1(f^{-1}(t), \Q)$.
		\item The polynomial $g$ is decomposable (i.e., $g=g_2\circ g_1$), and $\pi_*\delta(t)$ is homotopic to zero in $\{y^4+g_2(z)=t\}$, where $\pi(x,y)=(g_1(x),y)=(z,y)$. Or,
		the cycle $\pi_*\delta(t)$ is homotopic to zero in $\{z^2+g(x)=t\}$, where $\pi(x,y)=(x,y^2)=(x,z)$.
	\end{enumerate}
\end{theorem}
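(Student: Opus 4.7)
The plan is to case-split on whether the Dynkin diagram of $f(x,y) = y^4 + g(x)$ possesses horizontal symmetry in the sense of Definition \ref{def:horsym}; each side of the split reduces to one of the two conclusions of the theorem.

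\textbf{No horizontal symmetry.} In this situation I invoke Proposition \ref{mainPropDynkin}. Writing $\delta(t) = \delta_i^j$, if $i \in \{1, 3\}$ the monodromy orbit of $\delta$ is all of $H_1(f^{-1}(t), \Q)$, which is alternative (1). If $i = 2$, the orbit contains every $\delta_2^k$ for $k = 1, \ldots, d-1$; by Lemma \ref{ciclovertical} each such cycle lies in the kernel of $\pi_*$ for $\pi(x, y) = (x, y^2)$, so $\pi_* \delta(t) = 0$ in $H_1(\{z^2 + g(x) = t\}, \Q)$, and we are in the second sub-alternative of (2).

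\textbf{Horizontal symmetry.} Here Proposition \ref{pullbackhorsymm} provides a decomposition $g = g_2 \circ g_1$ with both factors of positive degree, and I set $\pi(x,y) = (g_1(x), y)$. The central claim is that every $\delta = \delta_i^j$ either generates $H_1$ under monodromy or satisfies $\pi_* \delta = 0$ in $H_1(\{y^4 + g_2(z) = t\})$. By the join-cycle decomposition $\delta_i^j = \gamma_i * \sigma_j$ of Section \ref{dynkinrules}, the horizontal-symmetry cycles $\delta_i^{lr}$ sit above critical points of $g$ coming from ramification of $g_1$ (i.e.\ where $g_1'$ vanishes); at such a point, the two sheets of $\sigma_{lr}$ coalesce under $g_1$, forcing $(g_1)_* \sigma_{lr} = 0$ in $H_0$ and hence $\pi_* \delta_i^{lr} = 0$. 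For the remaining (non-horizontal-symmetry) cycles, I plan to re-run the inductive argument of Proposition \ref{mainPropDynkin} within the reduced Dynkin sub-diagram obtained after quotienting by the horizontal symmetry: rows $1$ and $3$ generate the complementary part of $H_1$, while row $2$ falls back on the push-forward argument of Case A via Lemma \ref{ciclovertical}.

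\textbf{Main obstacle.} The hard part will be the push-forward step in the horizontal-symmetry case, specifically the verification that precisely the horizontal-symmetry join cycles lie in $\ker \pi_*$, together with showing that the non-symmetric cycles escape this kernel and fill $H_1$ under monodromy. I intend to leverage Theorem \ref{ChThm} (the Christopher--Marde$\check{s}$i\'c result for hyperelliptic curves $y^2 + g$), which already matches horizontal symmetry of the $y^2$-diagram with the pullback from $\{z^2 + g_2(w) = t\}$; the $y^4$ statement then transfers by composing with the two-to-one map $y \mapsto y^2$ of Lemma \ref{ciclovertical}. The degree restriction $d \leq 100$ is inherited from Proposition \ref{freevanishingcycles} via Proposition \ref{mainPropDynkin} and propagates to the final statement.
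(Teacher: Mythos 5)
Your architecture coincides with the paper's: split on whether the Dynkin diagram of $y^4+g(x)$ has horizontal symmetry, handle the non-symmetric case with Proposition \ref{mainPropDynkin} plus Lemma \ref{ciclovertical} (inheriting $d\leq 100$, $4\nmid d$ from Proposition \ref{freevanishingcycles}), and handle the symmetric case with Proposition \ref{pullbackhorsymm} plus a push-forward argument. That part is fine. Your direct geometric justification that a horizontal-symmetry join cycle dies under $\pi(x,y)=(g_1(x),y)$ --- the two points of the $0$-cycle $\sigma_{lr}$ vanishing at a zero of $g_1'$ are identified by $g_1$, so $(g_1)_*\sigma_{lr}=0$ and hence $\pi_*(\gamma_i*\sigma_{lr})=0$ --- is a reasonable, somewhat more self-contained substitute for the paper's route, which instead passes through the correspondence with the $y^2+g$ diagram and Theorem \ref{ChThm}.

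The genuine gap is exactly what you flag as the ``main obstacle,'' and it is not a finishing touch: when the diagram has horizontal symmetry you must still dispose of the cycles $\delta_i^j$ with $i\in\{1,3\}$ and $r\nmid j$. These lie in neither kernel, so for them the theorem requires alternative (1), i.e.\ that their monodromy orbit be \emph{all} of $H_1(f^{-1}(t),\Q)$. Your stated target --- that rows $1$ and $3$ ``generate the complementary part of $H_1$'' after quotienting by the symmetry --- is strictly weaker than what is needed; if such a cycle generated only a complement of the symmetric subspace, neither alternative would hold and the theorem would be false for it. (Already in the model case $y^2+(x^2-1)^2$ the orbit of a non-symmetric cycle absorbs the symmetric cycle via the Picard--Lefschetz twist around its critical value, so the correct claim is full generation, not generation modulo symmetry.) The paper closes this case by reading the contrapositive of Proposition \ref{mainPropDynkin} as forcing either $i=2$ or the \emph{cycle itself} (not merely the diagram) to have horizontal symmetry; whatever one makes of that reading, your proposal leaves the step entirely open, so the horizontal-symmetry case is incomplete as written. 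A minor further remark: the paper also verifies the converse (condition (2) forces the orbit to be proper, by direct computation in the Dynkin diagram); your proposal omits this, which is acceptable for the statement as literally phrased but loses the dichotomy the paper actually establishes.
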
 
\begin{proof}
	The restrictions on the degree $d$  are due to the Proposition \ref{freevanishingcycles}, which is used in proposition \ref{mainPropDynkin}. Let $\delta_i^j:=\delta(t)$ be a vanishing cycle. If the monodromy of $\delta_i^j$ does not generate the homology $H_1(f^{-1}(t), \Q)$, then  considering the contrapositive of Proposition \ref{mainPropDynkin}, we have the next possibilities: The index $i$ is $2$ or the cycle $\delta_i^j$ has horizontal symmetry. If $i=2$, then by the Lemma \ref{ciclovertical}, we have that $\pi_*\delta_2^j$ is trivial, where $\pi(x,y)=(x,y^2)$. 	If $\delta_i^j$ has horizontal symmetry, then by using Proposition \ref{pullbackhorsymm} we conclude that $g=g_2\circ g_1$. Furthermore, $\delta_i^j$  is in correspondence with a cycle with horizontal symmetry in the Dynkin diagram of $y^2+g(x)$. Consequently,  $\delta_i^j$ is in the kernel of $\pi_*$, where $\pi(x,y)=(g_1(x),y)$.

	On the other hand, if the condition 2 is true, then the vanishing cycle  $\delta_i^j$ has  vertical or horizontal symmetry. In any of these cases, it follows  by direct computation in the Dynkin diagram \ref{Dynkindiagramrealcriticalvalues},
	that  the subspace generated by the orbit of the monodromy action on $\delta_i^j$ is different to $H_1(f^{-1}(t), \Q)$.
\end{proof}
We have an analogous result for degree $y^3+g(x)$.  Note that  in this case there are not cycles with vertical symmetry. 
\begin{theorem}
	\label{Main1S3}	
	Let $g$ be a polynomial with real critical points,  and degree $d$ such that,  $3\nmid d$ and $d\leq 100$. Consider the polynomial  $f(x,y)=y^3+g(x)$ , and let $\delta(t)$ be an associated vanishing cycle at a Morse point; then one of the following assertions holds.
	\begin{enumerate}
		\item The monodromy of $\delta(t)$ generates the homology $H_1(f^{-1}(t), \Q)$.
		\item The polynomial $g$ is decomposable (i.e., $f=g_2\circ g_1$), and $\pi_*\delta(t)$ is homotopic to zero in $\{y^3+g_2(z)=t\}$, where $\pi(x,y)=(g_1(x),y))=(z,y)$.
	\end{enumerate}
\end{theorem}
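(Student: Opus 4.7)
The plan is to follow the template of Theorem \ref{Main2S3} but with a crucial simplification: because the Dynkin diagram (\ref{dynkindeltaij}) for $y^3+g(x)$ has only two rows, every vanishing cycle has the form $\delta_1^j$ or $\delta_2^j$, and there is no analogue of a middle row producing vertical symmetry. Consequently, Lemma \ref{ciclovertical} and the pullback by $y \mapsto y^2$ drop out, leaving only the horizontal-symmetry / decomposability dichotomy.

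First, let $\delta=\delta_i^j$ be the vanishing cycle corresponding to $\delta(t)$, where $i\in\{1,2\}$ and $j\in\{1,\dots,d-1\}$. Assume that $\text{Mon}(\delta)\neq H_1(f^{-1}(t),\Q)$. By the contrapositive of Proposition \ref{CoroDynkin3}, the Dynkin diagram of $y^3+g(x)$ must have horizontal symmetry. Applying Proposition \ref{pullbackhorsymm} (which states that horizontal symmetry for some $e>1$ is equivalent to $g$ being decomposable, and hence holds for all $e>1$), we obtain a factorization $g=g_2\circ g_1$ with $\deg(g_1),\deg(g_2)>1$. This gives the first half of conclusion (2).

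Second, I would identify $\delta_i^j$ with a vanishing cycle possessing horizontal symmetry in the Dynkin diagram of $y^2+g(x)$ via the shared combinatorial structure (the rows are identical and the horizontal-symmetry condition depends only on $g$). By Theorem \ref{ChThm} applied to $y^2+g(x)$, such a cycle lies in the kernel of the pushforward induced by $\pi(x,y)=(g_1(x),y)$ to $\{y^2+g_2(z)=t\}$. The same algebraic identification of pairs of critical points of $g$ with a single critical point of $g_2$, together with the fact that the join-cycle construction is compatible with $\pi$ in the $x$-coordinate, transfers this conclusion from $y^2+g(x)$ to $y^3+g(x)$: $\pi_*\delta(t)$ is homotopic to zero in $\{y^3+g_2(z)=t\}$.

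For the converse direction, assume condition (2) holds. Then $\delta_i^j$ is a cycle with horizontal symmetry, and a direct Picard-Lefschetz computation on the Dynkin diagram (exactly as in the observation following Definition \ref{def:horsym}) shows that the monodromy orbit of $\delta$ is contained in the proper subspace spanned by cycles with horizontal symmetry together with the symmetric sums $\delta_i^{j-k}+\delta_i^{j+k}$. Hence $\text{Mon}(\delta)\neq H_1(f^{-1}(t),\Q)$.

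The main obstacle is making precise the geometric identification in the second step, i.e.\ that a cycle with horizontal symmetry in the Dynkin diagram of $y^3+g(x)$ really does pushforward trivially under $\pi(x,y)=(g_1(x),y)$. I would handle this by noting that the horizontal-symmetry condition is purely a condition on the $0$-dimensional vanishing cycles $\sigma_j$ of $g$ (the symmetry $C_{j-k}=C_{j+k}$), and that these are precisely the $0$-cycles collapsed by $g_1$; the join construction $\gamma_i*\sigma_j$ then inherits this triviality in any fibration $y^e+g(x)$ regardless of $e$, so the argument of Theorem \ref{ChThm} applies verbatim with $y^3$ in place of $y^2$. The restriction $d\leq 100,\ 3\nmid d$ is inherited from Proposition \ref{freevanishingcycles} through Proposition \ref{CoroDynkin3}.
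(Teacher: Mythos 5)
Your proposal follows essentially the same route as the paper: the contrapositive of Proposition \ref{CoroDynkin3} yields horizontal symmetry, Proposition \ref{pullbackhorsymm} yields the decomposition $g=g_2\circ g_1$, and the kernel statement and converse are handled exactly as in the paper's proof of Theorem \ref{Main2S3} (transfer to $y^2+g(x)$ via Theorem \ref{ChThm}, and the Picard--Lefschetz computation following Definition \ref{def:horsym}). Your write-up is in fact slightly more complete than the paper's own proof of this theorem, which omits the kernel claim and the converse and refers implicitly to the $y^4$ case; your observation that the absence of a middle row removes the vertical-symmetry branch matches the paper's remark preceding the theorem.
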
 
\begin{proof}
	The restrictions on the degree $d$  are due to the Proposition \ref{freevanishingcycles}, which is used in proposition \ref{CoroDynkin3}. 
	Let $\delta_i^j:=\delta(t)$ be a vanishing cycle. If the monodromy of $\delta_i^j$ does not generate the homology $H_1(f^{-1}(t), \Q)$, then  by  Proposition \ref{CoroDynkin3}, the cycle   $\delta_i^j$ has horizontal symmetry. Hence, by using Proposition \ref{pullbackhorsymm} we conclude that $g=g_2\circ g_1$.
\end{proof}

\section{Monodromy problem for 4th degree polynomials \newline $h(y)+g(x)$}
\label{Smonodromydegree4}
Consider $f(x,y)=h(y)+g(x)$ where $h\in\R[y]_{\leq 4}$ and $g\in \R[x]_{\leq 4}$, and $b$ is a regular value. Moreover, we suppose that the critical points of $h$ and $g$ are reals. The aim of this section is to compute the part of the homology $H_1(f^{-1}(b))$ generated by the action of the monodromy. From the equation (\ref{intersectionofjointcycles}) it follows that  the 1-dimensional Dynkin diagram depends on the 0-dimensional Dynkin diagrams of $h$ and $g$. Let $\gamma_i\in H_0(h^{-1}(b), \Z)$ and $\sigma_i\in H_0(g^{-1}(b), \Z)$ be the 0-cycles, where $i=1,2,3$.  Thus, using the enumeration of vanishing cycles, indicated in $\S$ \ref{dynkinrules}, we have the next three cases,
\begin{center}
	\begin{tikzpicture}           
	\matrix (m) [matrix of math nodes, row sep=0.7em,
	column sep=0.7em]{
		\gamma_1&\gamma_3 &\gamma_2\\
	};			
	\path[-stealth]
	(m-1-2) edge [-, densely dashed]   (m-1-1)
	edge [-, densely dashed]	(m-1-3);
	\end{tikzpicture},
	\begin{tikzpicture}           
	\matrix (m) [matrix of math nodes, row sep=0.5em,
	column sep=0.7em]{
		\sigma_2&\sigma_1&\sigma_3\\
	};			
	\path[-stealth]
	(m-1-2) edge [-, densely dashed]   (m-1-1)
	edge [-, densely dashed]   (m-1-3);
	\end{tikzpicture}resulting in: 
	\begin{tikzpicture}           
	\matrix (m) [matrix of math nodes, row sep=0.5em,
	column sep=1em]{
		\gamma_{1}*\sigma_{2}&\gamma_{3}*\sigma_{2}&\gamma_{2}*\sigma_{2}\\
		\gamma_{1}*\sigma_{1}&\gamma_{3}*\sigma_{1}&\gamma_{2}*\sigma_{1}\\
		\gamma_{1}*\sigma_{3}&\gamma_{3}*\sigma_{3}&\gamma_{2}*\sigma_{3}\\
	};			
	\path[-stealth]
	(m-1-2) edge (m-1-1)
	(m-1-2) edge (m-1-3)		
	(m-1-1) edge (m-2-1)
	(m-2-2)	edge (m-2-1)
	(m-3-1) edge (m-2-1)
	(m-2-1)	edge (m-1-2)
	(m-2-1) edge (m-3-2)
	(m-1-2) edge (m-2-2)
	(m-3-2) edge (m-2-2)
	(m-1-3) edge (m-2-3)
	(m-2-2) edge (m-2-3)
	(m-3-3) edge (m-2-3)
	(m-2-3) edge (m-1-2)
	(m-2-3) edge (m-3-2)
	(m-3-2) edge (m-3-1)		
	(m-3-2) edge (m-3-3);
	\end{tikzpicture}
\end{center}
\begin{center}
	\begin{tikzpicture}           
	\matrix (m) [matrix of math nodes, row sep=0.5em,
	column sep=0.7em]{
		\gamma_1&\gamma_3&\gamma_2\\
	};			
	\path[-stealth]
	(m-1-2) edge [-, densely dashed]   (m-1-1)
	edge [-, densely dashed]	(m-1-3);
	\end{tikzpicture},
	\begin{tikzpicture}           
	\matrix (m) [matrix of math nodes, row sep=0.5em,
	column sep=0.7em]{
		\sigma_1&\sigma_3 &\sigma_2\\
	};			
	\path[-stealth]
	(m-1-2) edge [-, densely dashed]  (m-1-1)
	edge [-, densely dashed]  (m-1-3);
	\end{tikzpicture} resulting in:
	\begin{tikzpicture}           
	\matrix (m) [matrix of math nodes, row sep=0.5em,
	column sep=1em]{
		\gamma_{1}*\sigma_{1}&\gamma_{3}*\sigma_{1}&\gamma_{2}*\sigma_{1}\\
		\gamma_{1}*\sigma_{3}&\gamma_{3}*\sigma_{3}&\gamma_{2}*\sigma_{3}\\
		\gamma_{1}*\sigma_{2}&\gamma_{3}*\sigma_{2}&\gamma_{2}*\sigma_{2}\\
	};			
	\path[-stealth]
	(m-2-1) edge (m-1-1)
	(m-1-2) edge (m-1-1)
	(m-1-1) edge (m-2-2)
	(m-2-2) edge (m-1-2)
	(m-2-3) edge (m-1-3)		
	(m-1-2) edge (m-1-3)		
	(m-1-3) edge (m-2-2)
	(m-2-1) edge (m-3-1)
	(m-3-2) edge (m-3-1)
	(m-3-1) edge (m-2-2)
	(m-2-2) edge (m-3-2)
	(m-2-3) edge (m-3-3)		
	(m-3-2) edge (m-3-3)		
	(m-3-3) edge (m-2-2)
	(m-2-2) edge (m-2-1)
	(m-2-2) edge (m-2-3)
	;
	\end{tikzpicture}
\end{center}
\begin{center}
	\begin{tikzpicture}           
	\matrix (m) [matrix of math nodes, row sep=0.7em,
	column sep=0.7em]{
		\gamma_2&\gamma_1& \gamma_3&\\
	};			
	\path[-stealth]
	(m-1-2) edge [-, densely dashed] (m-1-1)
	edge [-, densely dashed]	(m-1-3);
	\end{tikzpicture}, 
	\begin{tikzpicture}           
	\matrix (m) [matrix of math nodes, row sep=0.5em,
	column sep=0.7em]{
		\sigma_2 &\sigma_1& \sigma_3&\\
	};			
	\path[-stealth]
	(m-1-2) edge [-, densely dashed] (m-1-1)
	edge [-, densely dashed] (m-1-3);
	\end{tikzpicture} resulting in:
	\begin{tikzpicture}           
	\matrix (m) [matrix of math nodes, row sep=0.5em,
	column sep=1em]{
		\gamma_{2}*\sigma_{2}&\gamma_{1}*\sigma_{2}&\gamma_{3}*\sigma_{2}\\
		\gamma_{2}*\sigma_{1}&\gamma_{1}*\sigma_{1}&\gamma_{3}*\sigma_{1}\\
		\gamma_{2}*\sigma_{3}&\gamma_{1}*\sigma_{3}&\gamma_{3}*\sigma_{3}.\\
	};			
	\path[-stealth]
	(m-1-1) edge (m-2-1)
	(m-1-1) edge (m-1-2)
	(m-2-2) edge (m-1-1)
	(m-1-2) edge (m-2-2)
	(m-1-3) edge (m-2-3)		
	(m-1-3) edge (m-1-2)		
	(m-2-2) edge (m-1-3)
	(m-3-1) edge (m-2-1)
	(m-3-1) edge (m-3-2)
	(m-2-2) edge (m-3-1)
	(m-3-2) edge (m-2-2)
	(m-3-3) edge (m-2-3)		
	(m-3-3) edge (m-3-2)		
	(m-2-2) edge (m-3-3)
	(m-2-1) edge (m-2-2)
	(m-2-3) edge (m-2-2)
	;
	\end{tikzpicture} 
\end{center}
If we consider $-f$ instead of $f$, the two last Dynkin diagram coincide. Hence, we only focus in the first two 1-dimensional Dynkin diagrams. 
\begin{example}
	\label{exampleh+g}
	Consider the polynomials $h(y)=-y^4+9y^2$ and $g(x)=-x^4+16x^2+8x$. In Figure \ref{hg}, we show the real part of this polynomials with the critical values indexed according  to \S \ref{dynkinrules}. Let $f_1(x,y)=h(y)+g(x)$, thus the Dynkin diagram associated to $f_1$ is the first one.
	\begin{figure}[h!]
		\centering
		\includegraphics[width=0.3\textwidth]{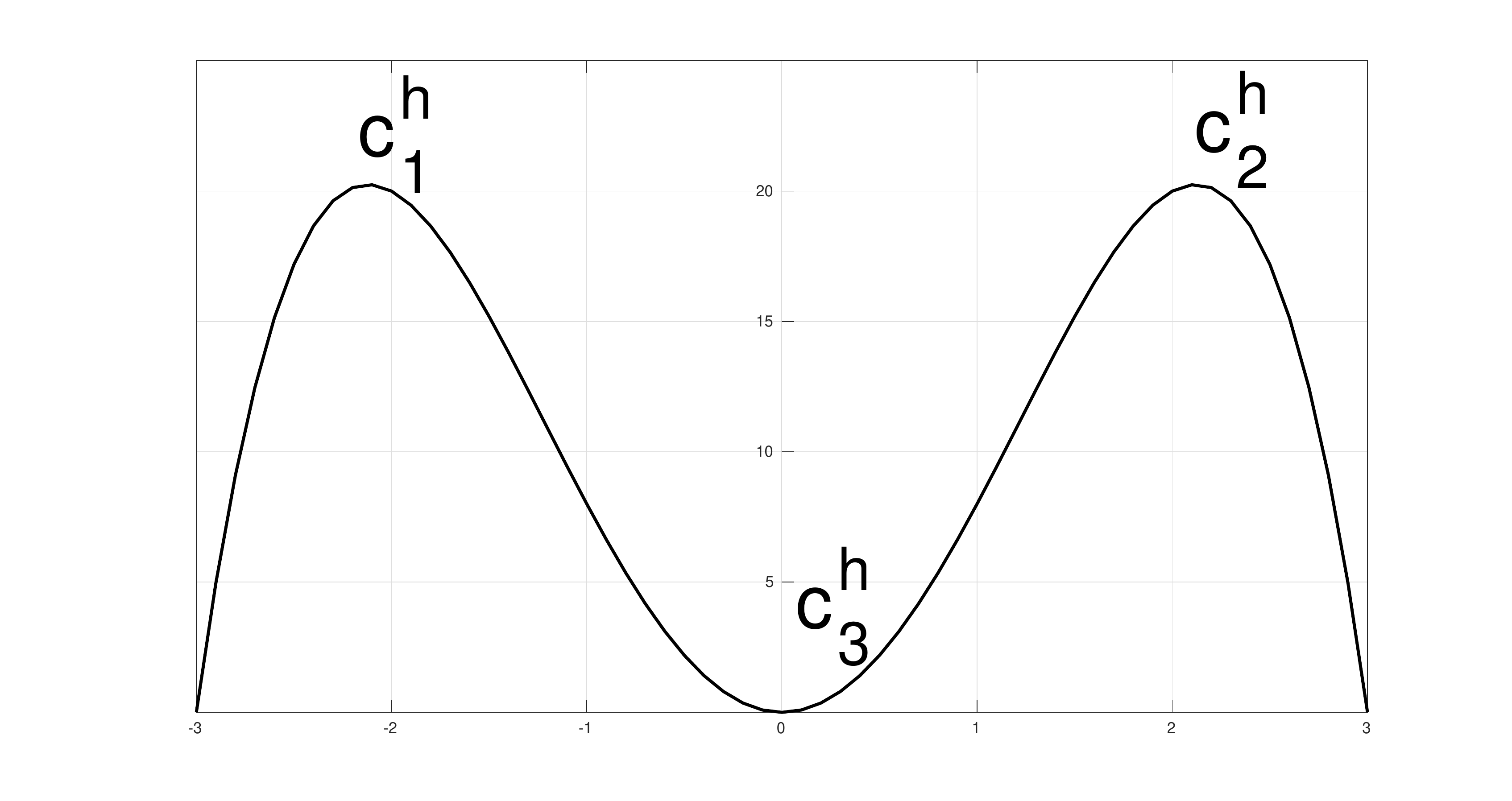}
		\includegraphics[width=0.3\textwidth]{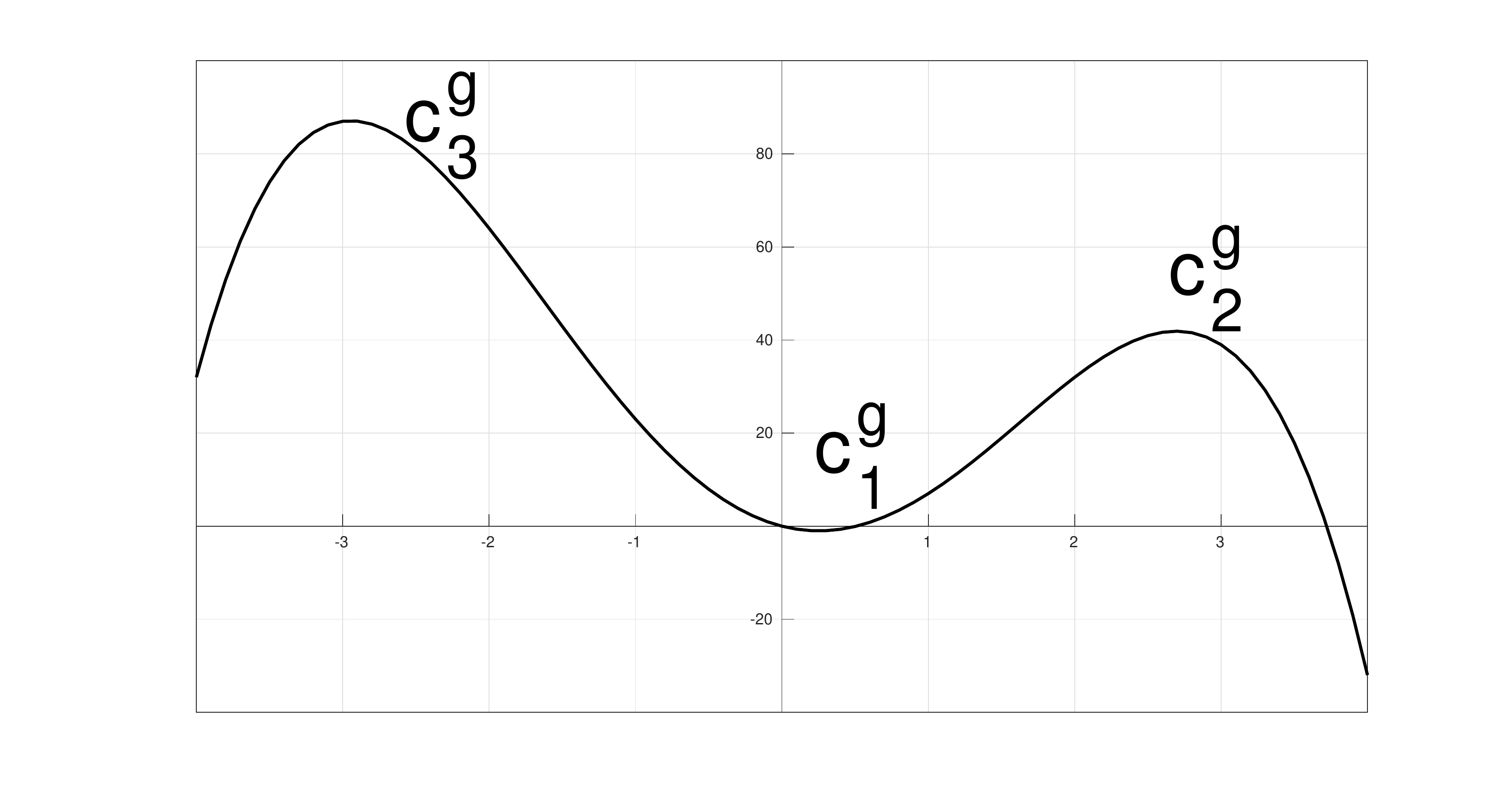}
		\caption{\footnotesize Real part of  the polynomials $h(y)=-y^4+9y^2$ and $g(x)=-x^4+16x^2+8x$, with its critical values. On the left is $h(y)$ and on the right $g(x)$.}
		\label{hg}
	\end{figure} 
\end{example}
\begin{example} 
	\label{exampleh-g}
	Consider the polynomials $h(y)=-y^4+9y^2$ and $g(x)=x^4-16x^2-8x$. In Figure \ref{h-g}, we present the real part of this polynomials with the critical values indexed according  to \S \ref{dynkinrules}. Let $f_2(x,y)=h(y)+g(x)$, thus the Dynkin diagram associated to $f_2$ is the second.
	\begin{figure}[h!]
		\centering
		\includegraphics[width=0.3\textwidth]{h.pdf}
		\includegraphics[width=0.3\textwidth]{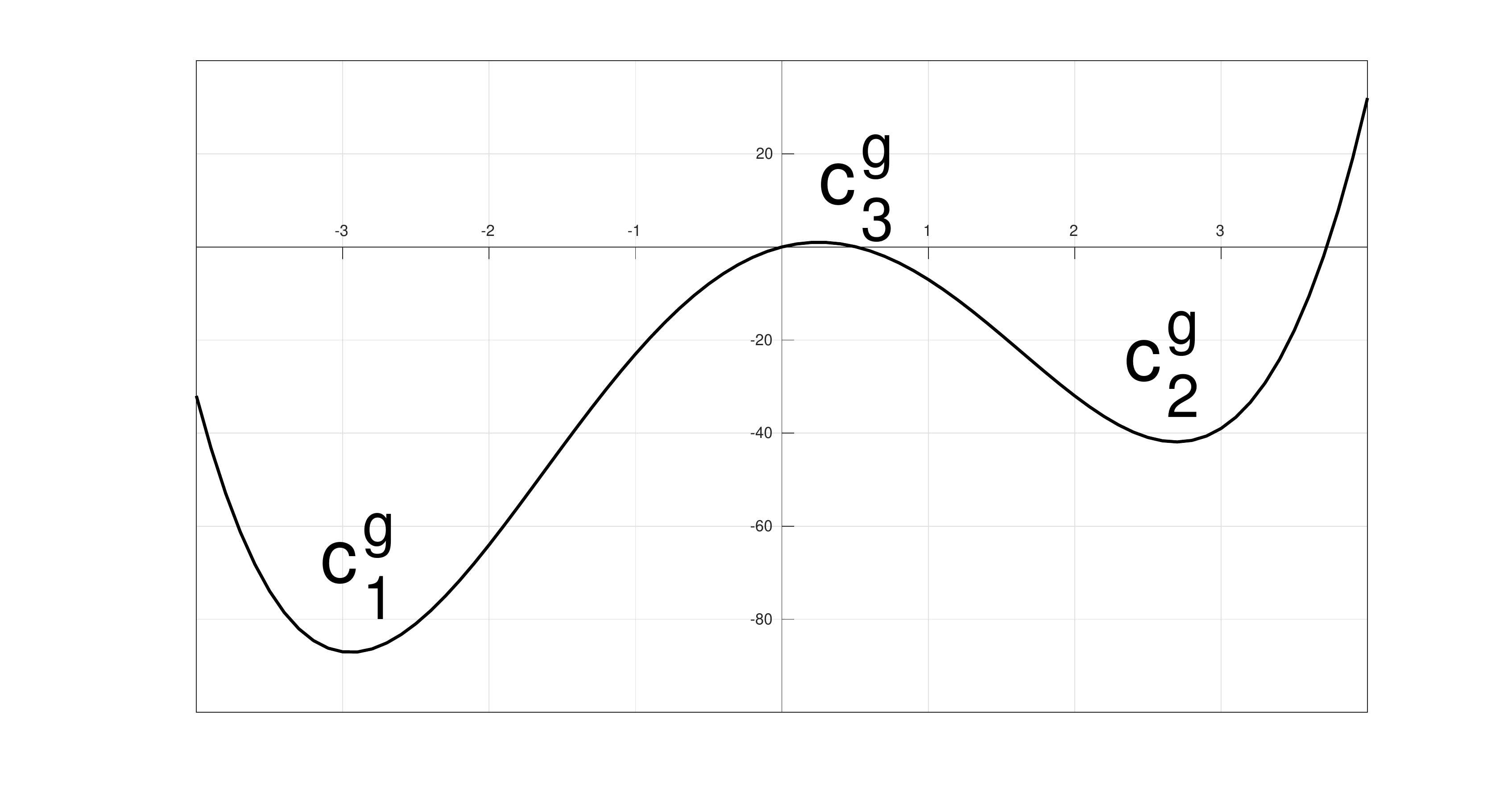}
		\caption{\footnotesize Real part of  the polynomials $h(y)=-y^4+9y^2$ and $g(x)=x^4-16x^2-8x$, with its critical values. On the left is $h(y)$ and on the right $g(x)$.}
		\label{h-g}
	\end{figure}   
\end{example}
In Figures \ref{h+g_3d} and  \ref{h-g_3d}, we present the real part of the fibration $f_1(x,y)=t$ and $f_2(x,y)=t$, respectively. Note that the maximum corresponds with the addition of the maximum of $h$ and $g$, analogously for the minimum. The others critical points are known as saddles points.
\begin{figure}[h!]
	\centering
	\includegraphics[width=0.7\textwidth]{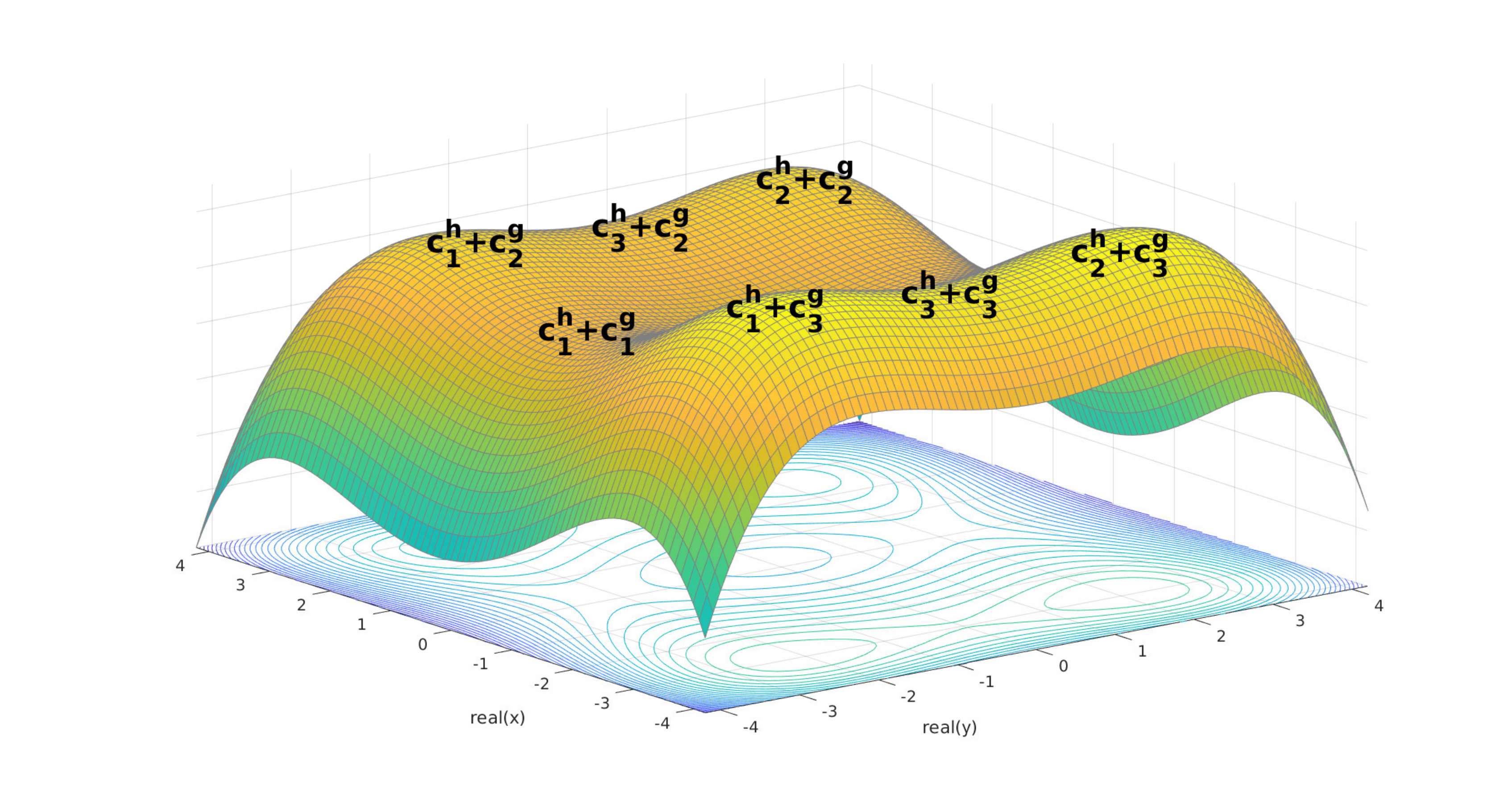}
	\caption{\footnotesize  Real part of graph defined by $f_1(x,y)=-y^4+9y^2-x^4+16x^2+8x=t$, with its critical values.}
	\label{h+g_3d}
\end{figure}
\begin{figure}[h!]
	\centering
	\includegraphics[width=0.7\textwidth]{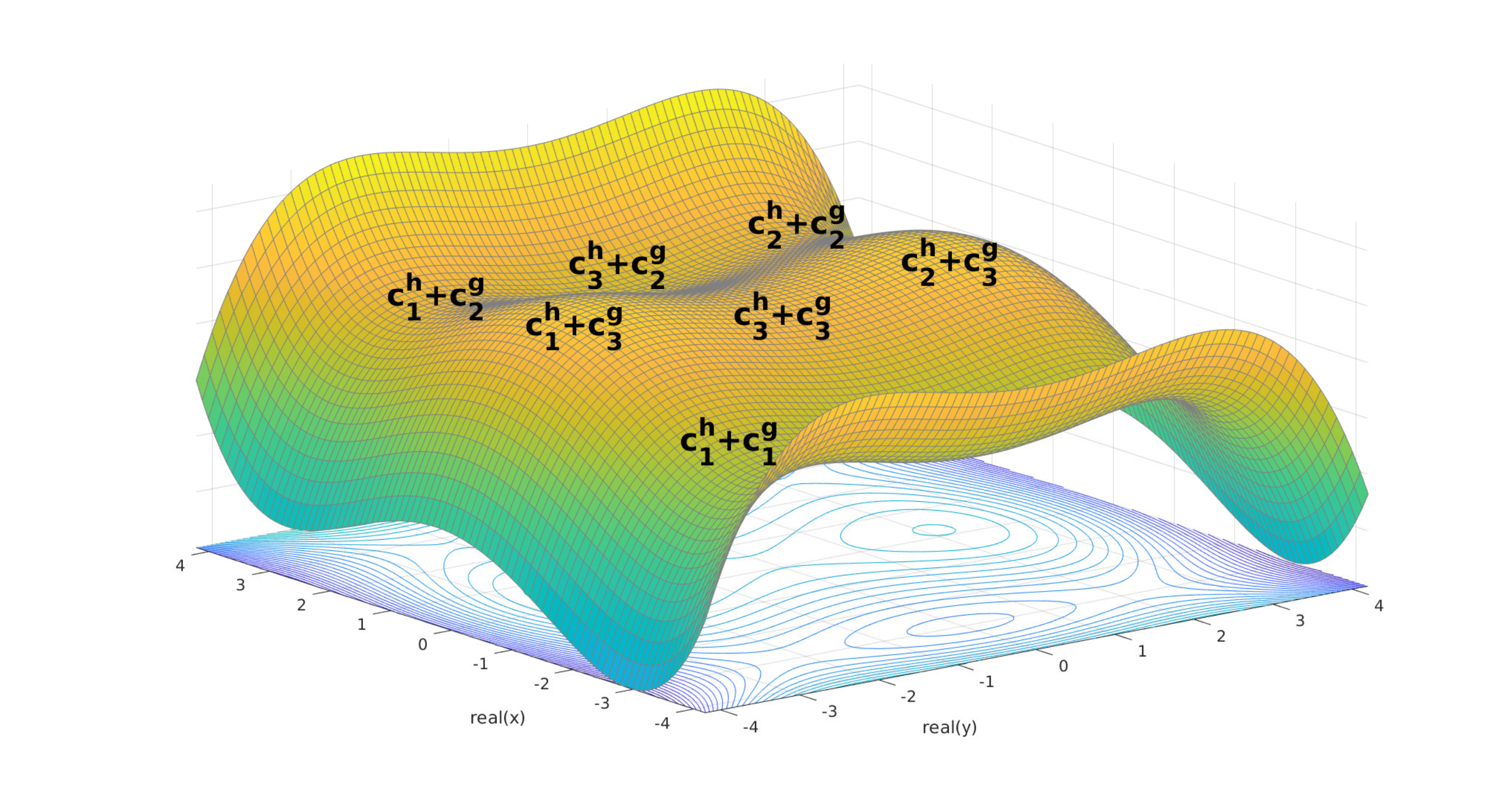}
	\caption{\footnotesize   Real part of graph defined by $f_2(x,y)=-y^4+9y^2+x^4-16x^2-8x=t$, with its critical values.}
	\label{h-g_3d}
\end{figure}

We denote the critical values by
$$a_1=c_1^h+c_1^g\text{ , }a_2=c_1^h+c_2^g \text{ , } a_3=c_1^h+c_3^g$$ 
$$a_4=c_2^h+c_1^g\text{ , }a_5=c_2^h+c_2^g \text{ , } a_6=c_2^h+c_3^g$$ 
$$a_7=c_3^h+c_1^g\text{ , }a_8=c_3^h+c_2^g \text{ , } a_9=c_3^h+c_3^g.$$ 

If we consider the contour lines associated to  the Figure \ref{h+g_3d}, then we obtain a drawing in the plane which represent the vertex in the Dynkin diagram associated to the polynomial $f_1$. In fact, the correspondence between the Dynkin diagram and the curve in the plane is  shown  in \cite{ACampo}. In Figure \ref{h+g_2d}, we present the contour lines of the real part of the polynomial $f_1(x,y)$.  The critical values $a_2, a_3, a_5, a_6$ and $a_7$ correspond with ovals contained in the real part of the foliation defined by $df_1$.

Analogously, the contour lines of the Figure \ref{h-g_3d} give us a drawing in the plane which represent the vertex in the Dynkin diagram associated to $f_2$. In Figure \ref{h-g_2d}, we present the contour lines of the real part of the polynomial $f_2(x,y)$. In this case, the critical values $a_3, a_6, a_7$ and $a_8$ correspond with ovals contained in the real part of the foliation defined by $df_2$.
\begin{figure}[h!]
	\centering
	\includegraphics[width=0.6\textwidth]{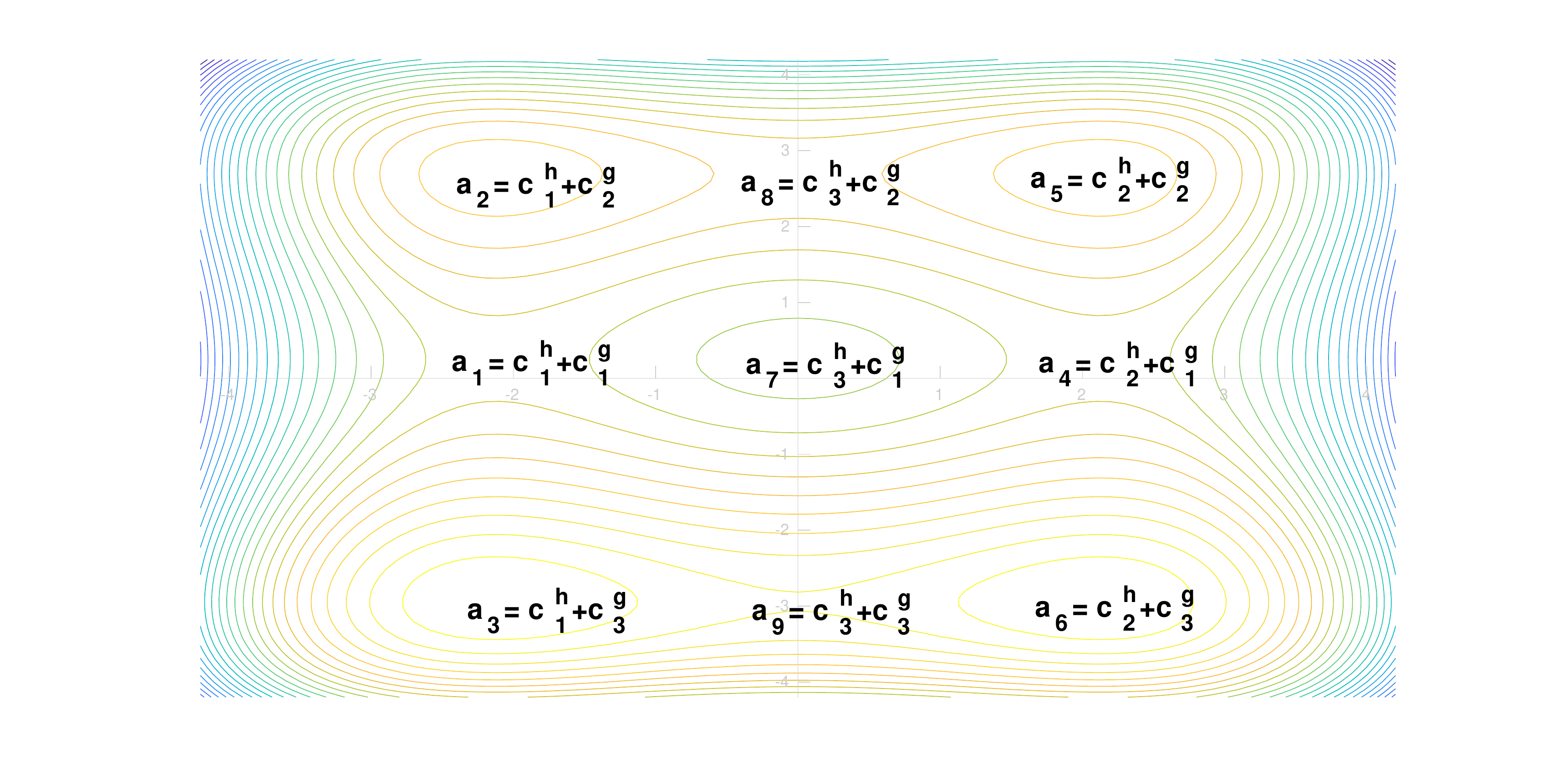}
	\caption{\footnotesize  Contour lines of the real part of  $f_1(x,y)=-y^4+9y^2-x^4+16x^2+8x=t$. }
	\label{h+g_2d}
\end{figure}
\begin{figure}[h!]
	\centering
	\includegraphics[width=0.6\textwidth]{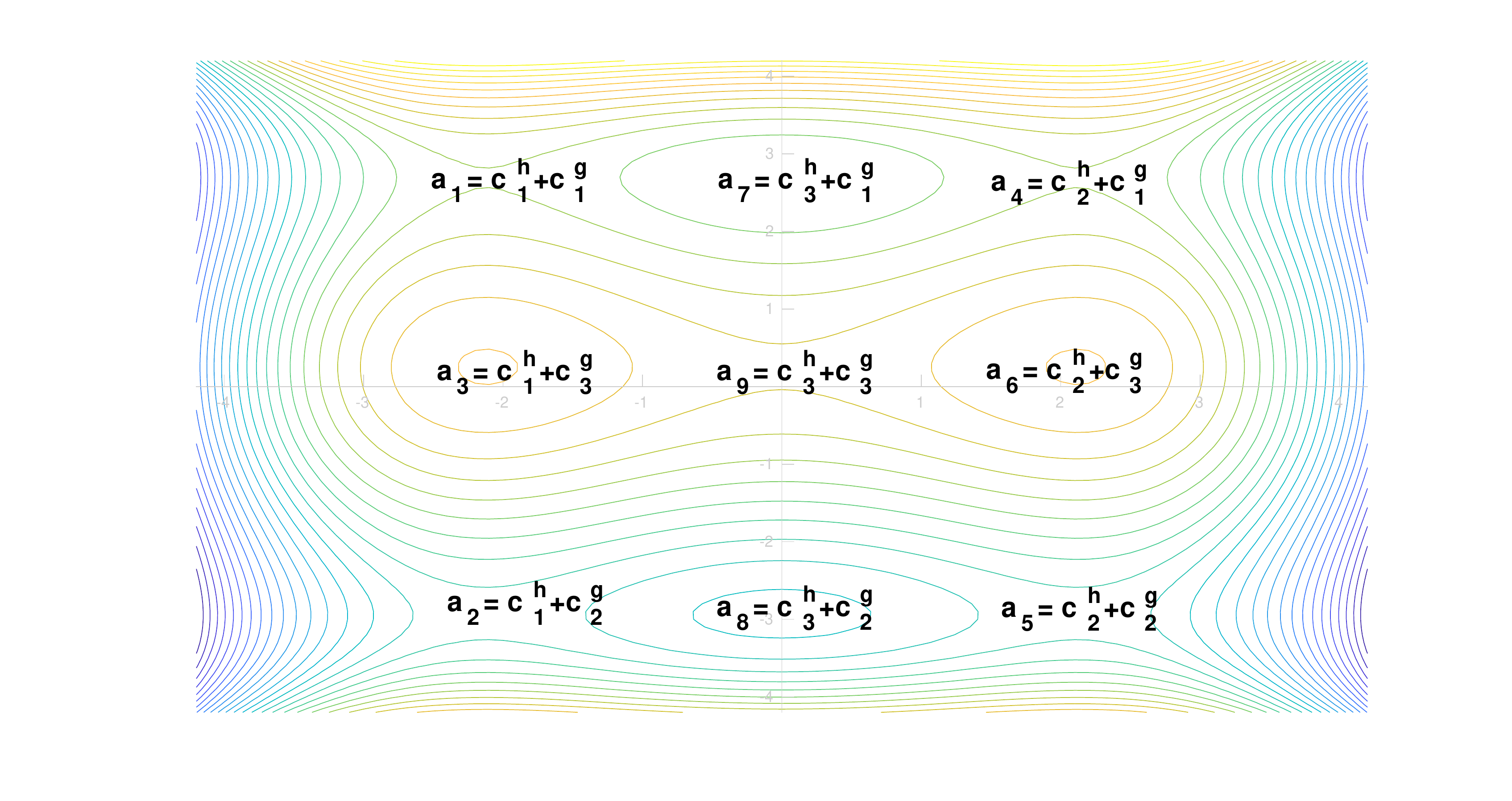}
	\caption{\footnotesize Contour lines of the real part of  $f_1(x,y)=-y^4+9y^2+x^4-16x^2-8x=t$.}
	\label{h-g_2d}
\end{figure}

Let $\A_i$  be the cycle which vanishes in the critical point corresponding to the critical value $a_i$ for $i=1,...,9$.  For simplicity in the notation we call the possible critical values as $a, b, c, d,e,f ,g ,h, i$ if all are different and we are removing from right to left as soon as the critical values are repeated. Moreover, we indicate with "*" on the right, the vanishing cycles which is not contained in the real plane (or its associated critical value). For instance,  the polynomial $f_1(x,y)$ of the  Example \ref{exampleh+g} satisfies that $a_1=a_4, a_2=a_5, a_3=a_6$ and the other critical values are different, therefore its  next Dynkin diagram is
\begin{equation*}
\begin{tikzpicture}           
\matrix (m) [matrix of math nodes, row sep=0.9em,
column sep=0.9em]{
	b&e^*&b\\
	a^*&d&a^*\\
	c&f^*&c,\\
};			
\path[-stealth]
(m-1-2) edge (m-1-1)
(m-1-2) edge(m-1-3)		
(m-1-1) edge (m-2-1)
(m-2-2)	edge (m-2-1)
(m-3-1) edge (m-2-1)
(m-2-1)	edge (m-1-2)
(m-2-1) edge (m-3-2)
(m-1-2) edge (m-2-2)
(m-3-2) edge (m-2-2)
(m-1-3) edge (m-2-3)
(m-2-2) edge (m-2-3)
(m-3-3) edge (m-2-3)
(m-2-3) edge (m-1-2)
(m-2-3) edge (m-3-2)
(m-3-2) edge (m-3-1)		
(m-3-2) edge (m-3-3);
\end{tikzpicture}
\end{equation*}
Similarly, the polynomial $f_2(x,y)$ of the Example \ref{exampleh-g}, has Dynkin diagram 
\begin{equation*}
\begin{tikzpicture}           
\matrix (m) [matrix of math nodes, row sep=0.9em,
column sep=0.9em]{
	a^*&d&a^*\\
	c&e^*&c\\
	b^*&f&b^*.\\
};			
\path[-stealth]
(m-2-1) edge (m-1-1)
(m-1-2) edge (m-1-1)
(m-1-1) edge (m-2-2)
(m-2-2) edge (m-1-2)
(m-2-3) edge (m-1-3)		
(m-1-2) edge (m-1-3)		
(m-1-3) edge (m-2-2)
(m-2-1) edge (m-3-1)
(m-3-2) edge (m-3-1)
(m-3-1) edge (m-2-2)
(m-2-2) edge (m-3-2)
(m-2-3) edge (m-3-3)		
(m-3-2) edge (m-3-3)		
(m-3-3) edge (m-2-2)
(m-2-2) edge (m-2-1)
(m-2-2) edge (m-2-3)
;
\end{tikzpicture}
\end{equation*}
The subspace of the homology $H_1(f^{-1}(b))$  generated by the monodromy action on a vanishing cycle $\A_i$ is denoted  $\text{Mon}(\A_i)$ $i=1,\ldots, 9$.  We will compute the monodromy for any $\A_i$ depending on the number of different critical values.  

For the Dynkin diagram
\begin{equation}
\label{Dynkincase1}
\begin{tikzpicture}           
\matrix (m) [matrix of math nodes, row sep=1em,
column sep=1em]{
	a_2&a_8^*&a_5\\
	a_1^*&a_7&a_4^*\\
	a_3&a_9^*&a_6,\\
};			
\path[-stealth]
(m-1-2) edge (m-1-1)
(m-1-2) edge (m-1-3)		
(m-1-1) edge (m-2-1)
(m-2-2)	edge (m-2-1)
(m-3-1) edge (m-2-1)
(m-2-1)	edge (m-1-2)
(m-2-1) edge (m-3-2)
(m-1-2) edge (m-2-2)
(m-3-2) edge (m-2-2)
(m-1-3) edge (m-2-3)
(m-2-2) edge (m-2-3)
(m-3-3) edge (m-2-3)
(m-2-3) edge (m-1-2)
(m-2-3) edge (m-3-2)
(m-3-2) edge (m-3-1)		
(m-3-2) edge (m-3-3);
\end{tikzpicture}
\end{equation}
when there is  one critical value the ranks of the subspaces are: $\rank(\text{Mon}(\A_i))=5$ for $i\neq 7$ and $\rank(\text{Mon}(\A_7))=3$. For more than one critical  value, in Table \ref{case2b}  we present the cases where the vanishing cycles are not simple cycles.

For the Dynkin diagram
\begin{equation}
\label{Dynkincase2}
\begin{tikzpicture}           
\matrix (m) [matrix of math nodes, row sep=1em,
column sep=1em]{
	a_1^*&a_7&a_4^*\\
	a_3&a_9^*&a_6\\
	a_2^*&a_8&a_5^*,\\
};			
\path[-stealth]
(m-2-1) edge (m-1-1)
(m-1-2) edge (m-1-1)
(m-1-1) edge (m-2-2)
(m-2-2) edge (m-1-2)
(m-2-3) edge (m-1-3)		
(m-1-2) edge (m-1-3)		
(m-1-3) edge (m-2-2)
(m-2-1) edge (m-3-1)
(m-3-2) edge (m-3-1)
(m-3-1) edge (m-2-2)
(m-2-2) edge (m-3-2)
(m-2-3) edge (m-3-3)		
(m-3-2) edge (m-3-3)		
(m-3-3) edge (m-2-2)
(m-2-2) edge (m-2-1)
(m-2-2) edge (m-2-3)
;
\end{tikzpicture} 
\end{equation}
when there is  one critical value the ranks of the subspaces are: $\rank(\text{Mon}(\A_i))=5$ for $i\neq 9$ and $\rank(\text{Mon}(\A_9))=3$. For more than one critical  value, in Table \ref{case4b}  we present the cases where the vanishing cycles are not simple cycles.

In Tables \ref{case2b} and \ref{case4b}, the first column is the number of different critical values. In the second column  are written the vanishing cycles which are not simple cycles. Right in front of any  non simple vanishing cycle $\alpha_i$, in the the third column,  it is a basis for the subspace $\text{Mon}(\alpha_i)$. Note that there are vanishing cycles which generate the same subspace, then they are on the same line in the second column. In fourth column  are the corresponding Dynkin diagram. Finally, in the last column, we add  information about an equivalence class, which is explained below. This last column together with the Theorem \ref{orbitspacehg} give us examples of polynomials that satisfy these diagrams. 
\begin{table}[htbp]
	\tiny
	\centering
	\begin{tabular}{|l|l|l|l|l|}
		\hline
		$\#$ critical&$\A_i$&$\text{Mon}(\A_i)$& Dynkin diagram of&$[f]$   \\values&&&$f(x,y)=h(x)+g(y)$&\\
		\hline \hline 
		2
		&
		\begin{minipage}{1cm}
			\begin{align*}
			\A_1^*, \A_4^*\\
			\A_8^*, \A_9^*\\
			\A_7
			\end{align*}
		\end{minipage}
		&
		\begin{minipage}{4cm}
			\begin{align*}
			\langle \A_1^*, \A_4^*, \A_7, \A_2+\A_3, \A_5+\A_6, \A_8^*+\A_9^*\rangle\\
			\langle \A_7, \A_8^*, \A_9^*, \A_1^*+\A_4^*, \A_2+\A_5, \A_3+\A_6\rangle\\
			\langle \A_7, \A_1^*+\A_4^*, \A_8^*+\A_9^*, \A_2+\A_3+\A_5+\A_6\rangle
			\end{align*}
		\end{minipage}
		&
		\begin{minipage}{5cm}\begin{tikzpicture}           
			\matrix (m) [matrix of math nodes, row sep=0.5em,
			column sep=0.5em]{
				a&b^*&a\\
				a^*&b&a^*\\
				a&b^*&a\\
			};			
			\path[-stealth]
			(m-1-2) edge (m-1-1)
			(m-1-2) edge (m-1-3)		
			(m-1-1) edge (m-2-1)
			(m-2-2)	edge (m-2-1)
			(m-3-1) edge (m-2-1)
			(m-2-1)	edge (m-1-2)
			(m-2-1) edge (m-3-2)
			(m-1-2) edge (m-2-2)
			(m-3-2) edge (m-2-2)
			(m-1-3) edge (m-2-3)
			(m-2-2) edge (m-2-3)
			(m-3-3) edge (m-2-3)
			(m-2-3) edge (m-1-2)
			(m-2-3) edge (m-3-2)
			(m-3-2) edge (m-3-1)		
			(m-3-2) edge (m-3-3);
			\end{tikzpicture}
			\begin{tikzpicture}           
			\matrix (m) [matrix of math nodes, row sep=0.5em,
			column sep=0.5em]{
				a&a^*&a\\
				b^*&b&b^*\\
				a&a^*&a\\
			};			
			\path[-stealth]
			(m-1-2) edge (m-1-1)
			(m-1-2) edge (m-1-3)		
			(m-1-1) edge (m-2-1)
			(m-2-2)	edge (m-2-1)
			(m-3-1) edge (m-2-1)
			(m-2-1)	edge (m-1-2)
			(m-2-1) edge (m-3-2)
			(m-1-2) edge (m-2-2)
			(m-3-2) edge (m-2-2)
			(m-1-3) edge (m-2-3)
			(m-2-2) edge (m-2-3)
			(m-3-3) edge (m-2-3)
			(m-2-3) edge (m-1-2)
			(m-2-3) edge (m-3-2)
			(m-3-2) edge (m-3-1)		
			(m-3-2) edge (m-3-3);
			\end{tikzpicture}
		\end{minipage}&$\Or_3$
		\\ \hline
		2&
		$\A_7, \A_8^*, \A_9^*$
		&$\langle \A_7,\A_8^*,\A_9^*, \A_1^*+\A_4^*,\A_2+\A_5, \A_3+\A_6 \rangle
		$
		&  
		\begin{minipage}{4cm}
			\begin{tikzpicture}           
			\matrix (m) [matrix of math nodes, row sep=0.5em,
			column sep=0.5em]{
				a&a^*&a\\
				a^*&a&a^*\\
				b&b^*&b\\
			};			
			\path[-stealth]
			(m-1-2) edge (m-1-1)
			(m-1-2) edge(m-1-3)		
			(m-1-1) edge (m-2-1)
			(m-2-2)	edge (m-2-1)
			(m-3-1) edge (m-2-1)
			(m-2-1)	edge (m-1-2)
			(m-2-1) edge (m-3-2)
			(m-1-2) edge (m-2-2)
			(m-3-2) edge (m-2-2)
			(m-1-3) edge (m-2-3)
			(m-2-2) edge (m-2-3)
			(m-3-3) edge (m-2-3)
			(m-2-3) edge (m-1-2)
			(m-2-3) edge (m-3-2)
			(m-3-2) edge (m-3-1)		
			(m-3-2) edge (m-3-3);
			\end{tikzpicture}
		\end{minipage}&$\Or_2$\\
		\hline
		2&$\A_1^*, \A_4^*, \A_7$&$\langle \A_1^*,\A_4^*,\A_7, \A_2+\A_3,\A_5+\A_6, \A_8^*+\A_9^* \rangle$&  
		\begin{minipage}{4cm}
			\begin{tikzpicture}           
			\matrix (m) [matrix of math nodes, row sep=0.5em,
			column sep=0.5em]{
				b&a^*&a\\
				b^*&a&a^*\\
				b&a^*&a\\
			};			
			\path[-stealth]
			(m-1-2) edge (m-1-1)
			(m-1-2) edge(m-1-3)		
			(m-1-1) edge (m-2-1)
			(m-2-2)	edge (m-2-1)
			(m-3-1) edge (m-2-1)
			(m-2-1)	edge (m-1-2)
			(m-2-1) edge (m-3-2)
			(m-1-2) edge (m-2-2)
			(m-3-2) edge (m-2-2)
			(m-1-3) edge (m-2-3)
			(m-2-2) edge (m-2-3)
			(m-3-3) edge (m-2-3)
			(m-2-3) edge (m-1-2)
			(m-2-3) edge (m-3-2)
			(m-3-2) edge (m-3-1)		
			(m-3-2) edge (m-3-3);
			\end{tikzpicture}
		\end{minipage}&$\Or_2$\\
		\hline
		3&
		\begin{minipage}{1cm}
			\begin{align*}
			\A_1^*, \A_4^*\\
			\A_2, \A_6\\
			\A_3, \A_5\\
			\A_8^*, \A_9^*\\
			\A_7
			\end{align*}
		\end{minipage}& 
		\begin{minipage}{4cm}
			\begin{align*}
			\langle \A_1^*, \A_4^*, \A_7, \A_2+\A_3, \A_5+\A_6, \A_8^*+\A_9^*\rangle\\
			\langle \A_2, \A_6,\A_7, \A_1^*-\A_8^*,\A_4^*-\A_9^*,\A_3+\A_5,\A_1^*+\A_4^*+\A_8^*+\A_9^*\rangle\\
			\langle \A_3, \A_5,\A_7, \A_1^*-\A_9^*,\A_4^*-\A_8^*,\A_2+\A_6,\A_1^*+\A_4^*+\A_8^*+\A_9^*\rangle\\
			\langle \A_7, \A_8^*, \A_9^*, \A_1^*+\A_4^*, \A_2+\A_5, \A_3+\A_6\rangle\\
			\langle \A_7, \A_1^*+\A_4^*, \A_8^*+\A_9^*, \A_2+\A_3+\A_5+\A_6\rangle
			\end{align*}
		\end{minipage}
		&
		\begin{minipage}{4cm}
			\begin{tikzpicture}           
			\matrix (m) [matrix of math nodes, row sep=0.5em,
			column sep=0.5em]{
				b&a^*&b\\
				a^*&c&a^*\\
				b&a^*&b\\
			};			
			\path[-stealth] 
			(m-1-2) edge (m-1-1)
			(m-1-2) edge(m-1-3)		
			(m-1-1) edge (m-2-1)
			(m-2-2)	edge (m-2-1)
			(m-3-1) edge (m-2-1)
			(m-2-1)	edge (m-1-2)
			(m-2-1) edge (m-3-2)
			(m-1-2) edge (m-2-2)
			(m-3-2) edge (m-2-2)
			(m-1-3) edge (m-2-3)
			(m-2-2) edge (m-2-3)
			(m-3-3) edge (m-2-3)
			(m-2-3) edge (m-1-2)
			(m-2-3) edge (m-3-2)
			(m-3-2) edge (m-3-1)		
			(m-3-2) edge (m-3-3);
			\end{tikzpicture}
		\end{minipage}&$\Or_4$\\
		\hline

		3&$\A_7, \A_8^*, \A_9^*$&$\langle \A_7,\A_8^*,\A_9^*, \A_1^*+\A_4^*,\A_2+\A_5, \A_3+\A_6 \rangle$&  
		\begin{minipage}{4cm}
			\begin{tikzpicture}           
			\matrix (m) [matrix of math nodes, row sep=0.5em,
			column sep=0.5em]{
				b&b^*&b\\
				a^*&a&a^*\\
				c&c^*&c\\
			};			
			\path[-stealth]
			(m-1-2) edge (m-1-1)
			(m-1-2) edge(m-1-3)		
			(m-1-1) edge (m-2-1)
			(m-2-2)	edge (m-2-1)
			(m-3-1) edge (m-2-1)
			(m-2-1)	edge (m-1-2)
			(m-2-1) edge (m-3-2)
			(m-1-2) edge (m-2-2)
			(m-3-2) edge (m-2-2)
			(m-1-3) edge (m-2-3)
			(m-2-2) edge (m-2-3)
			(m-3-3) edge (m-2-3)
			(m-2-3) edge (m-1-2)
			(m-2-3) edge (m-3-2)
			(m-3-2) edge (m-3-1)		
			(m-3-2) edge (m-3-3);
			\end{tikzpicture}\begin{tikzpicture}           
			\matrix (m) [matrix of math nodes, row sep=0.5em,
			column sep=0.5em]{
				a&b^*&a\\
				a^*&b&a^*\\
				c&a^*&c\\
			};			
			\path[-stealth]
			(m-1-2) edge (m-1-1)
			(m-1-2) edge(m-1-3)		
			(m-1-1) edge (m-2-1)
			(m-2-2)	edge (m-2-1)
			(m-3-1) edge (m-2-1)
			(m-2-1)	edge (m-1-2)
			(m-2-1) edge (m-3-2)
			(m-1-2) edge (m-2-2)
			(m-3-2) edge (m-2-2)
			(m-1-3) edge (m-2-3)
			(m-2-2) edge (m-2-3)
			(m-3-3) edge (m-2-3)
			(m-2-3) edge (m-1-2)
			(m-2-3) edge (m-3-2)
			(m-3-2) edge (m-3-1)		
			(m-3-2) edge (m-3-3);
			\end{tikzpicture}
		\end{minipage}&$\Or_2$\\
		\hline
		3&$\A_1^*, \A_4^*, \A_7$&$\langle \A_1^*,\A_4^*,\A_7, \A_2+\A_3,\A_5+\A_6, \A_8^*+\A_9^* \rangle$&  
		\begin{minipage}{4cm}
			\begin{tikzpicture}           
			\matrix (m) [matrix of math nodes, row sep=0.5em,
			column sep=0.5em]{
				a&c^*&b\\
				a^*&c&b^*\\
				a&c^*&b\\
			};			
			\path[-stealth]
			(m-1-2) edge (m-1-1)
			(m-1-2) edge(m-1-3)		
			(m-1-1) edge (m-2-1)
			(m-2-2)	edge (m-2-1)
			(m-3-1) edge (m-2-1)
			(m-2-1)	edge (m-1-2)
			(m-2-1) edge (m-3-2)
			(m-1-2) edge (m-2-2)
			(m-3-2) edge (m-2-2)
			(m-1-3) edge (m-2-3)
			(m-2-2) edge (m-2-3)
			(m-3-3) edge (m-2-3)
			(m-2-3) edge (m-1-2)
			(m-2-3) edge (m-3-2)
			(m-3-2) edge (m-3-1)		
			(m-3-2) edge (m-3-3);
			\end{tikzpicture}\begin{tikzpicture}           
			\matrix (m) [matrix of math nodes, row sep=0.5em,
			column sep=0.5em]{
				b&a^*&a\\
				a^*&c&c^*\\
				b&a^*&a\\
			};			
			\path[-stealth]
			(m-1-2) edge (m-1-1)
			(m-1-2) edge(m-1-3)		
			(m-1-1) edge (m-2-1)
			(m-2-2)	edge (m-2-1)
			(m-3-1) edge (m-2-1)
			(m-2-1)	edge (m-1-2)
			(m-2-1) edge (m-3-2)
			(m-1-2) edge (m-2-2)
			(m-3-2) edge (m-2-2)
			(m-1-3) edge (m-2-3)
			(m-2-2) edge (m-2-3)
			(m-3-3) edge (m-2-3)
			(m-2-3) edge (m-1-2)
			(m-2-3) edge (m-3-2)
			(m-3-2) edge (m-3-1)		
			(m-3-2) edge (m-3-3);
			\end{tikzpicture}
		\end{minipage}&$\Or_2$\\
		\hline

		4&
		\begin{minipage}{1cm}
			\begin{align*}
			\A_1^*, \A_4^*\\
			\A_8^*, \A_9^*\\
			\A_7
			\end{align*}	
		\end{minipage}& 
		\begin{minipage}{4cm}
			\begin{align*}
			\langle \A_1^*, \A_4^*, \A_7, \A_2+\A_3, \A_5+\A_6, \A_8^*+\A_9^*\rangle\\
			\langle \A_7, \A_8^*, \A_9^*, \A_1^*+\A_4^*, \A_2+\A_5, \A_3+\A_6\rangle\\
			\langle \A_7, \A_1^*+\A_4^*, \A_8^*+\A_9^*, \A_2+\A_3+\A_5+\A_6\rangle
			\end{align*}
		\end{minipage}&
		\begin{minipage}{4cm}
			\begin{tikzpicture}           
			\matrix (m) [matrix of math nodes, row sep=0.5em,
			column sep=0.5em]{
				a&b^*&a\\
				c^*&d&c^*\\
				a&b^*&a\\
			};			
			\path[-stealth]
			(m-1-2) edge (m-1-1)
			(m-1-2) edge(m-1-3)		
			(m-1-1) edge (m-2-1)
			(m-2-2)	edge (m-2-1)
			(m-3-1) edge (m-2-1)
			(m-2-1)	edge (m-1-2)
			(m-2-1) edge (m-3-2)
			(m-1-2) edge (m-2-2)
			(m-3-2) edge (m-2-2)
			(m-1-3) edge (m-2-3)
			(m-2-2) edge (m-2-3)
			(m-3-3) edge (m-2-3)
			(m-2-3) edge (m-1-2)
			(m-2-3) edge (m-3-2)
			(m-3-2) edge (m-3-1)		
			(m-3-2) edge (m-3-3);
			\end{tikzpicture}
		\end{minipage}&$\Or_3$\\
		\hline
		4&$\A_7, \A_8^*, \A_9^*$&$\langle \A_7,\A_8^*,\A_9^*, \A_1^*+\A_4^*,\A_2+\A_5, \A_3+\A_6 \rangle$&  
		\begin{minipage}{4cm}
			\begin{tikzpicture}           
			\matrix (m) [matrix of math nodes, row sep=0.5em,
			column sep=0.5em]{
				a&b^*&a\\
				a^*&b&a^*\\
				c&d^*&c\\
			};			
			\path[-stealth]
			(m-1-2) edge (m-1-1)
			(m-1-2) edge(m-1-3)		
			(m-1-1) edge (m-2-1)
			(m-2-2)	edge (m-2-1)
			(m-3-1) edge (m-2-1)
			(m-2-1)	edge (m-1-2)
			(m-2-1) edge (m-3-2)
			(m-1-2) edge (m-2-2)
			(m-3-2) edge (m-2-2)
			(m-1-3) edge (m-2-3)
			(m-2-2) edge (m-2-3)
			(m-3-3) edge (m-2-3)
			(m-2-3) edge (m-1-2)
			(m-2-3) edge (m-3-2)
			(m-3-2) edge (m-3-1)		
			(m-3-2) edge (m-3-3);
			\end{tikzpicture}
			\begin{tikzpicture}           
			\matrix (m) [matrix of math nodes, row sep=0.5em,
			column sep=0.5em]{
				b&a^*&b\\
				a^*&d&a^*\\
				c&b^*&c\\
			};			
			\path[-stealth]
			(m-1-2) edge (m-1-1)
			(m-1-2) edge(m-1-3)		
			(m-1-1) edge (m-2-1)
			(m-2-2)	edge (m-2-1)
			(m-3-1) edge (m-2-1)
			(m-2-1)	edge (m-1-2)
			(m-2-1) edge (m-3-2)
			(m-1-2) edge (m-2-2)
			(m-3-2) edge (m-2-2)
			(m-1-3) edge (m-2-3)
			(m-2-2) edge (m-2-3)
			(m-3-3) edge (m-2-3)
			(m-2-3) edge (m-1-2)
			(m-2-3) edge (m-3-2)
			(m-3-2) edge (m-3-1)		
			(m-3-2) edge (m-3-3);
			\end{tikzpicture}
		\end{minipage}&$\Or_2$\\
		\hline
		4&$\A_1^*, \A_4^*, \A_7$&$\langle \A_1^*,\A_4^*,\A_7, \A_2+\A_3,\A_5+\A_6, \A_8^*+\A_9^* \rangle$&  
		\begin{minipage}{4cm}
			\begin{tikzpicture}           
			\matrix (m) [matrix of math nodes, row sep=0.5em,
			column sep=0.5em]{
				b&a^*&a\\
				d^*&c&c^*\\
				b&a^*&a\\
			};			
			\path[-stealth]
			(m-1-2) edge (m-1-1)
			(m-1-2) edge(m-1-3)		
			(m-1-1) edge (m-2-1)
			(m-2-2)	edge (m-2-1)
			(m-3-1) edge (m-2-1)
			(m-2-1)	edge (m-1-2)
			(m-2-1) edge (m-3-2)
			(m-1-2) edge (m-2-2)
			(m-3-2) edge (m-2-2)
			(m-1-3) edge (m-2-3)
			(m-2-2) edge (m-2-3)
			(m-3-3) edge (m-2-3)
			(m-2-3) edge (m-1-2)
			(m-2-3) edge (m-3-2)
			(m-3-2) edge (m-3-1)		
			(m-3-2) edge (m-3-3);
			\end{tikzpicture}\begin{tikzpicture}           
			\matrix (m) [matrix of math nodes, row sep=0.5em,
			column sep=0.5em]{
				c&a^*&b\\
				b^*&d&a^*\\
				c&a^*&b\\
			};			
			\path[-stealth]
			(m-1-2) edge (m-1-1)
			(m-1-2) edge(m-1-3)		
			(m-1-1) edge (m-2-1)
			(m-2-2)	edge (m-2-1)
			(m-3-1) edge (m-2-1)
			(m-2-1)	edge (m-1-2)
			(m-2-1) edge (m-3-2)
			(m-1-2) edge (m-2-2)
			(m-3-2) edge (m-2-2)
			(m-1-3) edge (m-2-3)
			(m-2-2) edge (m-2-3)
			(m-3-3) edge (m-2-3)
			(m-2-3) edge (m-1-2)
			(m-2-3) edge (m-3-2)
			(m-3-2) edge (m-3-1)		
			(m-3-2) edge (m-3-3);
			\end{tikzpicture}
		\end{minipage}&$\Or_2$\\
		\hline
		
		5&$\A_7, \A_8^*, \A_9^*$&$\langle \A_7,\A_8,\A_9, \A_1^*+\A_4^*,\A_2+\A_5, \A_3+\A_6 \rangle$&  
		\begin{minipage}{5cm}
			\begin{tikzpicture}           
			\matrix (m) [matrix of math nodes, row sep=0.5em,
			column sep=0.5em]{
				b&a^*&b\\
				a^*&d&a^*\\
				c&e^*&c\\
			};			
			\path[-stealth]
			(m-1-2) edge (m-1-1)
			(m-1-2) edge(m-1-3)		
			(m-1-1) edge (m-2-1)
			(m-2-2)	edge (m-2-1)
			(m-3-1) edge (m-2-1)
			(m-2-1)	edge (m-1-2)
			(m-2-1) edge (m-3-2)
			(m-1-2) edge (m-2-2)
			(m-3-2) edge (m-2-2)
			(m-1-3) edge (m-2-3)
			(m-2-2) edge (m-2-3)
			(m-3-3) edge (m-2-3)
			(m-2-3) edge (m-1-2)
			(m-2-3) edge (m-3-2)
			(m-3-2) edge (m-3-1)		
			(m-3-2) edge (m-3-3);
			\end{tikzpicture}\begin{tikzpicture}           
			\matrix (m) [matrix of math nodes, row sep=0.5em,
			column sep=0.5em]{
				b&e^*&b\\
				a^*&d&a^*\\
				c&a^*&c\\
			};			
			\path[-stealth]
			(m-1-2) edge (m-1-1)
			(m-1-2) edge(m-1-3)		
			(m-1-1) edge (m-2-1)
			(m-2-2)	edge (m-2-1)
			(m-3-1) edge (m-2-1)
			(m-2-1)	edge (m-1-2)
			(m-2-1) edge (m-3-2)
			(m-1-2) edge (m-2-2)
			(m-3-2) edge (m-2-2)
			(m-1-3) edge (m-2-3)
			(m-2-2) edge (m-2-3)
			(m-3-3) edge (m-2-3)
			(m-2-3) edge (m-1-2)
			(m-2-3) edge (m-3-2)
			(m-3-2) edge (m-3-1)		
			(m-3-2) edge (m-3-3);
			\end{tikzpicture}\begin{tikzpicture}           
			\matrix (m) [matrix of math nodes, row sep=0.5em,
			column sep=0.5em]{
				a&e^*&a\\
				b^*&d&b^*\\
				c&a^*&c\\
			};			
			\path[-stealth]
			(m-1-2) edge (m-1-1)
			(m-1-2) edge(m-1-3)		
			(m-1-1) edge (m-2-1)
			(m-2-2)	edge (m-2-1)
			(m-3-1) edge (m-2-1)
			(m-2-1)	edge (m-1-2)
			(m-2-1) edge (m-3-2)
			(m-1-2) edge (m-2-2)
			(m-3-2) edge (m-2-2)
			(m-1-3) edge (m-2-3)
			(m-2-2) edge (m-2-3)
			(m-3-3) edge (m-2-3)
			(m-2-3) edge (m-1-2)
			(m-2-3) edge (m-3-2)
			(m-3-2) edge (m-3-1)		
			(m-3-2) edge (m-3-3);
			\end{tikzpicture}
		\end{minipage}&$\Or_2$\\
		\hline
		5&$\A_1^*, \A_4^*, \A_7$&$\langle \A_1^*,\A_4^*,\A_7, \A_2+\A_3,\A_5+\A_6, \A_8^*+\A_9^* \rangle$&  
		\begin{minipage}{5cm}
			\begin{tikzpicture}           
			\matrix (m) [matrix of math nodes, row sep=0.5em,
			column sep=0.5em]{
				c&a^*&b\\
				e^*&d&a^*\\
				c&a^*&b\\
			};			
			\path[-stealth]
			(m-1-2) edge (m-1-1)
			(m-1-2) edge(m-1-3)		
			(m-1-1) edge (m-2-1)
			(m-2-2)	edge (m-2-1)
			(m-3-1) edge (m-2-1)
			(m-2-1)	edge (m-1-2)
			(m-2-1) edge (m-3-2)
			(m-1-2) edge (m-2-2)
			(m-3-2) edge (m-2-2)
			(m-1-3) edge (m-2-3)
			(m-2-2) edge (m-2-3)
			(m-3-3) edge (m-2-3)
			(m-2-3) edge (m-1-2)
			(m-2-3) edge (m-3-2)
			(m-3-2) edge (m-3-1)		
			(m-3-2) edge (m-3-3);
			\end{tikzpicture}\begin{tikzpicture}           
			\matrix (m) [matrix of math nodes, row sep=0.5em,
			column sep=0.5em]{
				c&a^*&b\\
				a^*&d&e^*\\
				c&a^*&b\\
			};			
			\path[-stealth]
			(m-1-2) edge (m-1-1)
			(m-1-2) edge(m-1-3)		
			(m-1-1) edge (m-2-1)
			(m-2-2)	edge (m-2-1)
			(m-3-1) edge (m-2-1)
			(m-2-1)	edge (m-1-2)
			(m-2-1) edge (m-3-2)
			(m-1-2) edge (m-2-2)
			(m-3-2) edge (m-2-2)
			(m-1-3) edge (m-2-3)
			(m-2-2) edge (m-2-3)
			(m-3-3) edge (m-2-3)
			(m-2-3) edge (m-1-2)
			(m-2-3) edge (m-3-2)
			(m-3-2) edge (m-3-1)		
			(m-3-2) edge (m-3-3);
			\end{tikzpicture}\begin{tikzpicture}           
			\matrix (m) [matrix of math nodes, row sep=0.5em,
			column sep=0.5em]{
				c&b^*&a\\
				a^*&d&e^*\\
				c&b^*&a\\
			};			
			\path[-stealth]
			(m-1-2) edge (m-1-1)
			(m-1-2) edge(m-1-3)		
			(m-1-1) edge (m-2-1)
			(m-2-2)	edge (m-2-1)
			(m-3-1) edge (m-2-1)
			(m-2-1)	edge (m-1-2)
			(m-2-1) edge (m-3-2)
			(m-1-2) edge (m-2-2)
			(m-3-2) edge (m-2-2)
			(m-1-3) edge (m-2-3)
			(m-2-2) edge (m-2-3)
			(m-3-3) edge (m-2-3)
			(m-2-3) edge (m-1-2)
			(m-2-3) edge (m-3-2)
			(m-3-2) edge (m-3-1)		
			(m-3-2) edge (m-3-3);
			\end{tikzpicture}
		\end{minipage}&$\Or_2$\\
		\hline
		6&$\A_7, \A_8^*, \A_9^*$&$\langle \A_7,\A_8^*,\A_9^*, \A_1^*+\A_4^*,\A_2+\A_5, \A_3+\A_6 \rangle$&  
		\begin{minipage}{4cm}
			\begin{tikzpicture}           
			\matrix (m) [matrix of math nodes, row sep=0.5em,
			column sep=0.5em]{
				b&e^*&b\\
				a^*&d&a^*\\
				c&f^*&c\\
			};			
			\path[-stealth]
			(m-1-2) edge (m-1-1)
			(m-1-2) edge(m-1-3)		
			(m-1-1) edge (m-2-1)
			(m-2-2)	edge (m-2-1)
			(m-3-1) edge (m-2-1)
			(m-2-1)	edge (m-1-2)
			(m-2-1) edge (m-3-2)
			(m-1-2) edge (m-2-2)
			(m-3-2) edge (m-2-2)
			(m-1-3) edge (m-2-3)
			(m-2-2) edge (m-2-3)
			(m-3-3) edge (m-2-3)
			(m-2-3) edge (m-1-2)
			(m-2-3) edge (m-3-2)
			(m-3-2) edge (m-3-1)		
			(m-3-2) edge (m-3-3);
			\end{tikzpicture}
		\end{minipage}&$\Or_2$\\
		\hline
		
		6&$\A_1^*, \A_4^*, \A_7$&$\langle \A_1^*,\A_4^*,\A_7, \A_2+\A_3,\A_5+\A_6, \A_8^*+\A_9^* \rangle$&  
		\begin{minipage}{4cm}
			\begin{tikzpicture}           
			\matrix (m) [matrix of math nodes, row sep=0.5em,
			column sep=0.5em]{
				a&c^*&b\\
				d^*&f&e^*\\
				a&c^*&b\\
			};			
			\path[-stealth]
			(m-1-2) edge (m-1-1)
			(m-1-2) edge(m-1-3)		
			(m-1-1) edge (m-2-1)
			(m-2-2)	edge (m-2-1)
			(m-3-1) edge (m-2-1)
			(m-2-1)	edge (m-1-2)
			(m-2-1) edge (m-3-2)
			(m-1-2) edge (m-2-2)
			(m-3-2) edge (m-2-2)
			(m-1-3) edge (m-2-3)
			(m-2-2) edge (m-2-3)
			(m-3-3) edge (m-2-3)
			(m-2-3) edge (m-1-2)
			(m-2-3) edge (m-3-2)
			(m-3-2) edge (m-3-1)		
			(m-3-2) edge (m-3-3);
			\end{tikzpicture}
		\end{minipage}&$\Or_2$\\
		\hline
	%	6&$\A_7$&$\langle \A_3, \A_5, \A_7, \A_1^*-\A_9^*,\A_4^*-\A_8^*,\A_6+\A_2\rangle$&
	%	\begin{minipage}{4cm}
%			\begin{tikzpicture}           
%			\matrix (m) [matrix of math nodes, row sep=0.5em,
%			column sep=0.5em]{
%				c&b^*&f\\
%				a^*&d&b^*\\
%				e&a^*&c\\
%			};			
%			\path[-stealth]
%			(m-1-2) edge (m-1-1)
%			(m-1-2) edge(m-1-3)		
%			(m-1-1) edge (m-2-1)
%			(m-2-2)	edge (m-2-1)
%			(m-3-1) edge (m-2-1)
%			(m-2-1)	edge (m-1-2)
%			(m-2-1) edge (m-3-2)
%			(m-1-2) edge (m-2-2)
%			(m-3-2) edge (m-2-2)
%			(m-1-3) edge (m-2-3)
%			(m-2-2) edge (m-2-3)
%			(m-3-3) edge (m-2-3)
%			(m-2-3) edge (m-1-2)
%			(m-2-3) edge (m-3-2)
%			(m-3-2) edge (m-3-1)		
%			(m-3-2) edge (m-3-3);
%			\end{tikzpicture}
%		\end{minipage}&$\Or_4$\\
%		\hline
	\end{tabular}
	\caption{Monodromy for $h(x)+g(y)\in \R[x,y]_{d\leq 4}$ and Dynkin diagram (\ref{Dynkincase1})}
	\label{case2b}
\end{table}
\begin{table}[htbp]
	\tiny	
	\centering
	\begin{tabular}{|l|l|l|l|l|}
		\hline
		$\#$ critical&$\A_i$&$\text{Mon}(\A_i)$& Dynkin diagram of& $[f]$  \\values&&&$f(x,y=)h(x)+g(y)$&\\
		\hline \hline 
		2&
		\begin{minipage}{1cm}
			\begin{align*}
			\A_3, \A_6\\
			\A_7, \A_8\\
			\A_9^*
			\end{align*}
		\end{minipage}
		& 
		\begin{minipage}{4cm}
			\begin{align*}
			\langle \A_3, \A_6, \A_9^*, \A_1^*+\A_2^*, \A_7+\A_8, \A_4^*+\A_5^*\rangle\\
			\langle \A_7, \A_8, \A_9^*, \A_1^*+\A_4^*, \A_2^*+\A_5^*, \A_3+\A_6\rangle\\
			\langle \A_9^*, \A_3+\A_6, \A_7+\A_8, \A_1^*+\A_2^*+\A_4^*+\A_5^*\rangle
			\end{align*}
		\end{minipage}
		&
		\begin{minipage}{5cm}
			\begin{tikzpicture}           
			\matrix (m) [matrix of math nodes, row sep=0.5em,
			column sep=0.5em]{
				a^*&a&a^*\\
				b&b^*&b\\
				a^*&a&a^*\\
			};			
			\path[-stealth]
			(m-2-1) edge (m-1-1)
			(m-1-2) edge (m-1-1)
			(m-1-1) edge (m-2-2)
			(m-2-2) edge (m-1-2)
			(m-2-3) edge (m-1-3)		
			(m-1-2) edge (m-1-3)		
			(m-1-3) edge (m-2-2)
			(m-2-1) edge (m-3-1)
			(m-3-2) edge (m-3-1)
			(m-3-1) edge (m-2-2)
			(m-2-2) edge (m-3-2)
			(m-2-3) edge (m-3-3)		
			(m-3-2) edge (m-3-3)		
			(m-3-3) edge (m-2-2)
			(m-2-2) edge (m-2-1)
			(m-2-2) edge (m-2-3)
			;
			\end{tikzpicture}\begin{tikzpicture}           
			\matrix (m) [matrix of math nodes, row sep=0.5em,
			column sep=0.5em]{
				a^*&b&a^*\\
				a&b^*&a\\
				a^*&b&a^*\\
			};			
			\path[-stealth]
			(m-2-1) edge (m-1-1)
			(m-1-2) edge (m-1-1)
			(m-1-1) edge (m-2-2)
			(m-2-2) edge (m-1-2)
			(m-2-3) edge (m-1-3)		
			(m-1-2) edge (m-1-3)		
			(m-1-3) edge (m-2-2)
			(m-2-1) edge (m-3-1)
			(m-3-2) edge (m-3-1)
			(m-3-1) edge (m-2-2)
			(m-2-2) edge (m-3-2)
			(m-2-3) edge (m-3-3)		
			(m-3-2) edge (m-3-3)		
			(m-3-3) edge (m-2-2)
			(m-2-2) edge (m-2-1)
			(m-2-2) edge (m-2-3)
			;
			\end{tikzpicture}
		\end{minipage}&$\Or_3$
		\\ \hline
		2&$\A_7, \A_8, \A_9^*$&$\langle \A_7,\A_8,\A_9^*, \A_1^*+\A_4^*,\A_2^*+\A_5^*, \A_3+\A_6 \rangle$&  
		\begin{minipage}{5cm}
			\begin{tikzpicture}           
			\matrix (m) [matrix of math nodes, row sep=0.5em,
			column sep=0.5em]{
				b^*&b&b^*\\
				a&a^*&a\\
				a^*&a&a^*\\
			};			
			\path[-stealth]
			(m-2-1) edge (m-1-1)
			(m-1-2) edge (m-1-1)
			(m-1-1) edge (m-2-2)
			(m-2-2) edge (m-1-2)
			(m-2-3) edge (m-1-3)		
			(m-1-2) edge (m-1-3)		
			(m-1-3) edge (m-2-2)
			(m-2-1) edge (m-3-1)
			(m-3-2) edge (m-3-1)
			(m-3-1) edge (m-2-2)
			(m-2-2) edge (m-3-2)
			(m-2-3) edge (m-3-3)		
			(m-3-2) edge (m-3-3)		
			(m-3-3) edge (m-2-2)
			(m-2-2) edge (m-2-1)
			(m-2-2) edge (m-2-3)
			;
			\end{tikzpicture}
		\end{minipage}&$\Or_2$\\
		\hline
		2&$\A_3, \A_6, \A_9^*$&$\langle \A_3,\A_6,\A_9^*, \A_1^*+\A_2^*,\A_4^*+\A_5^*, \A_7+\A_8  \rangle$&  
		\begin{minipage}{5cm}
			\begin{tikzpicture}           
			\matrix (m) [matrix of math nodes, row sep=0.5em,
			column sep=0.5em]{
				b^*&a&a^*\\
				b&a^*&a\\
				b^*&a&a^*\\
			};			
			\path[-stealth]
			(m-2-1) edge (m-1-1)
			(m-1-2) edge (m-1-1)
			(m-1-1) edge (m-2-2)
			(m-2-2) edge (m-1-2)
			(m-2-3) edge (m-1-3)		
			(m-1-2) edge (m-1-3)		
			(m-1-3) edge (m-2-2)
			(m-2-1) edge (m-3-1)
			(m-3-2) edge (m-3-1)
			(m-3-1) edge (m-2-2)
			(m-2-2) edge (m-3-2)
			(m-2-3) edge (m-3-3)		
			(m-3-2) edge (m-3-3)		
			(m-3-3) edge (m-2-2)
			(m-2-2) edge (m-2-1)
			(m-2-2) edge (m-2-3)
			;
			\end{tikzpicture}
		\end{minipage}&$\Or_2$\\
		\hline
		
		3&
		\begin{minipage}{1cm}
			\begin{align*}
			\A_3, \A_6\\	
			\A_7, \A_8\\
			\A_9^*
			\end{align*}
		\end{minipage}
		& 
		\begin{minipage}{4cm}
			\begin{align*}
			\langle \A_3, \A_6, \A_9^*, \A_1^*+\A_2^*, \A_7+\A_8, \A_4^*+\A_5^*\rangle\\
			\langle \A_7, \A_8, \A_9^*, \A_1^*+\A_4^*, \A_2^*+\A_5^*, \A_3+\A_6\rangle\\
			\langle \A_9^*, \A_3+\A_6, \A_7+\A_8, \A_1^*+\A_2^*+\A_4^*+\A_5^*\rangle
			\end{align*}
		\end{minipage}
		&
		\begin{minipage}{5cm}
			\begin{tikzpicture}           
			\matrix (m) [matrix of math nodes, row sep=0.5em,
			column sep=0.5em]{
				a^*&b&a^*\\
				c&a^*&c\\
				a^*&b&a^*\\
			};			
			\path[-stealth]
			(m-2-1) edge (m-1-1)
			(m-1-2) edge (m-1-1)
			(m-1-1) edge (m-2-2)
			(m-2-2) edge (m-1-2)
			(m-2-3) edge (m-1-3)		
			(m-1-2) edge (m-1-3)		
			(m-1-3) edge (m-2-2)
			(m-2-1) edge (m-3-1)
			(m-3-2) edge (m-3-1)
			(m-3-1) edge (m-2-2)
			(m-2-2) edge (m-3-2)
			(m-2-3) edge (m-3-3)		
			(m-3-2) edge (m-3-3)		
			(m-3-3) edge (m-2-2)
			(m-2-2) edge (m-2-1)
			(m-2-2) edge (m-2-3)
			;
			\end{tikzpicture}
		\end{minipage}&$\Or_3$
		\\ \hline
		3&$\A_7, \A_8, \A_9^*$&$\langle \A_7,\A_8,\A_9^*, \A_1^*+\A_4^*,\A_2^*+\A_5^*, \A_3+\A_6 \rangle$&  
		\begin{minipage}{5cm}
			\begin{tikzpicture}           
			\matrix (m) [matrix of math nodes, row sep=0.5em,
			column sep=0.5em]{
				a^*&a&a^*\\
				c&c^*&c\\
				b^*&b&b^*\\
			};			
			\path[-stealth]
			(m-2-1) edge (m-1-1)
			(m-1-2) edge (m-1-1)
			(m-1-1) edge (m-2-2)
			(m-2-2) edge (m-1-2)
			(m-2-3) edge (m-1-3)		
			(m-1-2) edge (m-1-3)		
			(m-1-3) edge (m-2-2)
			(m-2-1) edge (m-3-1)
			(m-3-2) edge (m-3-1)
			(m-3-1) edge (m-2-2)
			(m-2-2) edge (m-3-2)
			(m-2-3) edge (m-3-3)		
			(m-3-2) edge (m-3-3)		
			(m-3-3) edge (m-2-2)
			(m-2-2) edge (m-2-1)
			(m-2-2) edge (m-2-3)
			;
			\end{tikzpicture}
			\begin{tikzpicture}           
			\matrix (m) [matrix of math nodes, row sep=0.5em,
			column sep=0.5em]{
				a^*&c&a^*\\
				b&a^*&b\\
				b^*&a&b^*\\
			};
			\path[-stealth]
			(m-2-1) edge (m-1-1)
			(m-1-2) edge (m-1-1)
			(m-1-1) edge (m-2-2)
			(m-2-2) edge (m-1-2)
			(m-2-3) edge (m-1-3)		
			(m-1-2) edge (m-1-3)		
			(m-1-3) edge (m-2-2)
			(m-2-1) edge (m-3-1)
			(m-3-2) edge (m-3-1)
			(m-3-1) edge (m-2-2)
			(m-2-2) edge (m-3-2)
			(m-2-3) edge (m-3-3)		
			(m-3-2) edge (m-3-3)		
			(m-3-3) edge (m-2-2)
			(m-2-2) edge (m-2-1)
			(m-2-2) edge (m-2-3)
			;
			\end{tikzpicture}
		\end{minipage}&$\Or_2$
		\\ \hline
		3&$\A_3, \A_6, \A_9^*$&$\langle \A_3,\A_6,\A_9^*, \A_1^*+\A_2^*, \A_4^*+\A_5^*, \A_7+\A_8 \rangle$&  
		\begin{minipage}{5cm}
			\begin{tikzpicture}           
			\matrix (m) [matrix of math nodes, row sep=0.5em,
			column sep=0.5em]{
				a^*&c&b^*\\
				a&c^*&b\\
				a^*&c&b^*\\
			};			
			\path[-stealth]
			(m-2-1) edge (m-1-1)
			(m-1-2) edge (m-1-1)
			(m-1-1) edge (m-2-2)
			(m-2-2) edge (m-1-2)
			(m-2-3) edge (m-1-3)		
			(m-1-2) edge (m-1-3)		
			(m-1-3) edge (m-2-2)
			(m-2-1) edge (m-3-1)
			(m-3-2) edge (m-3-1)
			(m-3-1) edge (m-2-2)
			(m-2-2) edge (m-3-2)
			(m-2-3) edge (m-3-3)		
			(m-3-2) edge (m-3-3)		
			(m-3-3) edge (m-2-2)
			(m-2-2) edge (m-2-1)
			(m-2-2) edge (m-2-3)
			;
			\end{tikzpicture}
			\begin{tikzpicture}           
			\matrix (m) [matrix of math nodes, row sep=0.5em,
			column sep=0.5em]{
				a^*&b&b^*\\
				c&a^*&a\\
				a^*&b&b^*\\
			};			
			\path[-stealth]
			(m-2-1) edge (m-1-1)
			(m-1-2) edge (m-1-1)
			(m-1-1) edge (m-2-2)
			(m-2-2) edge (m-1-2)
			(m-2-3) edge (m-1-3)		
			(m-1-2) edge (m-1-3)		
			(m-1-3) edge (m-2-2)
			(m-2-1) edge (m-3-1)
			(m-3-2) edge (m-3-1)
			(m-3-1) edge (m-2-2)
			(m-2-2) edge (m-3-2)
			(m-2-3) edge (m-3-3)		
			(m-3-2) edge (m-3-3)		
			(m-3-3) edge (m-2-2)
			(m-2-2) edge (m-2-1)
			(m-2-2) edge (m-2-3)
			;
			\end{tikzpicture}
		\end{minipage}&$\Or_2$
		\\ \hline
		4&
		\begin{minipage}{1cm}
			\begin{align*}
			\A_3, \A_6\\
			\A_7, \A_8\\
			\A_9^*
			\end{align*}
		\end{minipage}
		& 
		\begin{minipage}{4cm}
			\begin{align*}
			\langle \A_3, \A_6, \A_9^*, \A_1^*+\A_2^*, \A_7+\A_8, \A_4^*+\A_5^*\rangle\\	
			\langle \A_7, \A_8, \A_9^*, \A_1^*+\A_4^*, \A_2^*+\A_5^*, \A_3+\A_6\rangle\\
			\langle \A_9^*, \A_3+\A_6, \A_7+\A_8, \A_1^*+\A_2^*+\A_4^*+\A_5^*\rangle
			\end{align*}
		\end{minipage}
		&
		\begin{minipage}{5cm}
			\begin{tikzpicture}           
			\matrix (m) [matrix of math nodes, row sep=0.5em,
			column sep=0.5em]{
				a^*&b&a^*\\
				c&d^*&c\\
				a^*&b&a^*\\
			};			
			\path[-stealth]
			(m-2-1) edge (m-1-1)
			(m-1-2) edge (m-1-1)
			(m-1-1) edge (m-2-2)
			(m-2-2) edge (m-1-2)
			(m-2-3) edge (m-1-3)		
			(m-1-2) edge (m-1-3)		
			(m-1-3) edge (m-2-2)
			(m-2-1) edge (m-3-1)
			(m-3-2) edge (m-3-1)
			(m-3-1) edge (m-2-2)
			(m-2-2) edge (m-3-2)
			(m-2-3) edge (m-3-3)		
			(m-3-2) edge (m-3-3)		
			(m-3-3) edge (m-2-2)
			(m-2-2) edge (m-2-1)
			(m-2-2) edge (m-2-3)
			;		
			\end{tikzpicture}
		\end{minipage}&$\Or_3$\\
		\hline
		4&$\A_7, \A_8, \A_9^*$&$\langle \A_7,\A_8,\A_9^*, \A_1^*+\A_4^*,\A_2^*+\A_5^*, \A_3+\A_6 \rangle$&  
		\begin{minipage}{5cm}
			\begin{tikzpicture}           
			\matrix (m) [matrix of math nodes, row sep=0.5em,
			column sep=0.5em]{
				c^*&d&c^*\\
				a&b^*&a\\
				a^*&b&a^*\\
			};			
			\path[-stealth]
			(m-2-1) edge (m-1-1)
			(m-1-2) edge (m-1-1)
			(m-1-1) edge (m-2-2)
			(m-2-2) edge (m-1-2)
			(m-2-3) edge (m-1-3)		
			(m-1-2) edge (m-1-3)		
			(m-1-3) edge (m-2-2)
			(m-2-1) edge (m-3-1)
			(m-3-2) edge (m-3-1)
			(m-3-1) edge (m-2-2)
			(m-2-2) edge (m-3-2)
			(m-2-3) edge (m-3-3)		
			(m-3-2) edge (m-3-3)		
			(m-3-3) edge (m-2-2)
			(m-2-2) edge (m-2-1)
			(m-2-2) edge (m-2-3)
			;
			\end{tikzpicture}
			\begin{tikzpicture}           
			\matrix (m) [matrix of math nodes, row sep=0.5em,
			column sep=0.5em]{
				a^*&d&a^*\\
				c&b^*&c\\
				b^*&a&b^*\\
			};			
			\path[-stealth]
			(m-2-1) edge (m-1-1)
			(m-1-2) edge (m-1-1)
			(m-1-1) edge (m-2-2)
			(m-2-2) edge (m-1-2)
			(m-2-3) edge (m-1-3)		
			(m-1-2) edge (m-1-3)		
			(m-1-3) edge (m-2-2)
			(m-2-1) edge (m-3-1)
			(m-3-2) edge (m-3-1)
			(m-3-1) edge (m-2-2)
			(m-2-2) edge (m-3-2)
			(m-2-3) edge (m-3-3)		
			(m-3-2) edge (m-3-3)		
			(m-3-3) edge (m-2-2)
			(m-2-2) edge (m-2-1)
			(m-2-2) edge (m-2-3)
			;
			\end{tikzpicture}
		\end{minipage}&$\Or_2$
		\\ \hline
		4&$\A_3, \A_6, \A_9^*$&$\langle \A_3,\A_6,\A_9^*, \A_1^*+\A_2^*, \A_4^*+\A_5^*, \A_7+\A_8 \rangle$&  
		\begin{minipage}{5cm}
			\begin{tikzpicture}           
			\matrix (m) [matrix of math nodes, row sep=0.5em,
			column sep=0.5em]{
				b^*&a&a^*\\
				d&c^*&c\\
				b^*&a&a^*\\
			};			
			\path[-stealth]
			(m-2-1) edge (m-1-1)
			(m-1-2) edge (m-1-1)
			(m-1-1) edge (m-2-2)
			(m-2-2) edge (m-1-2)
			(m-2-3) edge (m-1-3)		
			(m-1-2) edge (m-1-3)		
			(m-1-3) edge (m-2-2)
			(m-2-1) edge (m-3-1)
			(m-3-2) edge (m-3-1)
			(m-3-1) edge (m-2-2)
			(m-2-2) edge (m-3-2)
			(m-2-3) edge (m-3-3)		
			(m-3-2) edge (m-3-3)		
			(m-3-3) edge (m-2-2)
			(m-2-2) edge (m-2-1)
			(m-2-2) edge (m-2-3)
			;
			\end{tikzpicture}
			\begin{tikzpicture}           
			\matrix (m) [matrix of math nodes, row sep=0.5em,
			column sep=0.5em]{
				a^*&c&b^*\\
				d&b^*&a\\
				a^*&c&b^*\\
			};			
			\path[-stealth]
			(m-2-1) edge (m-1-1)
			(m-1-2) edge (m-1-1)
			(m-1-1) edge (m-2-2)
			(m-2-2) edge (m-1-2)
			(m-2-3) edge (m-1-3)		
			(m-1-2) edge (m-1-3)		
			(m-1-3) edge (m-2-2)
			(m-2-1) edge (m-3-1)
			(m-3-2) edge (m-3-1)
			(m-3-1) edge (m-2-2)
			(m-2-2) edge (m-3-2)
			(m-2-3) edge (m-3-3)		
			(m-3-2) edge (m-3-3)		
			(m-3-3) edge (m-2-2)
			(m-2-2) edge (m-2-1)
			(m-2-2) edge (m-2-3)
			;
			\end{tikzpicture}
		\end{minipage}&$\Or_2$
		\\ \hline
		
		5&$\A_7, \A_8, \A_9^*$&$\langle \A_7,\A_8,\A_9^*, \A_1^*+\A_4^*,\A_2^*+\A_5^*, \A_3+\A_6 \rangle$&  
		\begin{minipage}{5cm}
			\begin{tikzpicture}           
			\matrix (m) [matrix of math nodes, row sep=0.5em,
			column sep=0.5em]{
				a^*&d&a^*\\
				c&e^*&c\\
				b^*&a&b^*\\
			};			
			\path[-stealth]
			(m-2-1) edge (m-1-1)
			(m-1-2) edge (m-1-1)
			(m-1-1) edge (m-2-2)
			(m-2-2) edge (m-1-2)
			(m-2-3) edge (m-1-3)		
			(m-1-2) edge (m-1-3)		
			(m-1-3) edge (m-2-2)
			(m-2-1) edge (m-3-1)
			(m-3-2) edge (m-3-1)
			(m-3-1) edge (m-2-2)
			(m-2-2) edge (m-3-2)
			(m-2-3) edge (m-3-3)		
			(m-3-2) edge (m-3-3)		
			(m-3-3) edge (m-2-2)
			(m-2-2) edge (m-2-1)
			(m-2-2) edge (m-2-3)
			;
			\end{tikzpicture}\begin{tikzpicture}           
			\matrix (m) [matrix of math nodes, row sep=0.5em,
			column sep=0.5em]{
				a^*&d&a^*\\
				c&a^*&c\\
				b^*&e&b^*\\
			};			
			\path[-stealth]
			(m-2-1) edge (m-1-1)
			(m-1-2) edge (m-1-1)
			(m-1-1) edge (m-2-2)
			(m-2-2) edge (m-1-2)
			(m-2-3) edge (m-1-3)		
			(m-1-2) edge (m-1-3)		
			(m-1-3) edge (m-2-2)
			(m-2-1) edge (m-3-1)
			(m-3-2) edge (m-3-1)
			(m-3-1) edge (m-2-2)
			(m-2-2) edge (m-3-2)
			(m-2-3) edge (m-3-3)		
			(m-3-2) edge (m-3-3)		
			(m-3-3) edge (m-2-2)
			(m-2-2) edge (m-2-1)
			(m-2-2) edge (m-2-3)
			;
			\end{tikzpicture}\begin{tikzpicture}           
			\matrix (m) [matrix of math nodes, row sep=0.5em,
			column sep=0.5em]{
				b^*&d&b^*\\
				c&a^*&c\\
				a^*&e&a^*\\
			};			
			\path[-stealth]
			(m-2-1) edge (m-1-1)
			(m-1-2) edge (m-1-1)
			(m-1-1) edge (m-2-2)
			(m-2-2) edge (m-1-2)
			(m-2-3) edge (m-1-3)		
			(m-1-2) edge (m-1-3)		
			(m-1-3) edge (m-2-2)
			(m-2-1) edge (m-3-1)
			(m-3-2) edge (m-3-1)
			(m-3-1) edge (m-2-2)
			(m-2-2) edge (m-3-2)
			(m-2-3) edge (m-3-3)		
			(m-3-2) edge (m-3-3)		
			(m-3-3) edge (m-2-2)
			(m-2-2) edge (m-2-1)
			(m-2-2) edge (m-2-3)
			;	
			\end{tikzpicture}
		\end{minipage}&$\Or_2$
		\\ \hline
		
		5&$\A_3, \A_6, \A_9^*$&$\langle \A_3,\A_6,\A_9^*, \A_1^*+\A_2^*, \A_4^*+\A_5^*, \A_7+\A_8 \rangle$&  
		\begin{minipage}{5cm}
			\begin{tikzpicture}           
			\matrix (m) [matrix of math nodes, row sep=0.5em,
			column sep=0.5em]{
				a^*&c&b^*\\
				d&e^*&a\\
				a^*&c&b^*\\
			};			
			\path[-stealth]
			(m-2-1) edge (m-1-1)
			(m-1-2) edge (m-1-1)
			(m-1-1) edge (m-2-2)
			(m-2-2) edge (m-1-2)
			(m-2-3) edge (m-1-3)		
			(m-1-2) edge (m-1-3)		
			(m-1-3) edge (m-2-2)
			(m-2-1) edge (m-3-1)
			(m-3-2) edge (m-3-1)
			(m-3-1) edge (m-2-2)
			(m-2-2) edge (m-3-2)
			(m-2-3) edge (m-3-3)		
			(m-3-2) edge (m-3-3)		
			(m-3-3) edge (m-2-2)
			(m-2-2) edge (m-2-1)
			(m-2-2) edge (m-2-3)
			;	
			\end{tikzpicture}\begin{tikzpicture}           
			\matrix (m) [matrix of math nodes, row sep=0.5em,
			column sep=0.5em]{
				a^*&c&b^*\\
				d&a^*&e\\
				a^*&c&b^*\\
			};			
			\path[-stealth]
			(m-2-1) edge (m-1-1)
			(m-1-2) edge (m-1-1)
			(m-1-1) edge (m-2-2)
			(m-2-2) edge (m-1-2)
			(m-2-3) edge (m-1-3)		
			(m-1-2) edge (m-1-3)		
			(m-1-3) edge (m-2-2)
			(m-2-1) edge (m-3-1)
			(m-3-2) edge (m-3-1)
			(m-3-1) edge (m-2-2)
			(m-2-2) edge (m-3-2)
			(m-2-3) edge (m-3-3)		
			(m-3-2) edge (m-3-3)		
			(m-3-3) edge (m-2-2)
			(m-2-2) edge (m-2-1)
			(m-2-2) edge (m-2-3)
			;
			\end{tikzpicture}\begin{tikzpicture}           
			\matrix (m) [matrix of math nodes, row sep=0.5em,
			column sep=0.5em]{
				b^*&c&a^*\\
				d&a^*&e\\
				b^*&c&a^*\\
			};			
			\path[-stealth]
			(m-2-1) edge (m-1-1)
			(m-1-2) edge (m-1-1)
			(m-1-1) edge (m-2-2)
			(m-2-2) edge (m-1-2)
			(m-2-3) edge (m-1-3)		
			(m-1-2) edge (m-1-3)		
			(m-1-3) edge (m-2-2)
			(m-2-1) edge (m-3-1)
			(m-3-2) edge (m-3-1)
			(m-3-1) edge (m-2-2)
			(m-2-2) edge (m-3-2)
			(m-2-3) edge (m-3-3)		
			(m-3-2) edge (m-3-3)		
			(m-3-3) edge (m-2-2)
			(m-2-2) edge (m-2-1)
			(m-2-2) edge (m-2-3)
			;
			\end{tikzpicture}
		\end{minipage}&$\Or_2$
		\\ \hline
		
		6&$\A_7, \A_8, \A_9^*$&$\langle \A_7,\A_8,\A_9^*, \A_1^*+\A_4^*,\A_2^*+\A_5^*, \A_3+\A_6 \rangle$&  
		\begin{minipage}{5cm}
			\begin{tikzpicture}           
			\matrix (m) [matrix of math nodes, row sep=0.5em,
			column sep=0.5em]{
				a^*&d&a^*\\
				c&e^*&c\\
				b^*&f&b^*\\
			};			
			\path[-stealth]
			(m-2-1) edge (m-1-1)
			(m-1-2) edge (m-1-1)
			(m-1-1) edge (m-2-2)
			(m-2-2) edge (m-1-2)
			(m-2-3) edge (m-1-3)		
			(m-1-2) edge (m-1-3)		
			(m-1-3) edge (m-2-2)
			(m-2-1) edge (m-3-1)
			(m-3-2) edge (m-3-1)
			(m-3-1) edge (m-2-2)
			(m-2-2) edge (m-3-2)
			(m-2-3) edge (m-3-3)		
			(m-3-2) edge (m-3-3)		
			(m-3-3) edge (m-2-2)
			(m-2-2) edge (m-2-1)
			(m-2-2) edge (m-2-3)
			;
			\end{tikzpicture}
		\end{minipage}&$\Or_2$\\ \hline
		
		6&$\A_3, \A_6, \A_9^*$&$\langle \A_3,\A_6,\A_9^*, \A_1^*+\A_2^*, \A_4^*+\A_5^*, \A_7+\A_8 \rangle$&  
		\begin{minipage}{5cm}
			\begin{tikzpicture}           
			\matrix (m) [matrix of math nodes, row sep=0.5em,
			column sep=0.5em]{
				a^*&c&b^*\\
				d&e^*&f\\
				a^*&c&b^*\\
			};			
			\path[-stealth]
			(m-2-1) edge (m-1-1)
			(m-1-2) edge (m-1-1)
			(m-1-1) edge (m-2-2)
			(m-2-2) edge (m-1-2)
			(m-2-3) edge (m-1-3)		
			(m-1-2) edge (m-1-3)		
			(m-1-3) edge (m-2-2)
			(m-2-1) edge (m-3-1)
			(m-3-2) edge (m-3-1)
			(m-3-1) edge (m-2-2)
			(m-2-2) edge (m-3-2)
			(m-2-3) edge (m-3-3)		
			(m-3-2) edge (m-3-3)		
			(m-3-3) edge (m-2-2)
			(m-2-2) edge (m-2-1)
			(m-2-2) edge (m-2-3)
			;
			\end{tikzpicture}
		\end{minipage}&$\Or_2$
		\\ \hline
	\end{tabular}
	\caption{Monodromy for $h(x)+g(y)\in \R[x,y]_{d\leq 4}$ and Dynkin diagram (\ref{Dynkincase2})}
	\label{case4b}
\end{table}

\clearpage
Let $f(x,y)=h(y)+g(y)$ be a polynomial of degree 4,  we consider the vector space $V_f=H_1(f^{-1}(b), \Q)$ with the basis given by the vanishing cycles $\{\alpha_i\}_{i=1,\ldots, 9}$. Let  $G_f=\pi_1(\C \setminus\{a_1,\ldots, a_9\})$ be a  free group acting on $V_f$ by monodromy. For polynomials $f,f'$, we  relate $f$ and $f'$ if there is a permutation $\varphi$ of the set $\{\alpha_i\}_{i=1,\ldots, 9}$, such that
$$\text{span}(G_f\cdot (\varphi(\alpha_i))=\varphi(\text{span}(G_{f'}\cdot \alpha_i))\text{,  for }i=1,\ldots,9.$$
Let us introduce the  suggestive notation $v_{11}, v_{12}, v_{13},$ $   v_{21}, v_{22}, v_{23}, v_{31}, v_{32}, v_{33}$ as a basis for $V_f$. Then, we compute the equivalence classes $[f]$ of the polynomials in $\R[x]_{\leq 4}\oplus \R[y]_{\leq 4}$ with real critical points. From Tables \ref{case2b} and \ref{case4b}, we conclude that there are 5 equivalence classes of these polynomials, they are
\begin{small}
\begin{itemize}
	\item $\Or_0$: 
	
	$\text{span}(G_f\cdot v_{ij})=V_f$, for $i,j=1,2,3.$
	
	\item $\Or_1$: In this cases  $G_f$ is a free group generated by a matrix $M$, and 
	
	$\text{span}(G_f\cdot v_{ij})=\langle M^k v_{ij}\rangle $ with $k=0,\ldots, 4$ and $(i,j)\neq (2,2)$.
	
	$\text{span}(G_f\cdot v_{22})=\langle M^k v_{ij}\rangle $ with $k=0,\ldots, 2.$ 
	
	\item $\Or_2$:
	
	$\text{span}(G_f\cdot v_{21})=\text{span}(G_f\cdot v_{22})=\text{span}(G_f \cdot v_{23})=\langle v_{21}, v_{22}, v_{23}, v_{11}+v_{31}, v_{12}+v_{32}, v_{13}+v_{33} \rangle.$
	
	$\text{span}(G_f\cdot v_{ij})=V_f$, in other cases.
	
	\item $\Or_3$:
	
	$\text{span}(G_f\cdot v_{21})=\text{span}(G_f\cdot v_{23})=\langle v_{21}, v_{22}, v_{23}, v_{11}+v_{31}, v_{12}+v_{32}, v_{13}+v_{33}.  \rangle.$
	
	$\text{span}(G_f\cdot v_{12})=\text{span}(G_f\cdot v_{32})=\langle v_{12}, v_{22}, v_{32}, v_{11}+v_{13}, v_{21}+v_{23}, v_{31}+v_{33} \rangle.$
	
	$\text{span}(G_f\cdot v_{22})=\langle v_{22}, v_{12}+v_{32}, v_{21}+v_{23}, v_{11}+v_{13}+v_{31}+v_{33} \rangle.$
	
	$\text{span}(G_f\cdot v_{ij})=V_f$, in other cases.
	
%	\item $\Or_4$:
	
%	$\text{span}(G_f\cdot v_{22})=\langle v_{13}, v_{22}, v_{31}, v_{11}+v_{33}, v_{12}-v_{23}, v_{21}-v_{32} \rangle.$	
	
%$\text{span}(G_f\cdot v_{ij})=V_f$, in other cases.
	
	\item 
	
	$\Or_4$:	
	
	$\text{span}(G_f\cdot v_{21})=\text{span}(G_f\cdot v_{23})=\langle v_{21}, v_{22}, v_{23}, v_{11}+v_{31}, v_{12}+v_{32}, v_{13}+v_{33}.  \rangle.$	
	
	$\text{span}(G_f\cdot v_{12})=\text{span}(G_f\cdot v_{32})=\langle v_{12}, v_{22}, v_{32}, v_{11}+v_{13}, v_{21}+v_{23}, v_{31}+v_{33} \rangle.$
	
	$\text{span}(G_f\cdot v_{11})=\text{span}(G_f\cdot v_{33})=\langle v_{11}, v_{22}, v_{33}, v_{12}-v_{21}, v_{23}-v_{32}, v_{13}+v_{31}, v_{12}+v_{21}+v_{23}+v_{32} \rangle.$
	
	$\text{span}(G_f\cdot v_{13})= \text{span}(G_f\cdot v_{31})=\langle v_{13}, v_{22}, v_{31}, v_{21}-v_{32}, v_{12}-v_{23}, v_{11}+v_{33}, v_{12}+v_{21}+v_{23}+v_{32} \rangle.$	
	
	$\text{span}(G_f\cdot v_{22})=\langle  v_{22} , v_{12}+v_{32}, v_{21}+v_{23}, v_{11}+v_{13}+v_{31}+v_{33} \rangle.$
\end{itemize}
\end{small}
These equivalence classes of polynomial $f(x,y)=h(x)+g(y)$ in terms of the subspaces generated by the orbit of monodromy action, can be written in terms of the polynomials $h$ and $g$. That is showed in the  theorem \ref{orbitspacehg}. We also consider polynomials up to linear transformation, because these  do not change the monodromy action. For a polynomial  $h\in\C[x]_{\leq d}$ and a partition $(d_1, d_2, \ldots, d_M)$ of $d-1$, we say that $h$ has \textbf{critical values degree} $(d_1, d_2, \ldots, d_M)$ if it has $M$ different critical values $c_1, c_2,\ldots, c_M$ and for any $i=1\ldots, M$ there are $d_i$ critical points over $c_i$, counted with multiplicity. The next lemma is proved in appendix \ref{Appenix1}, and we  give an algorithm to compute the ideals.
\begin{lemma}
\label{subalgevaiertyId}
	Given a positive integer $d$ and a partition $(d_1, d_2,\ldots, d_M)$ of $d-1$, the set of polynomials in $\C[x]_{\leq d}$ with critical values degree $(d_1, d_2, \ldots, d_M)$ is an algebraic subvariety of $\C[x]_{\leq d}$. Its ideal associated is denoted by $I_{(d_1, d_2, \ldots, d_M)}$.
\end{lemma}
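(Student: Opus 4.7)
The plan is to describe the locus in question as the projection of an explicit algebraic variety in a larger parameter space, and then to obtain its defining ideal by elimination theory.

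For $h(x)=\sum_{i=0}^{d}a_{i}x^{i}$ with the coefficients $a_{0},\dots,a_{d}$ regarded as indeterminates, I set
\[
\Delta_{h}(c)\;:=\;\operatorname{Res}_{x}\!\bigl(h'(x),\,h(x)-c\bigr)\;\in\;\C[a_{0},\dots,a_{d}][c].
\]
The product formula for the resultant gives $\Delta_{h}(c)=(d\,a_{d})^{d}\prod_{j}\bigl(h(p_{j})-c\bigr)^{m_{j}}$ whenever $h'=d\,a_{d}\prod_{j}(x-p_{j})^{m_{j}}$ with distinct $p_{j}$ of multiplicities $m_{j}$. Hence, viewed as a polynomial in $c$, $\Delta_{h}$ has degree $d-1$, and its roots are exactly the critical values of $h$, each appearing with multiplicity equal to the number of critical points (counted with multiplicity) lying over it. Consequently, $h$ has critical values degree $(d_{1},\dots,d_{M})$ if and only if there exist pairwise distinct $c_{1},\dots,c_{M}\in\C$ with
\[
\Delta_{h}(c)\;=\;(-1)^{d-1}(d\,a_{d})^{d}\prod_{i=1}^{M}(c-c_{i})^{d_{i}}.
\]

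Introducing auxiliary indeterminates $c_{1},\dots,c_{M}$, I would expand the identity
\[
\Delta_{h}(c)\;-\;(-1)^{d-1}(d\,a_{d})^{d}\prod_{i=1}^{M}(c-c_{i})^{d_{i}}\;=\;\sum_{k=0}^{d-2}F_{k}(a_{0},\dots,a_{d},c_{1},\dots,c_{M})\,c^{k}
\]
to obtain an ideal $J=(F_{0},\dots,F_{d-2})\subset\C[a_{0},\dots,a_{d},c_{1},\dots,c_{M}]$, whose vanishing locus $Z\subset\C^{d+1}\times\C^{M}$ is exactly the set of pairs $\bigl(h,(c_{1},\dots,c_{M})\bigr)$ realising the displayed factorization. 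Projecting onto $\C^{d+1}$, the image $\pi(Z)$ captures those $h$ whose critical value polynomial admits such a factorization, possibly with some $c_{i}$ coinciding; so the Zariski closure of the locus of polynomials with critical values degree $(d_{1},\dots,d_{M})$ equals $\overline{\pi(Z)}$. By the fundamental theorem of elimination, this closure is the vanishing locus of the elimination ideal
\[
I_{(d_{1},\dots,d_{M})}\;:=\;J\cap\C[a_{0},\dots,a_{d}],
\]
which can be computed effectively via a Gr\"obner basis with a term order eliminating $c_{1},\dots,c_{M}$, or alternatively by iterated resultants in the $c_{i}$.

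The point that requires care, and which I would flag as the principal obstacle to implementing the algorithm cleanly, is the discrepancy between $\pi(Z)$, its Zariski closure, and the stratum of polynomials with \emph{exact} critical values degree $(d_{1},\dots,d_{M})$. Allowing the auxiliary $c_{i}$ to coincide produces polynomials whose critical values partition is strictly coarser than $(d_{1},\dots,d_{M})$, so the exact stratum is only locally closed: it is obtained from the vanishing locus of $I_{(d_{1},\dots,d_{M})}$ by removing the finitely many subvarieties cut out by the ideals $I_{\lambda'}$ for partitions $\lambda'$ of $d-1$ strictly coarser than $(d_{1},\dots,d_{M})$. This recursive stratification by partition type is what the algorithm in the appendix has to track alongside the Gr\"obner elimination.
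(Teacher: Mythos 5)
Your proposal is correct and follows essentially the same route as the paper: both encode the critical values with multiplicities as the roots of the discriminant of $h-c$ viewed as a degree-$(d-1)$ polynomial in $c$, impose the partition via auxiliary root variables, and extract the defining ideal by Gr\"obner-basis elimination (the paper phrases this as implicitization of the Vieta map restricted to the linear subvariety $t_{i_1}=\cdots=t_{i_{d_j}}$, which is equivalent to your direct factorization with the $M$ variables $c_1,\dots,c_M$). Your closing caveat that one only obtains the Zariski closure of the exact stratum matches the paper, which likewise only claims the closure.
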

For example fo $d=4$, we consider $h(x):=x^4+r_3x^3+r_2x^2+r_1x+r_0$. Since translations in the abscissa and in the ordinate do not change the monodromy action, we can suppose that $h(x):=x^4+r_2x^2+r_1x$. Thus, we have
\begin{align*}
I_{(3,0)}&=\langle r_1, r_2 \rangle,\\
I_{(2,1)}&=\langle r_1\rangle \cap \langle 27 r_1^2+8r_2^3\rangle=:\langle r_1\rangle \cap \langle H\rangle,\\
I_{(1,1,1)}&=0.
\end{align*}
Another interesting example, is when we consider the polynomials $h(x)=x^4+r_2x^2+r_1x$, $g(y)=y^4+s_2y^2+s_1y$ and we suppose that the critical values of $h$ are equal to the critical values of $g$. In this case, we have a subvariety of $\C[x,y]_{\leq d}$ given by the ideal $$\langle r_2-s_2, r_1-s_1\rangle \cap \langle r_2+s_2, r_1^2+s_1^2\rangle \cap \langle r_2-s_2,r_1+s_1\rangle \cap \langle s_2,s_1,r_2,r_1\rangle.$$
\begin{theorem}
	\label{orbitspacehg}
	Let $f(x,y)=h(x)+g(y)$, where  $h\in\R[x]_{\leq 4}$ and $g\in \R[y]_{\leq 4}$ are polynomials with real critical points. There is a characterization of the equivalence class of $f$ in terms of $h,g$ as follows, 
	\begin{enumerate}
		\item $[f]\in\Or_{1}$ iff $f(x,y)=x^4+y^4$.
		\item $[f]\in \Or_{2}$ iff $f(x,y)=(h_2\circ h_1)(x)+g(y)$, where $h_1,h_2\in \R[x]_{\leq 2}$ and  $g$ is  not decomposable.
		\item $[f]\in\Or_{3}$ iff $f(x,y)=(h_2\circ h_1)(x)+(g_2\circ g_1)(y)$, where $h_1,h_2\in \R[x]_{\leq 2}$, $g_1,g_2\in \R[y]_{\leq 2}$.
%		\item $[f]\in\Or_4$ iff $f(x,y)=h(x)+h(\pm y)$, where $h$ is  not decomposable.
		\item $[f]\in\Or_4$ iff $f(x,y)=(h_2\circ h_1)(x)+(h_2\circ h_1)(\pm y)$, where $h_1,h_2\in \R[x]_{\leq 2}$.
		\item $[f]\in\Or_{0}$ iff $h(x)$ and $g(y)$ are   not decomposable.
	\end{enumerate}
\end{theorem}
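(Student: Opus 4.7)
The plan is to read off the equivalence class $[f]$ from the 1-dimensional Dynkin diagram of $f$ and then translate the combinatorial data into algebraic conditions on $h$ and $g$. Since Tables \ref{case2b} and \ref{case4b} already enumerate all possible coincidence patterns among the nine critical values $a_{ij}=c_i^h+c_j^g$ and compute the resulting subspaces $\text{span}(G_f\cdot\alpha_i)$, it suffices to match each entry against the algebraic conditions in the statement (the diagram \eqref{Dynkincase2} being equivalent to \eqref{Dynkincase1} under $f\mapsto -f$). The key translation device is Proposition \ref{pullbackhorsymm}: horizontal (resp.\ vertical) symmetry of the Dynkin diagram is equivalent to $g$ (resp.\ $h$) being decomposable as $g_2\circ g_1$ with $\deg g_i>1$. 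Because $\deg h=\deg g=4$, such a decomposition exists iff the polynomial has only two distinct critical values (one simple, one of multiplicity two), and the extreme case of a single critical value corresponds, up to affine equivalence, to $x^4$.

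Classes $\Or_0$, $\Or_1$, $\Or_2$, $\Or_3$ then follow directly. Class $\Or_1$ requires the entire monodromy group to be cyclic, forcing all nine $a_{ij}$ to coincide, hence $c_1^h=c_2^h=c_3^h$ and $c_1^g=c_2^g=c_3^g$, so $f\sim x^4+y^4$. Class $\Or_0$ is the generic case with no symmetry, i.e.\ both $h$ and $g$ indecomposable. Class $\Or_2$ has exactly one of the two symmetries, corresponding to exactly one of $h,g$ being decomposable; the real-critical-points hypothesis lets us choose the factors in $\R[\cdot]_{\leq 2}$. Class $\Or_3$ has both symmetries and therefore both $h$ and $g$ decomposable.

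The main obstacle is class $\Or_4$, whose Dynkin diagram exhibits, beyond horizontal and vertical symmetry, the additional diagonal coincidence $a_1=a_9$. Using the basic symmetry equalities $c_1^h=c_2^h$ and $c_2^g=c_3^g$, this diagonal relation simplifies to
\begin{equation*}
c_3^h-c_1^h=c_1^g-c_2^g,
\end{equation*}
i.e.\ the ``gap'' between the two distinct critical values of $h$ must equal (up to sign) that of $g$. After centering $h_1,\tilde g_1$ at the origin in the decompositions $h=h_2\circ h_1$ and $g=\tilde g_2\circ\tilde g_1$, this gap is an invariant of the outer quadratic, and its equality forces $h_2=\tilde g_2$ up to additive constants; equivalently, $f(x,y)=(h_2\circ h_1)(x)+(h_2\circ h_1)(\pm y)$, the $\pm$ reflecting the residual freedom in the inner quadratic factor of $g$. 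The ideals of Lemma \ref{subalgevaiertyId} in degree $4$, computed as in the example following that lemma, make this translation completely explicit. The converses for each of the five classes are routine: starting from the stated algebraic form of $f$, one computes its critical values directly and matches the resulting Dynkin diagram against the corresponding entry in Tables \ref{case2b} or \ref{case4b}.
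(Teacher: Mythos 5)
Your overall strategy is the same as the paper's: read the class $[f]$ off the critical-value coincidence pattern in Tables \ref{case2b} and \ref{case4b} and translate it into decomposability of $h$ and $g$, with the sufficiency direction checked by direct computation. The $\Or_1$, $\Or_0$ and $\Or_4$ arguments match the paper's in substance (your reduction of the diagonal relation $a_1=a_9$ to the equality of the gaps $c_3^h-c_1^h=c_1^g-c_2^g$ is a slightly different but equivalent route to the paper's conclusion that, after a translation, the critical values of $h$ and $g$ coincide, whence $g=h(\pm\,\cdot)$).

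There is, however, a genuine gap in the translation device you rely on for classes $\Or_2$, $\Or_3$ and $\Or_4$. You assert that a quartic is decomposable \emph{iff} it has exactly two distinct critical values, one simple and one of multiplicity two. This biconditional is false: the locus of quartics $h(x)=x^4+r_2x^2+r_1x$ with critical values degree $(2,1)$ is the reducible variety $I_{(2,1)}=\langle r_1\rangle\cap\langle H\rangle$ with $H=27r_1^2+8r_2^3$, and a polynomial on the component $\textbf{V}(H)\setminus\textbf{V}(I_{(3,0)})$ (i.e.\ with $r_1\neq 0$ and a degenerate critical point) has two distinct critical values of multiplicities $2$ and $1$ but is \emph{not} decomposable. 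Consequently, from the coincidence $c_1^h=c_2^h$ read off a table entry you cannot immediately conclude $r_1=0$. The paper closes exactly this loophole: it observes that on $\textbf{V}(H)$ the two coincident critical values belong to \emph{adjacent} (intersecting) vanishing cycles, whereas the patterns occurring in the tables require the coincidence $c_1^h=c_2^h$ with $c_1^h\neq c_3^h$ for the two \emph{non-adjacent} cycles in the $0$-dimensional Dynkin diagram $\gamma_1\cdots\gamma_3\cdots\gamma_2$ (see Figure \ref{2critical}); this rules out the component $\langle H\rangle$ and forces $r_1=0$. Your proof needs this step (or an equivalent appeal to the precise form of Definition \ref{def:horsym}, which encodes the adjacency of the coincident critical values) to make the necessity direction of parts 2--4 complete.
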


\begin{proof}
	It is easy to show that the conditions on $h$ and $g$ are sufficient conditions. Following, we show that they are necessary conditions. Since we consider polynomials up to linear transformation, we can suppose $h(x)=x^4+r_2x^2+r_1x$, $g(y)=y^4+s_2y^2+s_1y$. Thus, for $h$ and $g$ be decomposable polynomials it is necessary that $r_1=0$ and $s_1=0$, respectively. 
	\begin{enumerate}
		\item For $[f]\in\Or_1$, the polynomial $f(x,y)$ has a critical value, thus $h$ and $g$ have only one critical value. Since $h\in I_{(3,0)}$, then $h(x)=x^4$, analogously for $g$.
		
		\item When  $[f]\in\Or_2$ we have the next possibilities: If the Dynkin diagram is (\ref{Dynkincase1}), then the critical values satisfy $a_1=a_4, a_2=a_5, a_3=a_6$ or $a_2=a_3, a_5=a_6, a_8=a_9$. If the Dynkin diagram is (\ref{Dynkincase2}), then the critical values satisfy $a_1=a_4, a_2=a_5, a_3=a_6$ or $a_1=a_2, a_4=a_5, a_7=a_8$. The first conditions in both Dynkin diagrams implies that $c_1^h=c_2^h$, the others conditions implies  $c_2^g=c_3^g$ and $c_1^g=c_3^g$, respectively.  Without loss of generality we consider $c_1^h=c_2^h$, then $h\in I_{(2,1)}$, and recall that the 0-dimensional Dynkin diagram for $h$ in this case is $\gamma_1\cdots \gamma_3\cdots  \gamma_2$.
		
		Furthermore, the discriminant of $h'(x)$ is equal to $-16H$, thus $\textbf{V}(H)$  corresponds to the polynomials with at most 2 critical points. Hence, the polynomials in $\textbf{V}(H)\setminus \textbf{V}(I_{(3,0)})$ are polynomials that have two different critical values and two critical points, and it is not the case of $c_1^h=c_2^h$ and $c_1^h\neq c_3^h$ see the Figure \ref{2critical}. Therefore, the polynomials in $\Or_2$ satisfy that $h(x)=x^4+r_2x^2$. Then $h(x)=h_2(h_1(x))$ where $h_1(x)=x^2$ and $h_2(x)=x(x+r_2)$.
		
		\item If $[f]\in \Or_3$, then the conditions $c_1^h=c_2^h$ and $c_1^g=c_2^g$, with 0-dimensional Dynkin diagram  $\delta_1\cdots \delta_3\cdots \delta_2$ associated to $g$, are satisfied simultaneously (or $c_2^g=c_3^g$, with 0-dimensional Dynkin diagram $\delta_2\cdots \delta_1\cdots \delta_3$ associated to $g$). Hence, analogously to the previous case we have  $f(x,y)=h_2(h_1(x))+g_2(g_1(y))$ where $h_1(x)=x^2$, $h_2(x)=x(x+r_2)$, $g_1(y)=y^2$, $g_2(y)=y(y+s_2)$.
		
		\item  For $[f]\in \Or_4$,  the Dynkin diagram is (\ref{Dynkincase1}) and the critical values of $f$ satisfies the relations $a_1=a_9$, $a_2=a_6$ and $a_4=a_8$, that means $c_1^h+c_1^g=c_3^h+c_3^g$, $c_1^h+c_2^g=c_2^h+c_3^g$ and  $c_2^h+c_1^g=c_3^h+c_2^g$. Thus $c_1^h=c_3^g+k$, $c_2^h=c_2^g+k$ and $c_3^h=c_1^g+k$, where $k=c_ 3^h-c_1^g$. Hence, by doing a translation, we can suppose that the critical values of the polynomial $h$ are equals to the critical values of $g$. Therefore,  $g(x)=h(\pm x)$. 	
		On the other hand, the conditions $a_1=a_4$, $a_2=a_5$, $a_3=a_6$, implies that $h$ is decomposable.
%		\item The condition $[f]\in \Or_5$ happens in the 
%		Dynkin diagram (\ref{Dynkincase1}) when the critical values satisfies $a_1=a_4=a_8=a_9$ and $a_2=a_3=a_5=a_6$. Hence, the conditions on the parameters satisfies that $h$ is a pullback as in $\Or_2$, and $g(x)=h(\pm x)$ as in $\Or_4$.
%		
		\item When $[f]\in\Or_{0}$, the critical values of $h$ are different or $h\in \textbf{V}(H)\setminus \textbf{V}(I_{(3,0)})$. Therefore, $r_1\neq 0$. Analogously for $g$, we conclude that $s_1\neq 0$. 
	\end{enumerate}
\end{proof}

\begin{figure}[h!]
	\centering
	\includegraphics[width=12cm, height=4cm]{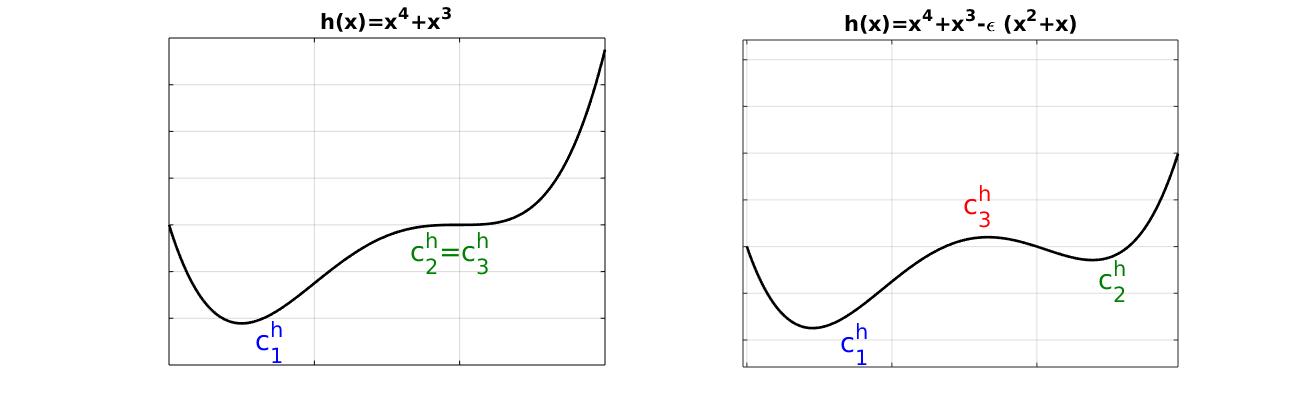}
	\caption{\footnotesize{In the left a polynomial $g\in \R[y]_{\leq 4}$ with two critical values and two critical points. In the right a perturbation of $g$ which separates the critical values. In both cases the enumeration is done according to  section \ref{dynkinrules}.} The vanishing cycle associated to $c_1^h$ and $c_3^h$ always intersect.}
		\label{2critical}
\end{figure}
\textbf{Remark.} Similar to Theorem \ref{ChThm}, if $[f]\in \Or_2,$ then the vanishing cycles  $v_{21}, v_{22}, v_{23}$ are in the kernel of the map 
\begin{equation*}
H_1(f^{-1}(b), \Q)\to H_1(\tilde f^{-1}(b), \Q),\hspace{4mm} \text{ where }\tilde f=h_2(x)+g(y)
\end{equation*} 
coming from the map  $(x,y)\to (h_1(x),y)$. If $[f]\in \Or_3$, then the vanishing cycles $v_{21}, v_{22}, v_{23}$ are as before, and the  vanishing cycles $v_{12}, v_{22}, v_{32}$ are in the kernel of the map 
\begin{equation*}
H_1(f^{-1}(b), \Q)\to H_1(\hat f^{-1}(b), \Q),\hspace{4mm} \text{ where }\hat f=h(x)+g_2(y)
\end{equation*} 
coming from the map  $(x,y)\to (x,g_1(y))$. The other not simple  vanishing cycles  appear with  the symmetry $h(x)=g(\pm 
x)$. Therefore, they may be related with the pullback \begin{align*}
\C^2&\longrightarrow\hspace{6mm}\C^2 \hspace{6mm}\longrightarrow \C\\
(x,y)&\to(x+y,xy)\to \check{f}(\check x, \check y)
\end{align*}
where $\check x=x+ y$, $\check y=xy$ and  some $\check{f}\in \R[\check x,\check y]_{\leq 4}$. So far we do not know a geometrical characterization for these vanishing cycles.

\appendix
\section{Appendix: Algebraic space  $I_{(d_1, d_2,\ldots, d_M)}$}
\label{Appenix1}
In this section we  show that the space of polynomials  $f(x)$ of degree $d$ with a given number of critical values is an algebraic subspace. Actually, we need other conditions in the cardinality of the critical values, it motivates the next definition.
\begin{definition}
	For an integer $d$ and a partition $(d_1, d_2, \ldots, d_M)$ of $d-1$, we say that the polynomial $f(x)\in \C[x]_{\leq d}$ has \textbf{critical values degree} $(d_1, d_2, \ldots, d_M)$ if it has $M$ different critical values $c_1, c_2,\ldots, c_M$ and for any $i=1\ldots, M$ there are $d_i$ critical points over $c_i$, counted with multiplicity.
	\end{definition}
	We show that  the condition of critical values degree for a polynomial can be given in terms of algebraic expressions.
		\begin{proof} [Proof of Lemma \ref{subalgevaiertyId}]
			By  definition of the \textit{discriminant} $\Delta$, see  \cite[\S 10.9]{hodgehossein}, we know that $\xi$ is a critical value of  $f(x)$ if and only if $\Delta(f(x)-\xi)=0$. For $f(x)=x^{d}+r_{d-1}x^{d-1}+\ldots +r_0$, we have that $\Delta_\xi(f):=\Delta(f-\xi)$ is a polynomial $\lambda(\xi)=\A_{d-1}\xi^{d-1}+\A_{d-2}\xi^{d-2}+\ldots +\A_0$. It defines the map
			\begin{align*}
			\C^{d}&\xto{\Delta_\xi} \C^{d-1}\\
			(r_{d-1},\ldots, r_0)&\to (\A_{d-2},\ldots, \A_0). 
			\end{align*}
			The polynomial $\lambda(\xi)$ can also be expressed in terms of the critical values $t_1,\ldots, t_{d-1}$ of $f$, as $\lambda(\xi)=(\xi-t_1)(\xi-t_2)\ldots(\xi-t_{d-1})=(\xi-t_{i_1})^{d_1}(\xi-t_{i_2})^{d_2}\ldots (\xi-t_{i_{M}})^{d_M}$, where we have used the definition of critical values degree. Let $\C^{d-1}\xto{\varphi} \C^{d-1}$ be the map given by the Vieta's formula 
			\begin{align*}
			\C^{d-1}&\xto{\varphi} \C^{d-1}\\
			(t_1, t_2, \ldots, t_{d-1})&\xto{\varphi}(\varepsilon_1\sum_{i}t_i, \varepsilon_2\sum_{i\neq j}t_it_j, \ldots, \varepsilon_{d-1}t_1t_2\ldots t_{d-1}),
			\end{align*}
			where $\varepsilon_j=(-1)^j\A_{d-1}$. Hence,  $\varphi$ take the roots of a polynomial and gives the coefficients of the polynomial. Let $V$ be a subvariety in the domain of $\varphi$ given by $M$ equations of the form $t_{i_1}=t_{i_2}\ldots =t_{i_{d_j}}$ with $j=1,\ldots, M$.  The subvariety $V$ has the information of the critical values degree.
			
			The closure of $\varphi(V)$ is a subvariety of $\C^{d-1}$, we denote it by $W$. The pullback of $W$ by the map $\Delta_\xi$ is a subvariety in $\C^{d}$ in terms of the parameters $r_k$ which is the closure of the space of polynomial in $\C[x]_{\leq d}$ with critical values degree $(d_1, d_2, \ldots d_M).$
			\begin{center}
				\begin{tikzpicture}
				\matrix (m) [matrix of math nodes, row sep=0.5em,
				column sep=1.5em]{
					V\subset \C^{d-1} & W\subset \C^{d-1}  \\ 
					\C^{d}		&	 \\};			
					\path[-stealth]
					(m-1-1) edge [bend right=0] node [above] {$\varphi$}  (m-1-2) 
					(m-2-1) edge [bend right=0] node [below] {$\Delta_\xi$}  (m-1-2) 
					;
					\end{tikzpicture}
					\end{center}
					\end{proof}
	In order to compute an explicit expression for $W$ we use the implicitation  algorithm \cite[\S 3.3]{idealsalgorithms}. Let $v_1, v_2,..., v_s$ be the polynomials which describes $V$, thus $v_i=v_i(t_1,\ldots, t_{d-1})$. Let  $I\subset \C[t_1,\ldots,t_{d-1}, x_1,\ldots,x_{d-1}]$ be the ideal 
	$$I=\langle z_1-\sum t_i, z_2-\sum_{i\neq j}t_it_j,...,z_{d-1}- t_1t_2...t_{d-1},v_1,..., v_s\rangle.$$ 
					
	If $G$ is a Groebner basis of $I$ with respect to lexicographic order $t_1>t_2>...>t_{d-1}>z_1> ...>z_{d-1}$, then $G_z=G\cap \C[z]$ is a Groebner basis of the ideal $I_z:=I\cap\C[z]$. Also $W:=\textbf{V}(I_z)$ is the smallest variety in $\C^{d-1}$ containing $\varphi(V)$.
\section{Numerical supplementary items}
\label{numericalsection}
In this section we provide the explanation of the codes used in the proof of Proposition \ref{freevanishingcycles}.  To start, it is necessary to get the MATLAB's functions MonMatrix and \mbox{VanCycleSub}. These are available in \href{https://github.com/danfelmath/Intersection-matrix-for-polynomials-with-1-crit-value.git}
{https://github.com/danfelmath/Intersection-matrix-for-polynomials-with-1-crit-value.git}.\\

The function MonMatrix computes the monodromy matrix for the polynomial              \begin{equation}
\label{generalpolfibration}
f:=y^{e}+x^d
\end{equation}
 in the basis described in \S \ref{Monfordirectsumonecrivalue}. That is, for each by considering a perturbation of $y^e$ and $x^d$, such that the critical values are different. Thus, we have a real curve similar to the Figure \ref{realcriticalvalues}.  Moreover, we can suppose that the critical points induce the  0-Dynkin diagrams 
\begin{equation*}
	\begin{tikzpicture}           
	\matrix (m) [matrix of math nodes, row sep=0.7em,
	column sep=0.7em]{
\sigma_{l_d+1}&\sigma_1 &\sigma^1_{l_d+2} & \sigma_2& \sigma_{l_d+3} & \sigma_3&\cdots \\
\gamma_{l_e+1}&\gamma_1 &\gamma^1_{l_e+2} & \gamma_2& \gamma_{l_e+3} & \gamma_3&\cdots \\
	};			
	\path[-stealth]
	(m-1-1) edge [-, densely dashed] (m-1-2)
	(m-1-2)	edge [-, densely dashed] (m-1-3)
	(m-1-3)	edge [-, densely dashed] (m-1-4)
	(m-1-4)	edge [-, densely dashed] (m-1-5)
	(m-1-5)	edge [-, densely dashed] (m-1-6)
	(m-2-1) edge [-, densely dashed] (m-2-2)
	(m-2-2)	edge [-, densely dashed] (m-2-3)
	(m-2-3)	edge [-, densely dashed] (m-2-4)
	(m-2-4)	edge [-, densely dashed] (m-2-5)
	(m-2-5)	edge [-, densely dashed] (m-2-6);
	\end{tikzpicture}
\end{equation*}
where $l_d=\lfloor\frac{d-1}{2}\rfloor$ and $l_e=\lfloor\frac{e-1}{2}\rfloor$. The cycles $\sigma_i$ with $i=1,\ldots, d-1$ and $\gamma_j$ with $j=1,\ldots, e-1$  are the 0-dimensional vanishing cycles associated to $x^{d}$ and $y^e$, respectively. The last vanishing cycle on the right in this Dynkin diagram is $\gamma_{l_e}$ or $\gamma^i_{e-1}$, depending on whether
$e$ is odd or even, respectively (analogously for the last $\sigma_i$).   
	
Then, we consider the  basis given by the join cycles of the vanishing cycles $\gamma_{j}*\sigma_{i}$ where $j=1,\ldots, e-1$ and $i=1,\ldots, d-1$. Furthermore, we  consider the orderings  $\sigma_{l_e+1}>\gamma_1>\sigma_{l_e+2}>\sigma_2>\sigma_{l_e+3}>\sigma_3>\cdots,$ and $\gamma_{l_d+1}>\gamma_1>\gamma_{l_d+2}>\gamma_2>\gamma_{l_d+3}>\gamma_3>\cdots,$ for any $i$. The ordering for the join cycles is  given by
\begin{equation*}
\gamma_{j}*\sigma_{i}>\gamma_{j'}*\sigma_{i'} \text{, if and only if, }\sigma_{i}>\sigma_{i'}\text{, or }i=i' \text{ and }\gamma_{j}>\gamma_{j'}.
\end{equation*}
 We use the ordered basis $( \gamma_{j}*\sigma_i, \hspace{2mm}>)$, in order to write the intersection and monodromy matrices associated to the fibration given by \ref{generalpolfibration}. For example, let $f(x,y)=x^6+y^4$, therefore the ordered basis is $$\gamma_2*\sigma_3, \gamma_1*\sigma_3, \gamma_3*\sigma_3, \gamma_2*\sigma_1, \gamma_1*\sigma_1,\ldots,\gamma_3*\sigma_5.$$ In \S \ref{Monfordirectsumonecrivalue}  we use the notation $\delta_i^j$ to denote the vanishing cycle in the row $i$ and column $j$ of the Dynkin diagram \ref{dynkindeltaij}.   Thus, $\delta_{1}^j=\gamma_{2}*\sigma_{\rho(j)}, \hspace{4mm}
\delta_{2}^j=\gamma_{1}*\sigma_{\rho(j)}, \hspace{4mm}
\delta_{3}^j=\gamma_{3}*\sigma_{\rho(j)},$
where $\rho$ is the permutation $(1,2,3,4,5)\xto{\rho}(3,1,4,2,5)$. 

In order to compute the intersection matrix by using the function MonMatrix, it is enough to write \textit{Im=MonMatrix(m,p)}, where \textit{m} is a vector whose coordinate corresponds to $m(1)=d$ and $m(2)=e$. The parameter $p$ should be a integer number such that: If $p=0$, then $Im$ is the intersection matrix, else  $Im$ is the monodromy matrix. For the previous example, we get the matrix \ref{matrixM4},  by writing the lines
\begin{footnotesize}
\begin{verbatim}
m=[6,4];
Im=MonMatrix(m, 0)
\end{verbatim}
\end{footnotesize}
The function VanCycleSub computes the subspace spanned by the monodromy action of the fibration given by \ref{generalpolfibration}, acting on each vanishing cycle. In other words, this function compute the Krylov space of the monodromy matrix and each one of the the vectors of the basis previously described. That is by computing the eigenvalues and eigenvector of the monodromy matrix as in the proof of Proposition \ref{freevanishingcycles}. The usage of this function is as follows: \textit{[Dim, Wout,Vout]=VanCycleSub(m)}, where the vector $m$ represented again the degrees $d, e$. 

The output \textit{Dim} is the number of different eigenvalues of the monodromy matrix. Note that the  dimension of the homology group $H_n(f^{-1}(b))$, is $N=(d-1)(e-1)$. If $Dim=N$, then the array \textit{Wout} of size $N\times N\times N$, represents the Krylov space of each vanishing cycle. Thus, the columns of the matrix \textit{Wout(:,:,j)} are a basis of the  subspace generated by the monodromy action  on the vector $e_j=(0\hspace{1mm}\cdots \hspace{1mm}0\hspace{1mm}1\hspace{1mm}0\hspace{1mm}\cdots \hspace{1mm}0)$. The vector $e_j$ corresponds with the $j$-th joint cycle according to the previously defined order.

Finally, \textit{Vout} is a matrix where the $j$-th column  is a list of the vanishing cycles in the Krylov subspace of the vector $e_j$ with the monodromy matrix. Actually, this list is the position given by the order, associated to these vanishing cycles. Note that these vanishing cycles correspond to the rows of \textit{Wout(:,:,j)} with a single $1$ and zeros in the others. Continuing the example,
\begin{footnotesize}
	\begin{verbatim}
	m=[6,4];
	[Dim, Wout,Vout]=VanCycleSub(m)
	\end{verbatim}
\end{footnotesize}
In this case the second column of $Vout$ is the list $(2,5,8,11,14)$, which are the positions associated to the vanishing cycles $\delta^k_2$, with $k=1,\ldots,5$ (see Dynkin diagram \ref{dynkindeltaij}). The fifth column is the list $(5,11)$; it is because the vanishing cycle associated to the position 5th and 11th are $\delta^2_2$ and $\delta^4_2$, respectively, and $gcd(d,2)=2$ (see Proposition \ref{freevanishingcycles}).

\clearpage

\begin{footnotesize}
	\bibliographystyle{abbrv}
   \bibliography{References}

\begin{thebibliography}{10}

\bibitem{ACampo}
N.~A'Campo.
\newblock Le groupe de monodromie du d\'eploiement des singularit\'es isol\'ees
  de courbes planes i.
\newblock {\em Math. Ann.}, pages 1--32, 1975.

\bibitem{singularnold}
V.~I. Arnold, A.~N. Varchenko, and S.~Gusein-Zade.
\newblock {\em Singularities of Differentiable Maps: Volume II Monodromy and
  Asymptotic Integrals}, volume~83.
\newblock Springer Science \& Business Media, 1988.

\bibitem{Netoirreducible}
D.~Cerveau and A.~L. Neto.
\newblock Irreducible components of the space of holomorphic foliations of
  degree two in {$\mathbb{CP}(n), n\geq 3$}.
\newblock {\em Annals of mathematics}, pages 577--612, 1996.

\bibitem{ChMonodromy}
C.~Christopher and P.~Marde{\v{s}}i{\'c}.
\newblock The monodromy problem and the tangential center problem.
\newblock {\em Functional analysis and its applications}, 44(1):22--35, 2010.

\bibitem{idealsalgorithms}
D.~Cox, J.~Little, and D.~OShea.
\newblock {\em Ideals, varieties, and algorithms: an introduction to
  computational algebraic geometry and commutative algebra}.
\newblock Springer Science \& Business Media, 2013.

\bibitem{doranmorgan}
C.~Doran and J.~Morgan.
\newblock Mirror symmetry and integral variations of {Hodge} structure
  underlying one parameter families of {Calabi}-{Yau} threefolds.
\newblock {\em V, AMS/IP Studies in Advanced Mathematics}, 38, 2006.

\bibitem{Dulac}
H.~Dulac.
\newblock {\em D{\'e}termination et int{\'e}gration d'une certaine classe
  d'{\'e}quations diff{\'e}rentielles ayant pour point singulier un centre},
  volume~32.
\newblock Gauthier-Villars, 1908.

\bibitem{Fr}
J.~P. Fran{\c{c}}oise.
\newblock Successive derivatives of a first return map, application to the
  study of quadratic vector fields.
\newblock {\em Ergodic theory and Dynamical systems}, 16(1):87--96, 1996.

\bibitem{gavrilovpetrov}
L.~Gavrilov.
\newblock Petrov modules and zeros of {Abelian} integrals.
\newblock {\em Bulletin des sciences mathematiques}, 122(8):571--584, 1998.

\bibitem{GavrilovMovasati}
L.~Gavrilov and H.~Movasati.
\newblock The infinitesimal 16th {Hilbert} problem in dimension zero.
\newblock {\em Bulletin des sciences mathematiques}, 131, 2007.

\bibitem{Il}
Y.~Ilyashenko.
\newblock The origin of limit cycles under perturbation of the equation
  $dw/dz=-r_z/r_w$, where $r(z,w)$ is a polynomial.
\newblock {\em Matematicheskii Sbornik}, 120(3):360--373, 1969.

\bibitem{Lamotke}
K.~Lamotke.
\newblock {The topology of complex projective varieties after {S}.
  {Lefschetz}}.
\newblock {\em Topology}, 20(1):15--51, 1981.

\bibitem{DanielLopezquintic}
D.~L\'opez~G.
\newblock Homology supported in {Lagrangian} submanifolds in mirror quintic
  threefolds.
\newblock {\em Canadian Mathematical Bulletin}, 2020.

\bibitem{hosseinabelian}
H.~Movasati.
\newblock Abelian integrals in holomorphic foliations.
\newblock {\em Revista Matem{\'a}tica Iberoamericana}, 20(1):183--204, 2004.

\bibitem{hosseincenterlog}
H.~Movasati.
\newblock Center conditions: rigidity of logarithmic differential equations.
\newblock {\em Journal of Differential Equations}, 197(1):197--217, 2004.

\bibitem{hodgehossein}
H.~Movasati.
\newblock {\em A Course in {Hodge} theory, with emphasis on multiple
  integrals}.
\newblock
  http://w3.impa.br/\textasciitilde{}hossein/myarticles/hodgetheory.pdf. To be
  published by IP, Boston, 2017.

\bibitem{Netocomponentesbook}
A.~L. Neto.
\newblock {\em Componentes irredut{\'\i}veis dos espa{\c{c}}os de
  folhea{\c{c}}{\~o}es}.
\newblock Publica{\c{c}}oes Matematicas do IMPA, 2007.

\bibitem{netoFwithcenter}
A.~L. Neto.
\newblock Foliations with a morse center.
\newblock {\em J. Singul}, 9:82--100, 2014.

\bibitem{Ro}
R.~Roussarie.
\newblock {\em Bifurcation of planar vector fields and Hilbert's sixteenth
  problem}, volume 164.
\newblock Birkhauser, 1998.

\bibitem{eigenvaluestri}
W.~Yueh.
\newblock Eigenvalues of several tridiagonal matrices.
\newblock {\em Applied mathematics e-notes}, 5(66-74):210--230, 2005.

\bibitem{yadollahcenter}
Y.~Zare.
\newblock Center conditions: pull back of differential equations.
\newblock {\em Transactions of the American Mathematical Society}, 2017.

\end{thebibliography}
\end{footnotesize}

\bigskip

\sf{\noindent Daniel L\'opez Garcia\\
	Instituto de Matematica Pura e Aplicada (IMPA),  \\ 
	Estrada Dona Castorina 110, Rio de Janeiro, 22460-320, RJ, Brazil.\\
	daflopez@impa.br}

\end{document}